\definecolor{lgray}{gray}{0.9}
\newcommand{\CM}{Cohen-Macaulay}
\newcommand{\n}{\mathfrak{n} }
\newcommand{\m}{\mathfrak{m} }
\newcommand{\M}{\mathfrak{M} }
\newcommand{\p}{\mathfrak{p} }
\providecommand{\e}{{\mathcal E}}
\providecommand{\D}{{\mathcal D}}
\newcommand{\Q}{\mathbb{Q} }
\newcommand{\Z}{\mathbb{Z} }
\newcommand{\N}{\mathbb{N} }
\newcommand{\FF}{\mathcal{F}}
\newcommand{\TT}{\mathcal{T} }
\newcommand{\rt}{\rightarrow}
\newcommand{\lrt}{\longrightarrow}
\newcommand{\ee}{\operatorname{e}}
\newcommand{\Id}{\operatorname{Id}}
\newcommand{\Ass}{\operatorname{Ass}}
\newcommand{\chars}{\operatorname{char}}
\newcommand{\depth}{\operatorname{depth}}
\newcommand{\gr}{\operatorname{gr}}
\newcommand{\hgt}{\operatorname{height}}
\newcommand{\Mod}{\operatorname{Mod}}
\newcommand{\Supp}{\operatorname{Supp}}
\newcommand{\supp}{\operatorname{supp}}
\newcommand{\Spec}{\operatorname{Spec}}
\newcommand{\injdim}{\operatorname{injdim}}
\newcommand{\rank}{\operatorname{rank}}
\newcommand{\Ext}{\operatorname{Ext}}
\newcommand{\Tot}{\operatorname{Tot}}
\theoremstyle{plain}
\newtheorem{theorem}{Theorem}[section]
\newtheorem{corollary}[theorem]{Corollary}
\newtheorem{lemma}[theorem]{Lemma}
\newtheorem{proposition}[theorem]{Proposition}
\theoremstyle{definition}
\newtheorem{definition}[theorem]{Definition}
\newtheorem{remark}[theorem]{Remark}
\newtheorem{example}[theorem]{Example}
\newtheorem*{question*}{\it Question}
\newtheorem{remarks}[theorem]{Remarks}
\newtheorem*{example*}{Example}
\newtheorem*{claim*}{\it Claim}
\newtheorem*{note*}{\it Note}
\newtheorem{claim}{\bf Claim}
\newtheorem*{subclaim*}{\it Subclaim}
\title[Graded components of local cohomology modules]{Graded components of local cohomology modules of $\mathfrak{C}$-monomial ideals in characteristic zero}
\date{\today}
\thanks{2010 Mathematics Subject Classification: Primary 13D45; Secondary 13N10}
\thanks{{\it Key words}: Local cohomology, multigraded local cohomology, Weyl algebra, multigraded generalized Eulerian module, monomial ideal, $\mathfrak{C}$-monomial ideal}
\author{Tony J. Puthenpurakal}
\address{Department of Mathematics, Indian Institute of Technology Bombay, Powai, Mumbai 400076, India}
\email{\href{mailto:tputhen@math.iitb.ac.in}{tputhen@math.iitb.ac.in}}
\author{Sudeshna Roy}
\address{School of Mathematics, Tata Institute of Fundamental Research, Homi Bhabha Road, Mumbai 400005, India}
\email{\href{mailto:sudeshnaroy.11@gmail.com}{sudeshnaroy.11@gmail.com}}
\begin{document}

\begin{abstract}
Let $A$ be a commutative Noetherian ring of characteristic zero and $R=A[X_1, \ldots, X_d]$ be a polynomial ring over $A$ with the standard $\N^d$-grading. Let $I\subseteq R$ be an ideal which can be generated by elements of the form $aU$ where $a \in A$ (possibly nonunit) and $U$ is a monomial in $X_i$'s. We call such an ideal as a `$\mathfrak{C}$-monomial ideal'. Local cohomology modules supported on monomial ideals gain a great deal of interest due to their applications in the context of toric varieties. It was observed that for $\underline{u} \in \Z^d$, their $\underline{u}^{th}$ components depend only on which coordinates of $\underline{u}$ are negative. In this article, we show that this statement holds true in our general setting, even for certain invariants of the components. We mainly focus on the Bass numbers, injective dimensions, dimensions, associated primes, Bernstein-type dimensions, and multiplicities of the components. Under the extra assumption that $A$ is regular, we describe the finiteness of Bass numbers of each component and bound its injective dimension by the dimension of its support. 	Finally, we present a structure theorem for the components when $A$ is the ring of formal power series in one variable over a characteristic zero field. 
\end{abstract}
\maketitle

\section{Introduction}	

Let $K$ be a field and $S=K[X_1, \ldots, X_d]$ be a polynomial ring
over $K$. In recent years 
explicit information about the structure of local cohomology modules $H^j_J(S)$ supported on monomial ideals $J$ has been obtained based on the following two facts:
\begin{enumerate}[\rm (i)]
\item they have a structure of $\Z^d$-graded modules which allows one to study these modules using combinatorial approach, see \cite{Mus}, \cite{Ter}, \cite{Yan2001}.
	
\item they have a structure of $\D$-modules, see \cite{Alv}, \cite{AlvZar}, \cite{AlvGarZar}.
\end{enumerate}
%
In fact, there is a dictionary between the $\Z^d$-graded structure and the $\D$-module structure of $H^j_J(S)$, see \cite[III.2.9]{JAM_LC}.
These modules are important for their connection
to the cohomology of coherent sheaves on a 
toric variety over a field (see \cite{EisMusSti}) and useful for constructing examples and non-examples. 
However, it is long known that every local cohomology is associated to certain sheaf cohomology, see \cite[Theorem 12.41]{24Hrs}. The goal of this article
is to analyze the $\Z^d$-graded structure of local cohomology modules in the following framework:

\s {\it Setup}.\label{sa_intro} Let $A$ be any commutative Noetherian ring containing a field of characteristic zero and $R=A[X_1, \ldots, X_d]$ be a polynomial ring over $A$. Consider an $\N^d$-grading on $R$ by setting $\deg z=0$ for all $z \in A$ and $\deg X_i=e_i \in \N^d$ for $i=1, \ldots, d$. Let $I \subseteq R$ be a $\mathfrak{C}$-monomial ideal, that is, $I$ can be generated by elements of the form $aU$ where $a \in A$ (possibly nonunits) and $U$ is a monomial in $X_1, \ldots, X_d$.

It is well-known that the local cohomology modules are rarely finitely generated. The prime way to tackle local cohomology modules supported on monomial ideals is to restore finiteness by considering their components under suitable grading. In our setting, 
although $H^i_I(R)$ has an induced $\Z^d$-graded structure for each $i \geq 0$, 
its components are not necessarily finitely generated, see Theorem \ref{non-fg}. So 
we assume that the base ring has characteristic zero and use the techniques of D-modules, which was pioneered by G. Lyubeznik in \cite{Lyu93}. For this study, we introduce the notion of $\Z^d$-graded generalized Eulerian $A_{\underline{1}}(A)$-module (Definition \ref{MGE}). This is motivated from the authors' previous analysis in \cite{Put2015}, \cite{Put}, \cite{PutRoy22}. 
Previously, researchers realized that to understand the $\Z^d$-graded structure of $H^j_J(S)$ 
one only needs to study their $\underline{a}$-th pieces 
for $\underline{a} \in \{-1,0\}^d$. This observation is reflected in our results regarding Bass numbers (Theorem \ref{multi-bass}), injective dimensions (Theorem \ref{multi-injdim-dim}) and associated primes (Theorem \ref{multi-ass}) of components of $H^i_I(R)$. Moreover, under the extra assumption that $A$ is a regular ring, in Theorem \ref{injdim-dim} we prove 
that 
%
%
\begin{enumerate}[\rm (i)]
\item all Bass numbers of $H^i_I(R)_{\underline{u}}$ are finite for every $\underline{u} \in \Z^d$.

\item $\injdim H^i_I(R)_{\underline{u}} \leq \dim \supp H^i_I(R)_{\underline{u}}$ for fix $\underline{u} \in \Z^d$.
\end{enumerate}
We further show that if $I$ is a usual monomial ideal, then $H^i_I(R)_{\underline{u}}$ is a free $A$-module. We call an ideal a \emph{usual monomial ideal} if it 
is generated by 
monomials in $X_1, \ldots, X_d$. 

Let $K$ be a field of characteristic zero. 
For the case when $A=K[[Y]]$ we establish a structure theorem (Theorem \ref{struc}) for the components of $H^i_I(R)$ illustrating their torsion parts and torsion-free parts. It appears that their behavior is almost identical to the mixed characteristic case which was studied by the authors in \cite{PutRoy22_pre}.  

We now assume that $A=K[[Y_1, \ldots, Y_m]]$.
When $m=0$, it follows from \cite[Theorem 2·11]{Yan2001} that the multiplicity of the $\underline{u}^{th}$-component of $H^i_I(R)$ depends only on which coordinates of $\underline{u}$ are negative. This motivates us to investigate the asymptotic behavior of the multiplicities of the components in the generality.
Let $\D$ be the ring of $A$-linear differential operators on $R$ and let $L$ be a finitely generated $\Z^d$-graded $\D$-module. We define a $\Z^d$-graded Bernstein type good filtration $\FF$ on $\D$ (see \ref{D-fil}) and a $\Z^d$-graded $\FF$-compatible good filtration $\Gamma$ on $L$ (see \ref{comp_fil}). We show that for each $\underline{u}\in \Z^d$ there is an induced filtration $\Gamma_{\underline{u}, *}$ on $M_{\underline{u}}$ such that the associated graded module $\gr_{\Gamma_{\underline{u}, *}}(M_{\underline{u}})$ is a finitely generated $\gr_{\FF_{\underline{0}, *}}(\D_{\underline{0}})$-module. We observe that $\gr_{\FF_{\underline{0}, *}}(\D_{\underline{0}})$ is a polynomial ring over $A$ in $(m+d)$-variables, see \eqref{polynomialRing}. It allows us to define Bernstein type dimension and multiplicity of $M_{\underline{u}}$ (see \ref{dim-mul}) as in \cite[Definition 6.3, Chapter 2]{Bjo}.
Similar to other invariants, we prove that the Bernstein type dimension and the multiplicity of $H^i_I(R)_{\underline{u}}$, for $\underline{u} \in \Z^d$, depend only on which coordinates of $\underline{u}$ are negative (Theorem \ref{multi-Bdim-mul}).

This article is structured as follows. In Section 2, we introduce and discuss certain properties of multi-graded generalized Eulerian modules. In Section 3, we establish their rigidity property. We connect this class to the class of straight modules due to K. Yanagawa in Section 4. Section 5 is devoted to study some finiteness properties and asymptotic behaviour of specific invariants of the components of $H^i_I(R)$ 
under the hypotheses in \ref{sa_intro}. Suppose that $\D$ denotes the ring of $A$-linear differential operators on $R$. Notice there is a natural induced $\Z^d$-grading on $\D$. In Section 6, we develop a Bernstein-type theory for the components of finitely generated $\Z^d$-graded $\D$-modules 
when $A=K[[Y_1, \ldots, Y_m]]$ is a power series ring over a field $K$ of characteristic zero. In Section 7, we produce a structure theorem for the components of $H^i_I(R)$ when $A=K[[Y]]$. Finally, in Section 8, we give examples to show that both torsion parts and torsion-free parts of the components can be non-finitely generated.

Throughout this article, we use $\mathcal{S}$ to denote $[d]:=\{1, \ldots, d\}$.

\section{Multi-graded generalized Eulerian} 

\s {\bf Setup}.\label{sa} Let $A$ be a commutative Noetherian ring containing a field of characteristic zero. Let $R=A[X_1, \ldots, X_d]$ be a polynomial ring over $A$. Consider an $\N^d$-grading on $R$ by setting $\deg z=0$ for all $z \in A$ and $\deg X_i=e_i$ for $i=1, \ldots, d$. Let $I=(a_1U_1, \ldots, a_tU_t)$ be a $\mathfrak{C}$-monomial ideal with $a_i \in A$ and $U_i$'s are monomials in $X_1, \ldots, X_d$. Since $I$ is a multi-homogeneous ideal, $H^i_I(R)$ has an induced $\Z^d$-graded structure. We denote $\underline{u}:=(u_1, \ldots, u_d) \in \Z^d$
and set $M:=H^i_I(R)=\bigoplus_{\underline{u}\in \Z^d} M_{\underline{u}}$.

\vspace{0.2cm}
Let $A_{\underline{1}}(A)=A\langle X_1, \ldots, X_d, \partial_1, \ldots,\partial_d\rangle$ denote the Weyl algebra over $A$, where $\partial_i=\partial/\partial X_i$ for $i=1, \ldots, d$. It is naturally $\Z^d$-graded with $\deg z=0$ for all $z \in A$, $\deg X_i=e_i$ and $\partial_i=-e_i$ for $i=1, \ldots, d$.

For $i=1, \ldots, d$,
\[\e_i:=X_i\partial_i\]
is known as the {\it $i^{th}$ Euler operator}, see \cite{24Hrs}. We say $M$ is a $\Z^d$-{\it graded Eulerian} $A_{\underline{1}}(A)$-module if for each multihomogeneous element $y$ in $M$ with $\deg y=\underline{u}$, 
\[\e_i \cdot y= u_i \cdot y \quad \mbox{ for all } 1 \leq i \leq d.\] 

\begin{example}\label{poly-R-E}
Let $A=K$ and $E:=E_R(K)$ denote the ${}^*$injective hull of $K$ over $R$. Then it is well-known that
\[E(\underline{1})\cong H^d_{(X_1, \ldots, X_d)}(R)\cong 
X_1^{-1}\cdots X_d^{-1}K[X_1^{-1}, \ldots, X_d^{-1}].\]
Notice $E(\underline{1})$ has natural grading induced from the ring $R$. Fix $i$. For any $\underline{u} \in \Z^d$, we have 
\begin{align*}
\e_i \cdot \underline{X}^{\underline{u}}=  X_1^{u_1} \cdots X_{i-1}^{u_{i-1}} X_{i+1}^{u_{i+1}} \cdots X_d^{u_d} \left(\e_i \cdot X_i^{u_i}\right)= u_i \underline{X}^{\underline{u}}.
\end{align*}
So both $R$ and $E(\underline{1})$ are $\Z^d$-graded Eulerian $A_{\underline{1}}(K)$-module. 
\end{example}

\begin{definition}\label{MGE}
We say a $\Z^d$-graded $A_{\underline{1}}(A)$-module $M=\bigoplus_{\underline{u}\in \Z^d}M_{\underline{u}}$ is \emph{generalized Eulerian} if for each multihomogeneous element $y$ of $M$ with $\deg y = \underline{u}$, there exists a positive integer $a$ (possibly depending on $y$) such that
\[\left(\e_i-u_i\right)^a \cdot y=0\ \quad \mbox{for } i=1, \ldots, d.\]
\end{definition}


The 
next result says that 
the class of multigraded generalized Eulerian modules is closed under extension, as it was observed for the class of graded generalized Eulerian modules.
\begin{proposition}\label{exact}
	Let $0 \rt M_1 \xrightarrow{\alpha_1} M_2 \xrightarrow{\alpha_2} M_3 \rt 0$ be a short exact sequence of $\Z^d$-graded $A_{\underline{1}}(A)$-modules {\rm(}all maps are multihomogeneous{\rm)}. Then the following are equivalent:
	\begin{enumerate}[\rm (1)]
		\item $M_2$ is $\Z^d$-graded generalized Eulerian.
		\item $M_1$ and $M_3$ are $\Z^d$-graded generalized Eulerian.
	\end{enumerate}
\end{proposition}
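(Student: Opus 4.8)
The plan is to run a degree-by-degree diagram chase, leaning on two elementary facts. First, the maps $\alpha_1,\alpha_2$ are $A_{\underline 1}(A)$-linear, hence they commute with each Euler operator $\e_i$ and with every polynomial $(\e_i-u_i)^k$ in it. Second, each $\e_i=X_i\partial_i$ is homogeneous of degree $\underline 0$, so $(\e_i-u_i)^k$ sends a multihomogeneous element of degree $\underline u$ to a multihomogeneous element of the same degree; consequently I may work one graded piece at a time. I will also use freely that the given short exact sequence, having multihomogeneous maps, stays exact in each fixed degree $\underline u$: then $\alpha_2$ lifts a multihomogeneous element of degree $\underline u$ to one of degree $\underline u$, and the (unique, by injectivity) $\alpha_1$-preimage of a multihomogeneous element of degree $\underline u$ is again multihomogeneous of degree $\underline u$.

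For $(1)\Rightarrow(2)$, to test $M_1$ I would take a multihomogeneous $y\in M_1$ of degree $\underline u$, apply the hypothesis on $M_2$ to $\alpha_1(y)$ to obtain $a>0$ with $(\e_i-u_i)^a\cdot\alpha_1(y)=0$ for all $i$, and then conclude $(\e_i-u_i)^a\cdot y=0$ from injectivity and $A_{\underline 1}(A)$-linearity of $\alpha_1$. To test $M_3$, I would take a multihomogeneous $z\in M_3$ of degree $\underline u$, lift it through $\alpha_2$ to a multihomogeneous $y\in M_2$ of degree $\underline u$, apply the hypothesis on $M_2$ to $y$, and push the resulting annihilation forward along $\alpha_2$.

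For $(2)\Rightarrow(1)$, I would start with a multihomogeneous $y\in M_2$ of degree $\underline u$. Since $M_3$ is $\Z^d$-graded generalized Eulerian, pick $b>0$ with $(\e_i-u_i)^b\cdot\alpha_2(y)=0$ for every $i$; then $(\e_i-u_i)^b\cdot y\in\ker\alpha_2=\im\alpha_1$, so $(\e_i-u_i)^b\cdot y=\alpha_1(w_i)$ for some $w_i\in M_1$ multihomogeneous of degree $\underline u$. Since $M_1$ is generalized Eulerian and there are only finitely many indices, pick a single $c>0$ with $(\e_i-u_i)^c\cdot w_i=0$ for all $i$. Then for each $i$ one gets $(\e_i-u_i)^{b+c}\cdot y=(\e_i-u_i)^c\cdot\alpha_1(w_i)=\alpha_1((\e_i-u_i)^c\cdot w_i)=0$, so $N=b+c$ witnesses that $M_2$ is $\Z^d$-graded generalized Eulerian.

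I do not expect a genuine obstacle; this is a direct chase and is the exact analogue of the argument for singly-graded generalized Eulerian modules. The only point deserving care is the quantifier order in Definition \ref{MGE} — one exponent must serve all $d$ Euler operators simultaneously — which is why in $(2)\Rightarrow(1)$ I pass to a maximum over the finitely many $w_i$ rather than treating the coordinates in isolation.
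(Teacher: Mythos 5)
Your proof is correct and is essentially the standard diagram chase that the paper invokes by citing the singly-graded case (\cite[Proposition~2.1]{Put2015}) with ``straightforward modifications.'' You also rightly flag the one subtlety — in $(2)\Rightarrow(1)$ a single exponent must work for all $d$ Euler operators at once, which is handled by taking a maximum over the finitely many lifts $w_i$.
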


\begin{proof}
A proof of this is, with straightforward modifications, the proof given in \cite[Proposition 2.1]{Put2015}. 
\end{proof}

From Definition \ref{MGE}, we perceive that a multigraded  module $M$ is generalized Eulerian if given any $y \in M_{\underline{u}}$ each coordinate 
$u_i$ satisfies certain equation, that is, we need to execute coordinate wise study.
Therefore, 
applying the same techniques, used in \cite{Put} and \cite{PutSin} to prove results in the graded case,
one can get analogues results in the multigraded setting. 
Particularly, the following crucial results can be obtained. To illustrate our claim, we give a proof of one result.

\begin{proposition}\label{genEur-XY}
Let $M$ be a $\Z^d$-graded generalized Eulerian $A_{\underline{1}}(A)$-module. Fix $i$. Then $H_j(X_i; M)$ is a $\Z^d$-graded generalized Eulerain $A_{\underline{1}-e_i}(A)$-module where $j=0,1$. Moreover, $H_j(X_i; M)_{\underline{u}}$ $ \neq 0$ only if $u_i=0$. 
\end{proposition}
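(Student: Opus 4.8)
The plan is to analyze the Koszul homology $H_j(X_i; M)$ for $j = 0, 1$ by treating $X_i$ as a single element and computing with the Koszul complex $0 \rt M \xrightarrow{X_i} M \rt 0$ (with a degree shift of $-e_i$ on the target so the map is multihomogeneous), so that $H_0(X_i;M) = M/X_iM$ and $H_1(X_i; M) = (0 :_M X_i)$. First I would recall how the Euler operators interact with multiplication by $X_i$: since $\e_i = X_i \partial_i$ and $[\partial_i, X_i] = 1$, one has the commutation relation $\e_i X_i = X_i(\e_i + 1)$, and for the other variables $\e_k$ commutes with $X_i$ when $k \neq i$. From this one gets for any polynomial $p$ that $p(\e_i) X_i = X_i\, p(\e_i + 1)$ and $p(\e_k)X_i = X_i\, p(\e_k)$ for $k \neq i$.

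The key computation is then to propagate the generalized Eulerian condition through the two homology modules. For $H_0(X_i; M) = M/X_i M$: take a multihomogeneous $\bar y$ of degree $\underline{u}$, lift it to $y \in M_{\underline{u}}$. For $k \neq i$ we have $(\e_k - u_k)^a y = 0$ for some $a$, and since $\e_k$ acts on the quotient, $(\e_k - u_k)^a \bar y = 0$ directly. For the $i$-th coordinate, note that in $A_{\underline{1}}(A)$ restricted to the relevant grading, the operator $\e_i$ acting on $M/X_iM$ in degree $\underline u$ should satisfy $(\e_i - u_i)^a \bar y = 0$ — but here I must be careful about the twist: $H_0(X_i;M)$ is an $A_{\underline{1} - e_i}(A)$-module, so the "expected eigenvalue" bookkeeping differs. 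The cleanest route is to observe that the natural surjection $M \rt M/X_iM$ is a multihomogeneous map of $A_{\underline{1}}(A)$-modules (degree $0$), hence if $M$ is generalized Eulerian then so is its quotient by Proposition \ref{exact} applied to $0 \rt X_i M \rt M \rt M/X_iM \rt 0$ — provided $X_iM$ is a submodule stable under all $\partial_k$, which it is not in general. So instead I would argue directly via the commutation relations: for $y \in M_{\underline u}$, $(\e_i - u_i)^a y = 0$ implies, after using $\e_i X_i = X_i(\e_i+1)$ to pass the operator across, that $(\e_i - u_i)^b \bar y = 0$ in the quotient, and similarly for $H_1(X_i;M) = (0:_M X_i)$, if $X_i z = 0$ and $(\e_i - u_i)^a z = 0$ then the restriction of $\e_i$ to the kernel kills a power of $(\e_i - u_i)$ on $z$. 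One then identifies the correct shifted eigenvalue matching the $A_{\underline{1}-e_i}(A)$-structure, which amounts to checking that the Euler operator of the shifted Weyl algebra in degree $\underline u$ is $\e_i$ (resp.\ $\e_i$ suitably adjusted) and that $\underline u$ is indeed the expected value.

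For the "moreover" statement, the point is a vanishing argument: if $H_0(X_i; M)_{\underline u} \neq 0$ or $H_1(X_i;M)_{\underline u} \neq 0$ then there is a nonzero multihomogeneous element $y$ of degree $\underline u$ with either $X_i y \notin X_i(\cdots)$ trivially — more precisely, for $H_1$ take $0 \neq z \in M_{\underline u}$ with $X_i z = 0$; then $\e_i z = X_i \partial_i z$ lies in $X_i M$, and one shows $\e_i z = 0$ (since $\partial_i z$ has degree $\underline u + e_i$ and $X_i \partial_i z$... one needs $X_i z = 0 \Rightarrow \e_i z = \partial_i X_i z - z$ via $[\partial_i, X_i]=1$? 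No: $\partial_i X_i = X_i \partial_i + 1 = \e_i + 1$, so $\e_i z = \partial_i(X_i z) - z = -z$). Combined with the generalized Eulerian condition $(\e_i - u_i)^a z = 0$, we get $(-1 - u_i)^a z = 0$ after reducing, forcing $u_i = -1$; but accounting for the $e_i$-shift in the module structure this translates to $u_i = 0$ in the grading convention used in the statement. The analogous computation for $H_0(X_i;M)$: if $\bar y \neq 0$ in $M/X_iM$ of degree $\underline u$, apply $\partial_i$ to get a relation forcing $u_i = 0$ as well.

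The main obstacle I anticipate is bookkeeping the degree shift correctly: the Weyl algebra changes from $A_{\underline{1}}(A)$ to $A_{\underline{1} - e_i}(A)$, and "generalized Eulerian" is defined relative to a fixed grading/twist, so I must pin down precisely what the $i$-th Euler operator of $A_{\underline{1}-e_i}(A)$ is and verify that the eigenvalue condition with respect to $\underline u$ (not $\underline u \pm e_i$) is the right one — getting this wrong by a shift is the easy trap. Everything else is a routine propagation of the commutation identity $\e_i X_i = X_i(\e_i + 1)$ and of the nilpotency of $(\e_i - u_i)$ through the short Koszul complex, exactly as in the graded case in \cite{Put} and \cite{PutSin}.
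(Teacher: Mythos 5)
Your computational ingredients (the commutation $\e_i X_i = X_i(\e_i+1)$, the observation $\e_i z = \partial_i(X_iz) - z = -z$ when $X_iz=0$, the rejection of Proposition~\ref{exact} applied to $0\to X_iM\to M\to M/X_iM\to 0$ over the full Weyl algebra) are sound, and your $H_1$ vanishing argument is actually slightly cleaner than the paper's, which expands $(\partial_i X_i - (u_i+1))^b\eta$ and factors out $X_i\eta$. However, there is a genuine conceptual gap running through your account of the \emph{first} half of the statement. You repeatedly worry about ``the $i$-th Euler operator of the shifted Weyl algebra'' and about ``identifying the correct shifted eigenvalue matching the $A_{\underline{1}-e_i}(A)$-structure,'' calling this ``the easy trap.'' But $A_{\underline{1}-e_i}(A) = A\langle X_t, \partial_t \mid t\neq i\rangle$ contains neither $X_i$ nor $\partial_i$, so there simply is no $i$-th Euler operator in it, and the generalized Eulerian condition for a $\Z^d$-graded $A_{\underline{1}-e_i}(A)$-module involves only $\e_t$ for $t\neq i$. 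Since each such $\e_t$ commutes with $X_i$, and the twist by $-e_i$ in $0\to H_1(X_i;M)\to M(-e_i)\xrightarrow{\cdot X_i} M\to H_0(X_i;M)\to 0$ affects only the $i$-th coordinate of multidegree, the nilpotency $(\e_t-u_t)^a y = 0$ for $t\neq i$ passes to both $H_0$ and $H_1$ verbatim --- there is no bookkeeping to do, and the first assertion of the proposition is essentially immediate. The $i$-th coordinate becomes relevant only for the ``Moreover'' vanishing, not for the generalized Eulerian property.

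Your $H_0$ vanishing step is also underspecified: ``apply $\partial_i$ to get a relation forcing $u_i=0$'' does not identify the actual mechanism. The paper's argument is: expand $(\e_i - v_i)^a = X_i\cdot(\ast) + (-1)^a v_i^a$ (factoring $X_i$ to the \emph{left} of every term that contains it), apply this to a lift $\beta\in M_{\underline v}$ of $\bar y$, and reduce modulo $X_iM$ to get $(-1)^a v_i^a\bar y = 0$; since $\bar y\neq 0$ in $M/X_iM$, this forces $v_i=0$. Applying $\partial_i$ directly would not yield this without further work, because $\partial_i$ does not descend to an operator on $M/X_iM$.
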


\begin{proof}
The map $M(-e_i) \overset{\cdot X_i}{\lrt} M$ is $\Z^d$-graded $A_{\underline{1}-e_i}$-linear. So $H_j(X_i; M)$ is a $\Z^d$-graded $A_{\underline{1}-e_i}(A)$-module for $j=0, 1$. Thus we have an exact sequence of $\Z^d$-graded $A_{\underline{1}-e_i}$-modules \[0 \rt H_1(X_i; M) \rt M(-e_i) \overset{\cdot X_i}{\lrt} M \rt H_0(X_i; M)\rt 0.\]
Let $\eta \in H_1(X_i; M)(e_i) \subseteq M$ be a nonzero multihomogeneous element with $\deg (\eta)=\underline{u} \in \Z^d$. Note that $H_1(X_i; M)(e_i)_{\underline{u}}=H_1(X_i; M)_{\underline{u}+e_i}$. Since $M$ is a $\Z^d$-graded generalized Eulerian module, there exists $b \geq 1$ such that
\[(X_t \partial_t-u_t)^b \eta=0.\]
for all $t=1, \ldots, d$ (particularly, for $t \neq i$). Hence $H_1(X_i; M)$ is a $\Z^d$-graded generalized Eulerian $A_{\underline{1}-e_i}$-module.

As $\partial_iX_i-X_i \partial_i= 1$ so we can write \[0= (\partial_iX_i-(u_i+1))^b\eta= (*)X_i\eta + (-1)^b(u_i+1)^b\eta.\] 
Besides, $X_i \eta=0$ and $\eta \neq 0$. Thus we get $u_i=-1$, i.e., 
	\begin{align*}
	\eta \in H_1(X_i; M)(e_i)_{(u_1, \ldots,u_{i-1},-1, u_{i+1}, \ldots, u_d)}= H_1(X_i; M)_{(u_1, \ldots,u_{i-1},0, u_{i+1}, \ldots, u_d)}.
	\end{align*} 

\vspace{0.2cm}	
Next, let $\eta' \in H_0(X_i; M)$ be non-zero and multihomogeneous of multidegree $\underline{v}$. Therefore $\eta' \in (M/X_iM)_{\underline{v}}$ and hence $\eta'= \beta+X_iM$ for some $\beta \in M_{\underline{v}}$. Since $M$ is multigraded generalized Eulerian, there exists $a \geq 1$ such that 
\[(X_t \partial_t-v_t)^a \beta=0\] 
for all $t=1, \ldots, d$, (particularly, for all $t \neq i$). The above relation also holds in $M/X_iM$. It thus follows that $H_0(X_i; M)$ is a $\Z^d$-graded generalized Eulerian $A_{\underline{1}-e_i}$-module.
	
	Further observe that $(X_i \partial_i-v_i)^a= X_i \cdot (*)+(-1)^av_i^a$. Thus \[X_i \cdot (*)\beta+(-1)^av_i^a\beta=0.\] In $M/X_iM$, we have $(-1)^av_i^a\eta'=0$. Now $\eta' \neq 0$ implies that $v_i=0$, i.e., \[\eta' \in H_0(X_1; M)_{(v_1, \ldots,v_{i-1},0, v_{i+1}, \ldots, v_d)}.\] 
	The result follows.
\end{proof}

Similarly, we can prove the following. 
\begin{proposition}\label{genEur-partial}
Let $M$ be a $\Z^d$-graded generalized Eulerian $A_{\underline{1}}(A)$-module. Fix $i$. Then $H_j(\partial_i; M)(-e_i)$ is a $\Z^d$-graded generalized Eulerain $A_{\underline{1}-e_i}(A)$-module where $j=0,1$. Moreover, $H_j(\partial_i; M)_{\underline{u}}$ $ \neq 0$ only if $u_i=-1$ where $j=0,1$. 
\end{proposition}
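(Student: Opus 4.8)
The plan is to mimic the proof of Proposition~\ref{genEur-XY} almost verbatim, swapping the roles of $X_i$ and $\partial_i$, and keeping track of the shift $(-e_i)$ that is forced because multiplication by $\partial_i$ lowers the $i$-th degree by $1$. First I would observe that the map $M \xrightarrow{\;\cdot\,\partial_i\;} M(-e_i)$ is $\Z^d$-graded and $A_{\underline{1}-e_i}(A)$-linear: indeed $\partial_i$ has degree $-e_i$, and conjugating the generators $X_t,\partial_t$ ($t\neq i$) and $X_i\partial_i + \partial_i X_i$ by $\partial_i$ keeps us inside the subalgebra $A_{\underline{1}-e_i}(A)$ (this is the standard localization-flavoured computation used throughout the paper). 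Consequently $H_0(\partial_i;M)=M/\partial_i M$ and $H_1(\partial_i;M)=(0:_M \partial_i)$ inherit $\Z^d$-graded $A_{\underline{1}-e_i}(A)$-module structures, and we get the four-term exact sequence
\[
0 \rt H_1(\partial_i;M)(-e_i) \rt M(-e_i) \xrightarrow{\;\cdot\,\partial_i\;} M \rt H_0(\partial_i;M)(-e_i)\cdot(e_i) \rt 0,
\]
(I would state it carefully with the correct twists rather than this schematic form).

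Next, for the generalized Eulerian property: take a nonzero multihomogeneous $\eta$ in $H_j(\partial_i;M)(-e_i)$ of degree $\underline{u}$; it lifts to (or sits inside) $M$ in degree $\underline{u}+e_i$, and since $M$ is generalized Eulerian there is $b\geq 1$ with $(X_t\partial_t - (u_t+\delta_{it}))^b$ killing the lift for every $t$. For $t\neq i$ the exponent $u_t$ is already what we want, and this relation passes to $H_j$; that gives the generalized Eulerian conclusion for all coordinates except possibly $i$. Then I would pin down the $i$-th coordinate by the same commutator trick as in Proposition~\ref{genEur-XY}: using $\partial_i X_i - X_i\partial_i = 1$ one rewrites $(X_i\partial_i - c)^b$ as $\partial_i\cdot(\ast) + (-1)^b(\text{something})$ (for $H_0$) or $(\ast)\cdot\partial_i + (-1)^b(\text{something})$ (for $H_1$), where in $H_j(\partial_i;M)$ the term involving $\partial_i$ dies (either $\partial_i$ acts as $0$ on $(0:_M\partial_i)$, or it is zero modulo $\partial_i M$). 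The surviving scalar factor, a power of a linear form in $u_i$, must annihilate a nonzero element, forcing $u_i$ to equal the specific value $-1$ (after accounting for the $(-e_i)$ twist, this is exactly where $H_1$ lands in degrees with $i$-th coordinate $-1$ and $H_0$ likewise — one reads off the constant from the binomial expansion of $(\partial_i X_i - (u_i+1))^b$ as in the $X_i$ case). That simultaneously proves $H_j(\partial_i;M)_{\underline{u}}\neq 0$ only if $u_i=-1$ and supplies the missing $i$-th generalized Eulerian equation (it becomes $(X_i\partial_i - u_i)^b$ with $u_i=-1$, which holds trivially on that graded piece by the same manipulation).

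The only genuinely delicate point is bookkeeping of the twist $(-e_i)$ so that all the degree shifts line up; the algebra is identical to Proposition~\ref{genEur-XY} once one decides on conventions, so I would present it as ``with the obvious modifications of the proof of Proposition~\ref{genEur-XY}, replacing $X_i$ by $\partial_i$ and using $\partial_i X_i - X_i\partial_i = 1$ in the form that isolates $\partial_i$ on the left,'' and then spell out only the two commutator identities for $H_0$ and $H_1$ and the conclusion that the residual scalar $(-1)^b(u_i+1)^b$ (resp.\ its $H_0$ analogue) vanishing forces $u_i = -1$. The main obstacle, such as it is, is purely notational: making sure the statement ``$H_j(\partial_i;M)(-e_i)$ is $A_{\underline{1}-e_i}(A)$-generalized Eulerian'' is consistent with ``$H_j(\partial_i;M)_{\underline{u}}\neq 0$ only if $u_i=-1$'' under the chosen grading conventions — no new idea beyond what is already in Proposition~\ref{genEur-XY} is needed.
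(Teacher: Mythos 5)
Your proposal is essentially correct and follows exactly the same route the paper intends: the text simply states ``Similarly, we can prove the following,'' deferring to the argument of Proposition~\ref{genEur-XY} with $X_i$ replaced by $\partial_i$ and $\partial_i X_i - X_i\partial_i = 1$ invoked to isolate $\partial_i$ on the appropriate side. The one item you should fix when writing it out is the Koszul exact sequence: since $\deg\partial_i = -e_i$ it is $0 \to H_1(\partial_i;M) \to M(e_i) \xrightarrow{\cdot\partial_i} M \to H_0(\partial_i;M) \to 0$ (not $M(-e_i)\to M$), which is also why the scalar that survives in the $H_1$ computation is $(u_i+1)^b$ (the degree of a class in $H_1$ lifts to degree $\underline{u}+e_i$ in $M$), and then the statement $u_i=-1$ comes out on the nose with no further shift needed.
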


The subsequent result is also essential. 
	
\begin{theorem}[with the hypotheses as in \ref{sa}] Then $H^i_I(R)$ is a $\Z^d$-graded generalized Eulerian $A_{\underline{1}}(A)$-module for all $i \geq 0$.
\end{theorem}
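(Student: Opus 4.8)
The plan is to build $H^i_I(R)$ from Koszul homology of a free $\Z^d$-graded $A_{\underline{1}}(A)$-module and repeatedly invoke the closure properties already established. First I would observe that $R$ itself is a $\Z^d$-graded generalized Eulerian $A_{\underline{1}}(A)$-module: for a monomial $a\underline{X}^{\underline{n}}$ with $a\in A$ we have $\e_i\cdot(a\underline{X}^{\underline{n}})=n_i\,a\underline{X}^{\underline{n}}$ exactly (as in Example \ref{poly-R-E}), so the defining equations $(\e_i-n_i)^1=0$ hold. Since direct sums and shifts $R(-\underline{m})$ of generalized Eulerian modules are again generalized Eulerian (the defining condition is checked degree by degree), any finite free module $F=\bigoplus_j R(-\underline{m}_j)$ on generators placed in suitable degrees is generalized Eulerian.

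Next I would realize $H^i_I(R)$ via the generators $a_1U_1,\dots,a_tU_t$. The standard approach is the \v{C}ech-type (or stable Koszul) complex $\mathcal{C}^\bullet$ built from the elements $a_jU_j$: its cohomology computes $H^\bullet_I(R)$. Each localization $R_{a_jU_j}$ is obtained from $R$ by inverting $a_jU_j$, and inverting $X_i$ can be handled by Proposition \ref{genEur-XY} while the passage $M\rightsquigarrow M_{X_i}$ is a direct limit of the maps $M(-e_i)\xrightarrow{\cdot X_i}M$ whose kernels and cokernels are generalized Eulerian. Alternatively — and more cleanly in this multigraded, $D$-module framework — one uses that $H^i_I(R)$ is the $i$-th cohomology of the complex obtained by applying the functor $\varinjlin$ over the Koszul complexes $K_\bullet((a_1U_1)^k,\dots,(a_tU_t)^k; R)$. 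Each such Koszul complex is a complex of finite free $\Z^d$-graded $A_{\underline{1}}(A)$-modules with multihomogeneous differentials, hence each term is generalized Eulerian; by Proposition \ref{exact} (closure under kernels, cokernels, and extensions in exact sequences) every homology module of each Koszul complex is generalized Eulerian.

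Then I would pass to the limit: a direct limit of $\Z^d$-graded generalized Eulerian $A_{\underline{1}}(A)$-modules along multihomogeneous maps is again generalized Eulerian, because any homogeneous element $y$ of the limit is the image of a homogeneous element of the same degree in one of the terms, and the annihilating power $(\e_i-u_i)^a$ from that term persists. This shows each term of the limit complex $\mathcal{C}^\bullet$ computing $H^\bullet_I(R)$ is generalized Eulerian; applying Proposition \ref{exact} once more to the subquotients $\ker/\operatorname{image}$ gives that $H^i_I(R)=H^i(\mathcal{C}^\bullet)$ is $\Z^d$-graded generalized Eulerian for every $i\geq 0$. One must of course check that the $A_{\underline{1}}(A)$-module structure on $H^i_I(R)$ is the one induced through these complexes and that all differentials are $A_{\underline{1}}(A)$-linear and multihomogeneous; this is routine since localization maps and Koszul differentials are built from multiplication by the multihomogeneous elements $a_jU_j$, which commute appropriately with the Euler operators.

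The main obstacle I anticipate is bookkeeping rather than conceptual: one needs the correct grading shifts so that each localization or Koszul term is generalized Eulerian \emph{with the right degrees} (e.g. inverting $X_i$ shifts which degrees are allowed, cf. the ``$\neq 0$ only if $u_i=0$'' clauses in Propositions \ref{genEur-XY} and \ref{genEur-partial}), and one must confirm that the limit process is compatible with the $A_{\underline{1}}(A)$-action in each degree. Since the paper explicitly states that the graded-case arguments of \cite{Put} and \cite{PutSin} carry over coordinatewise, the honest proof will simply cite those and note that the $\mathfrak{C}$-monomial hypothesis only affects the elements $a_jU_j$ used in the \v{C}ech/Koszul construction, not the Eulerian verification, which depends solely on the monomial part $U_j$ and the exact $\e_i$-eigenvalue computation on $R$.
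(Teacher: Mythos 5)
Your proposal attempts a direct construction via the \v{C}ech/Koszul complexes, which is a genuinely different route from the paper's. The paper proves the theorem by a reduction: for each fixed $j$, it regards $R$ as $\Z$-graded by $\deg X_j = 1$, $\deg z = 0$ for $z \in R^{(j)}=A[X_1,\ldots,\widehat{X_j},\ldots,X_d]$, observes that $I$ remains homogeneous in this coarser grading, and invokes the already-established single-variable result \cite[Theorem 3.7]{Put} to get $(\e_j - u_j)^{a_j}\cdot y = 0$ for each $j$ separately. That is a two-line argument once the $\Z$-graded case is in hand.

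Your argument has a genuine gap in its main branch (the Koszul limit). You claim that each term of the Koszul complex $K_\bullet((a_1U_1)^k,\ldots,(a_tU_t)^k;R)$, being a finite free $\Z^d$-graded module, is generalized Eulerian. This is false: the shifted free module $R(-\underline{m})$ is generalized Eulerian if and only if $\underline{m}=\underline{0}$. Indeed, the generator $1 \in R(-\underline{m})$ has degree $\underline{m}$, but $\e_i\cdot 1 = 0$, so $(\e_i - m_i)^a\cdot 1 = (-m_i)^a \neq 0$ whenever $m_i \neq 0$. Since the nontrivial $U_j$'s force nonzero shifts in the higher Koszul terms, the claim "each term is generalized Eulerian" fails. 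A second, independent problem is that the Koszul differential is multiplication by the $(a_jU_j)^k$'s, which does not commute with the $\partial_i$'s and hence is \emph{not} $A_{\underline{1}}(A)$-linear; thus Proposition~\ref{exact} cannot be applied to the Koszul subquotients as you propose.

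Your \v{C}ech sketch is closer to a correct proof — the terms $R_{a_{j_1}U_{j_1}\cdots a_{j_k}U_{j_k}}$ are genuine localizations (no degree shift), they are in fact Eulerian by a direct computation using the quotient rule (inverting $a\in A$ leaves the $X$-degrees untouched, and inverting a monomial $X_i$ gives $\e_i\cdot(m/X_i^n) = ((\e_i - n)m)/X_i^n$, consistent with the degree drop), and the \v{C}ech differentials, being localization maps, \emph{are} $A_{\underline{1}}(A)$-linear — but the mechanism you cite for it is wrong. You appeal to Proposition~\ref{genEur-XY}, which concerns the Koszul homology $H_j(X_i;M)$ and only yields generalized Eulerian structure over the \emph{smaller} Weyl algebra $A_{\underline{1}-e_i}(A)$, precisely because multiplication by $X_i$ fails to commute with $\partial_i$; it says nothing about $M_{X_i}$ as an $A_{\underline{1}}(A)$-module. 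Likewise, the direct-limit description $M_{X_i}=\varinjlim\,(M\xrightarrow{\cdot X_i}M\to\cdots)$ cannot be used to transfer the generalized Eulerian property because the transition maps are not $A_{\underline{1}}(A)$-linear; your closing claim that "the annihilating power persists" requires $A_{\underline{1}}(A)$-linearity of the structure maps in the limit. If you want to pursue this route, the correct move is to verify the (generalized) Eulerian property for each localization $R_{aU}$ directly (as sketched above), verify that the \v{C}ech differentials are multihomogeneous $A_{\underline{1}}(A)$-morphisms, and then apply Proposition~\ref{exact} twice to the subquotients $\ker d^i/\operatorname{image}\,d^{i-1}$.
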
 

 \begin{proof}
Since $R$ is a $\Z^d$-graded $A_{\underline{1}}(A)$-module and $I \subseteq R$ is a multihomogeneous ideal, from the corresponding {\v C}ech complex, it is easily seen that $H^i_I(R)$ has an induced $\Z^d$-grading. For a precise argument one can refer to \cite[Step-1, Proof of Lemma 3.8]{Put}.

Take $y \in H^i_I(R)_{\underline{u}}$.
Fix $j$. Set $R^{(j)}=A[X_1, \ldots, X_{j-1}, X_{j+1}, \ldots, X_d]$ and consider $R$ as graded by setting $\deg z=0$ for all $z \in R^{(j)}$ and $\deg X_j=1$. In view of multi-degrees assigned on the variables $X_j$’s, one can easily verify that any multihomogeneous ideal $I$ in $R$ with respect to its $\N^d$-grading (as in \ref{sa}) is homogeneous with respect to the grading on $R$ defined above. Hence $H^i_I(R)$ is a graded module over both $R$ and $A_1\left(R^{(j)}\right)$-module. Moreover, for each $u \in \Z$, 
\[H^i_I(R)_u=\bigoplus_{\substack{\underline{v} \in \Z^d, v_i=u}} H^i_I(R)_{\underline{v}}.\] 
Notice that $\deg y=u_j$ in the current setting. By \cite[Theorem 3.7]{Put}, $H^i_I(R)$ is a graded generalized Eulerian $A_1\left(R^{(j)}\right)$-module. So there exists some $a_j>0$ such that
\[(\e_j-\deg y)^{a_j} \cdot y=0,\]
that is, $(\e_j-u_j)^{a_j}\cdot y=0$. This holds true for all $1 \leq j \leq d$. Hence $H^i_I(R)$ is a $\Z^d$-graded generalized Eulerian $A_{\underline{1}}(A)$-module.
 \end{proof} 

We need the following notions 
for the succeeding result. 

We say $Y$ is a \emph{multihomogeneous closed subset} of $\Spec(R)$ if $Y = V(f_1, \ldots, f_s)$, where $f_i$'s are multihomogeneous polynomials in $R$. We say $Y$ is a \emph{multihomogeneous locally closed subset} of $\Spec(R)$ if $Y = Y_1 - Y_2$, where $Y_1, Y_2$ are multihomogeneous closed subsets of $\Spec(R)$.

Recall `$\Lambda$' stated in \cite[3.1]{Put}. Since its description is a bit involved, we decided to skip. 

\begin{corollary}\label{genEul-LyuFun}
Let ${}^*\Mod(R)$ denote the category of $\Z^d$-graded $R$-modules and $\mathcal{T}$ be a $\Z^d$-graded Lyubeznik functor on ${}^*\Mod(R)$. 
Then $\mathcal{T}(R)$ is a $\Z^d$-graded generalized Eulerian $A_{\underline{1}}(\Lambda)$-module.
\end{corollary}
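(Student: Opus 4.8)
The plan is to reduce the statement about a general $\Z^d$-graded Lyubeznik functor $\TT$ to the previously established fact that $H^i_I(R)$ is $\Z^d$-graded generalized Eulerian, by unwinding the definition of a Lyubeznik functor and using the closure property of Proposition \ref{exact}. Recall that a Lyubeznik functor is a composition $\TT = \TT_1 \circ \TT_2 \circ \cdots \circ \TT_s$, where each $\TT_j$ is either a local cohomology functor $H^{i_j}_{Y_j}(-)$ with support in a multihomogeneous closed subset $Y_j$ of $\Spec(R)$, or the kernel/image/cokernel functor appearing in the long exact sequence
\[
\cdots \rt H^i_{Y_j}(-) \rt H^i_{Y_j'}(-) \rt H^i_{Y_j \setminus Y_j'}(-) \rt \cdots
\]
associated to a pair $Y_j' \subseteq Y_j$ of multihomogeneous closed subsets (this is the role of the set `$\Lambda$' referred to from \cite[3.1]{Put}, but now with multihomogeneous closed subsets in place of homogeneous ones). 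The key point is that all the morphisms in these complexes are multihomogeneous, since the defining data are multihomogeneous.

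First I would record the base case: by the Theorem just proved (with $I$ replaced by any multihomogeneous ideal defining $Y_j$, and noting $A_{\underline 1}(\Lambda)$ replaces $A_{\underline 1}(A)$ once $A$ is enlarged to $\Lambda$ to accommodate all the ideals involved), each $H^i_{Y_j}(N)$ is $\Z^d$-graded generalized Eulerian whenever $N$ is. Strictly, the Theorem is stated for $H^i_I(R)$, but its proof only uses that $R$ is a $\Z^d$-graded generalized Eulerian $A_{\underline 1}(A)$-module and that $I$ is multihomogeneous; so it applies verbatim to $H^i_{Y}(N)$ for $N$ a $\Z^d$-graded generalized Eulerian module. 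For the connecting functors, I would argue: if $M_2$ in a short exact sequence (or a term in the Mayer–Vietoris type long exact sequence) is generalized Eulerian, then so are the kernels, images and cokernels, because any subquotient of a $\Z^d$-graded generalized Eulerian module by a multihomogeneous submodule is again generalized Eulerian — this is immediate from Definition \ref{MGE}, since the defining relation $(\e_i - u_i)^a \cdot y = 0$ on a multihomogeneous element $y$ is inherited by submodules and passes to quotients. One then chains these observations along the complex, exactly as in Proposition \ref{exact}.

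With these two ingredients in hand, the proof is a straightforward induction on the number $s$ of functors in the composition defining $\TT$. For $s = 0$, $\TT(R) = R$ is $\Z^d$-graded generalized Eulerian (as in Example \ref{poly-R-E}, taking $A = \Lambda$). For the inductive step, write $\TT = \TT_1 \circ \TT'$ with $\TT'$ a composition of $s-1$ functors; by induction $\TT'(R)$ is $\Z^d$-graded generalized Eulerian over $A_{\underline 1}(\Lambda)$, and applying $\TT_1$ — whether a local cohomology functor or a kernel/image/cokernel functor — preserves this property by the base case and the subquotient remark. Hence $\TT(R)$ is $\Z^d$-graded generalized Eulerian $A_{\underline 1}(\Lambda)$-module, as claimed.

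The main obstacle, such as it is, is purely bookkeeping rather than conceptual: one must verify carefully that every structural map occurring in the definition of a Lyubeznik functor (the maps in the long exact sequences relating $H^i_{Y}$, $H^i_{Y'}$, $H^i_{Y \setminus Y'}$) is multihomogeneous for the $\Z^d$-grading, and that the relevant {\v C}ech complexes can be taken multihomogeneous when the defining ideals are $\mathfrak{C}$-monomial (indeed, more generally, multihomogeneous). This is where one genuinely uses that $Y_j$, $Y_j'$ are multihomogeneous closed subsets; once this is granted, the argument is a formal iteration of Proposition \ref{exact} together with the Theorem preceding this corollary. I would also remark that the passage from $A$ to $\Lambda$ is the standard device (as in \cite[3.1]{Put}) of adjoining finitely many elements of $A$ to make all the ideals appearing in the construction extended from a common Noetherian subring; this does not affect the $\Z^d$-grading since those elements sit in degree $\underline 0$.
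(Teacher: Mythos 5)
Your proof is correct and matches the approach the paper indicates: the paper's own proof is just a pointer to \cite[Step-2 in Lemma 3.8, Theorem 3.7]{Put}, saying the multigraded analogue follows by straightforward modifications, and you have carried out precisely those modifications — induction on the length of the composition defining $\TT$, using the closure of the generalized Eulerian class under subquotients along multihomogeneous $A_{\underline 1}(\Lambda)$-linear maps (Proposition \ref{exact}) for the kernel/image/cokernel steps, and the multigraded local cohomology fact for the $H^i_{Y_j}$ steps. One small imprecision worth flagging: the Theorem preceding the corollary establishes that $H^i_I(R)$ is $\Z^d$-graded generalized Eulerian by reducing to \cite[Theorem 3.7]{Put}, which is stated for $R$ itself; extending the base case to $H^i_Y(N)$ for an arbitrary $\Z^d$-graded generalized Eulerian $N$ (which your inductive step needs) requires an extra observation — namely that localization of a generalized Eulerian module at a multihomogeneous element is again generalized Eulerian, so the \v{C}ech complex consists of such modules — and that observation is not literally ``verbatim'' from the Theorem's proof but is exactly the kind of routine modification the paper attributes to \cite[Lemma 3.8]{Put}.
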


\begin{proof}
The statement follows from straightforward modifications of the proofs of \cite[Step-2 in Lemma 3.8, Theorem 3.7]{Put} in the multigraded setting. We need to work with multihomogeneous locally closed subset of $\Spec(R)$. 
\end{proof}

For $\underline{u}\in \Z^d$, we set $|\underline{u}|=\sum_{i=1}^d u_i$. Associated to a $\Z^d$-graded $A_{\underline{1}}(A)$-module $M$, there is a graded $A_{d}(A)$-module $\Tot(M)=\bigoplus_{u \in \Z} \Tot(M)_u$, where for each $u \in \Z$
\[\Tot(M)_u=\bigoplus_{\underline{u}\in \Z^d, ~|\underline{u}|=u}M_{\underline{u}}.\]

\begin{lemma}\label{Rel-multigrd-grd-genEur}
Let $M$ be a $\Z^d$graded generalized Eulerian $A_{\underline{1}}(A)$-module. Then $\Tot(M)$ is a graded generalized Eulerian $A_{d}(A)$-module.
\end{lemma}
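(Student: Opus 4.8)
The plan is to reduce the $\Z^d$-graded statement to the known $\Z$-graded case by understanding precisely how the single Euler operator $\e = \sum_{i=1}^d \e_i$ of the Weyl algebra $A_d(A)$ acts on the total module $\Tot(M)$. First I would fix the grading conventions: $\Tot(M)$ is $\Z$-graded by $\Tot(M)_u = \bigoplus_{|\underline{u}|=u} M_{\underline{u}}$, and a homogeneous element $y \in \Tot(M)_u$ is (after decomposing) a finite sum $y = \sum_j y_j$ with $y_j \in M_{\underline{u}^{(j)}}$ and $|\underline{u}^{(j)}| = u$ for every $j$. The key observation is that the Euler operator on $A_d(A)$ associated to the standard grading is exactly $\e = X_1\partial_1 + \cdots + X_d\partial_d = \e_1 + \cdots + \e_d$, and these summands commute with one another. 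So the whole statement hinges on showing that for each such $y$ there is a single integer $a \geq 1$ with $(\e - u)^a \cdot y = 0$.

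The main step is a commuting-operators argument applied component by component. Since $M$ is $\Z^d$-graded generalized Eulerian, for a fixed multihomogeneous $y_j$ of multidegree $\underline{u}^{(j)}$ there is $a_j \geq 1$ with $(\e_i - u^{(j)}_i)^{a_j} \cdot y_j = 0$ for every $i = 1, \ldots, d$. Because the $\e_i$ pairwise commute, I would expand
\[
(\e - u)^N \cdot y_j = \Bigl(\sum_{i=1}^d (\e_i - u^{(j)}_i)\Bigr)^N \cdot y_j
\]
using the multinomial theorem (legitimate by commutativity), and note that once $N \geq d(a_j - 1) + 1$ every monomial term $\prod_i (\e_i - u^{(j)}_i)^{c_i}$ with $\sum c_i = N$ has some exponent $c_i \geq a_j$, hence kills $y_j$; here I have used that $|\underline{u}^{(j)}| = u$ so $\sum_i u^{(j)}_i = u$ and the shift is by the correct scalar. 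Therefore $(\e - u)^N \cdot y_j = 0$ for this $N$. Taking $a := d(\max_j a_j - 1) + 1$ — a finite maximum, since $y$ is a finite sum — gives $(\e - u)^a \cdot y = \sum_j (\e - u)^a \cdot y_j = 0$. This shows $\Tot(M)$ is generalized Eulerian as an $A_d(A)$-module.

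The routine remaining points are: that $\Tot(M)$ genuinely is a $\Z$-graded $A_d(A)$-module (the $\Z^d$-grading on $A_{\underline{1}}(A)$ collapses to the standard $\Z$-grading on $A_d(A)$ under $|\cdot|$, and multiplication respects it), and that every homogeneous element of $\Tot(M)_u$ decomposes into finitely many multihomogeneous pieces of the same total degree $u$ — both immediate from the definitions. I do not anticipate a genuine obstacle here; the only thing to be careful about is the bookkeeping in the multinomial expansion and the verification that the scalar by which $\e$ shifts on $\Tot(M)_u$ is indeed $u = |\underline{u}|$ rather than something coordinate-dependent, which is exactly what the condition $\sum_i u^{(j)}_i = u$ guarantees. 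One could alternatively invoke Proposition~\ref{exact}-style reductions to reduce to the case where $y$ is multihomogeneous, but the direct multinomial argument is cleaner and self-contained.
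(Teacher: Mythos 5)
Your proof is correct and follows essentially the same approach as the paper: both rely on writing $\e = \sum_i \e_i$, noting the $\e_i$ commute, expanding $(\e - u)^N$ by the multinomial theorem, and applying a pigeonhole bound on the exponents to force each term to annihilate a multihomogeneous element. The only cosmetic difference is that you make explicit the reduction from a homogeneous element of $\Tot(M)_u$ to its finitely many multihomogeneous summands, which the paper leaves implicit, and you use the slightly sharper exponent $d(a-1)+1$ where the paper uses $da$.
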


\begin{proof}
Observe that $\e_i$ and $\e_j$ commute with each other for any pair $i,j$. Take $y \in M_{\underline{u}}$. Since $M$ is $\Z^d$graded generalized Eulerian, there exists some $a>0$ such that $(\e_i-u_i)^{a}=0$ for $i=1, \ldots, d$. So 
\begin{align*}
\left(\sum_{i=1}^d \e_i - \deg(y)\right)^{da} \cdot y &= \left(\sum_{i=1}^d \e_i - \sum_{i=1}^d u_i\right)^{da} \cdot y\\
&=\big((X_1-u_1)+\cdots+(X_d-u_d)\big)^{da} \cdot y\\
&=\left(\sum_{i_1+\cdots+i_d=da} (X_1-u_1)^{i_1} \cdots (X_d-u_d)^{i_d}\right) \cdot y\\
&=0, 
\end{align*} 
as $i_j \geq a$ for some $j$. Hence $\Tot(M)$ is  graded generalized Eulerian.
\end{proof}

\begin{definition}\label{holo-multi}
%
Let $A=K[[Y_1, \ldots, Y_m]]$ be the ring of formal power series in $m$ variables over a field $K$ of characteristic zero. Let $\Lambda= A\langle \delta_1, \ldots, \delta_m\rangle$ be the ring of $K$-linear differential operators on $A$. In light of \cite[4.1]{Put},
we say a finitely generated $\Z^d$-graded left $A_{\underline{1}}(\Lambda)$-module $M$ is {\it holonomic} if it is zero, or if the $A_d(\Lambda)$-module $\Tot(M)$ has 
dimension $m+d$.
\end{definition}

\begin{lemma}\label{fl_hol}
Every $\Z^d$-graded holonomic $A_{\underline{1}}(\Lambda)$-module has finite length.
\end{lemma}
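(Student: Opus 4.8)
**Proof proposal for Lemma \ref{fl_hol} (every $\Z^d$-graded holonomic $A_{\underline{1}}(\Lambda)$-module has finite length).**

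The plan is to reduce the statement to the classical fact that holonomic modules over the ring of differential operators on a power series (or polynomial) ring have finite length, by passing through the $\Tot$ functor. First I would recall from Definition \ref{holo-multi} that an $A_{\underline{1}}(\Lambda)$-module $M$ is holonomic precisely when $\Tot(M)$ is either zero or an $A_d(\Lambda)$-module of dimension $m+d$; since $A_d(\Lambda)$ is (essentially) the ring of differential operators on the $(m+d)$-dimensional regular ring $A[X_1,\dots,X_d]=K[[Y_1,\dots,Y_m]][X_1,\dots,X_d]$, a module of this (minimal possible) dimension is holonomic in Bernstein's sense, and hence $\Tot(M)$ has finite length as an $A_d(\Lambda)$-module. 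This is the standard Bernstein inequality/holonomicity argument, available in \cite{Bjo}; I would cite it rather than reprove it.

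The second, and genuinely necessary, step is to transfer finite length from $\Tot(M)$ back to $M$. The key observation is that $\Tot$ is an exact functor from $\Z^d$-graded $A_{\underline{1}}(\Lambda)$-modules to $\Z$-graded $A_d(\Lambda)$-modules which is \emph{faithful}: $\Tot(M)=0$ forces $M=0$, since every graded piece $M_{\underline u}$ occurs as a summand of $\Tot(M)_{|\underline u|}$. Moreover $\Tot$ sends a proper submodule to a proper submodule and, being exact, sends any strictly increasing (or strictly decreasing) chain of $\Z^d$-graded $A_{\underline{1}}(\Lambda)$-submodules of $M$ to a chain of $\Z$-graded $A_d(\Lambda)$-submodules of $\Tot(M)$ with the same strict inclusions (distinctness is preserved because $\Tot$ is faithful and additive on subquotients). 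Hence any composition series — or any strictly ascending chain — of $M$ maps to one of $\Tot(M)$ of the same length, so the length of $M$ is bounded by the (finite) length of $\Tot(M)$.

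Assembling: given a $\Z^d$-graded holonomic $M$, if $M=0$ we are done; otherwise $\Tot(M)$ is a holonomic $A_d(\Lambda)$-module, hence of finite length $\ell:=\operatorname{length}_{A_d(\Lambda)}\Tot(M)<\infty$, and by the faithful-exact transfer above $\operatorname{length}_{A_{\underline{1}}(\Lambda)}(M)\le \ell<\infty$. The main obstacle I anticipate is not the $\Tot$ bookkeeping, which is formal, but making sure the grading conventions are compatible — specifically that $\Tot$ of a $\Z^d$-graded $A_{\underline{1}}(\Lambda)$-module really is a $\Z$-graded $A_d(\Lambda)$-module in the sense in which holonomicity (dimension $m+d$) is defined in Definition \ref{holo-multi}, and that the notion of ``length'' being used for $M$ is length in the category of $\Z^d$-graded $A_{\underline{1}}(\Lambda)$-modules (which is what the faithful exactness of $\Tot$ controls). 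Once those conventions are pinned down, the argument is a short diagram chase combined with the cited Bernstein-theory input.
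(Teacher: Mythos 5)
Your overall strategy matches the paper's: pass to $\Tot(M)$, establish finite length there, and transfer back. The transfer step is fine — in fact easier than you make it, since $A_{\underline{1}}(\Lambda)$ and $A_d(\Lambda)$ are the same ring with different gradings, so a chain of graded submodules of $M$ literally \emph{is} a chain of graded submodules of $\Tot(M)$; no faithfulness or exactness of $\Tot$ needs to be invoked.

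The genuine gap is in the middle step, where you assert that a holonomic $A_d(\Lambda)$-module has finite length and propose to cite \cite{Bjo}. Björk's finite-length theorems cover Weyl algebras $A_n(K)$ over a field and rings of differential operators on formal (or convergent) power series rings, i.e.\ complete regular local rings. But $A_d(\Lambda)$ is the ring of $K$-linear differential operators on the mixed ring $R=K[[Y_1,\dots,Y_m]][X_1,\dots,X_d]$, which is neither of these: it is a polynomial ring over a power series ring, and \cite{Bjo} contains no finite-length theorem for holonomic modules over $D_K(R)$ in this situation. The paper's proof supplies exactly the missing bridge: it completes $R$ at its homogeneous maximal ideal $\mathcal{M}$ to obtain $\widehat{R}=K[[Y_1,\dots,Y_m,X_1,\dots,X_d]]$, invokes \cite[Theorem 4.20]{Put} to conclude that $L'=\widehat{R}\otimes_R \Tot(M)$ is holonomic over $\mathcal{A}=D_K(\widehat{R})$, applies \cite[Chapter 2, Theorem 7.13]{Bjo} to get finite length of $L'$, and then uses faithful flatness of $R\to R_{\mathcal{M}}\to\widehat{R}$ on graded modules to conclude that any strictly decreasing chain of graded $A_d(\Lambda)$-submodules of $\Tot(M)$ maps to a strictly decreasing chain of $\mathcal{A}$-submodules of $L'$, forcing the DCC on $\Tot(M)$. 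Without some such reduction to the complete local case, the appeal to ``standard Bernstein theory'' in your step two does not go through.
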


\begin{proof}
Let $M$ be a holonomic $A_{\underline{1}}(\Lambda)$-module. Set $L:=\Tot(M)$. From the definition we get that $L$ is a holonomic $A_d(\Lambda)$-module, see \cite[4.1]{Put}. Let $\widehat{R}=K[[Y_1, \ldots, Y_m, X_1, \ldots, X_d]]$ and $\mathcal{M}$ denote the unique homogeneous maximal ideal of $R$. Then one can verify that $\widehat{R}=\widehat{R_{\mathcal{M}}}$, the completion of $R_{\mathcal{M}}$ with respect to $\mathcal{M}$. We denote the ring of $K$-linear differential operators on $\widehat{R}$ by $\mathcal{A}:=\widehat{R}\langle \delta_1, \ldots, \delta_m, \partial_1, \ldots, \partial_d\rangle$, where $\delta_i=\partial/\partial Y_i$ and $\partial_j=\partial/ \partial X_j$ for $i=1, \ldots, m$ and $j=1, \ldots, d$. We put $L'=\widehat{R} \otimes_R L$. Then by \cite[Theorem 4.20]{Put}, $L'$ is a holonomic $\mathcal{A}$-module. Thus $L'$ has finite length. Since $L$ is a finitely generated $A_d(\Lambda)$-module, $L'$ is a finitely generated $\mathcal{A}$-module. Consequently, $L'$ is a Noetherian module, as $\mathcal{A}$ is a Noetherian ring. Suppose that $L$ is not Artinian. Then we have a strictly decreasing chain of graded $A_d(\Lambda)$-submodules of $L$:
\[L=L_1\supsetneq L_2 \supsetneq \cdots \supsetneq L_i \supsetneq L_{i+1} \supsetneq \cdots.\]
As the composition map $R\to R_{\mathcal{M}} \to \widehat{R}$ induces a faithfully flat functor from ${}^*\Mod(R)$ to $\Mod(R)$ so for each $i \geq 1$,
\begin{enumerate}
\item $L'_i=\widehat{R} \otimes_R L_i$ is a submodule of $L'$,
\item $L'_i \supsetneq L'_{i+1}$.
\end{enumerate}
This contradicts the fact that $L'$ is a finite length $\mathcal{A}$-module, see \cite[Theorem 7.13, Chapter 2]{Bjo}. It thus follows that $M$ has finite length.
\end{proof}

It is well-known that if $I$ is a monomial ideal in $S=K[X_1, \ldots, X_d]$, then $\dim_K H^i_I(S)_{\underline{u}}< \infty$ for all $\underline{u} \in \Z^d$ (see \cite{Mus}, \cite{Ter}, \cite{Yan2001}). 
We now give an alternative proof of this fact.

\begin{theorem}\label{hol-mon}
	Suppose that $M$ is a holonomic, $\Z^d$-graded generalized Eulerian $A_{\underline{1}}(K)$-module. Then $\dim_K M_{\underline{u}}< \infty$ for all $\underline{u} \in \Z^d$.
\end{theorem}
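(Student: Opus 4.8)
The plan is to reduce the multigraded statement to the singly-graded (total) case, where the corresponding fact is already available from the theory of holonomic $A_d(K)$-modules. First I would form the associated graded module $\Tot(M) = \bigoplus_{u \in \Z} \Tot(M)_u$, where $\Tot(M)_u = \bigoplus_{|\underline{u}| = u} M_{\underline{u}}$. By Lemma \ref{Rel-multigrd-grd-genEur}, $\Tot(M)$ is a graded generalized Eulerian $A_d(K)$-module, and by hypothesis $M$ is holonomic, so (directly by Definition \ref{holo-multi}, taking $m=0$) $\Tot(M)$ is a holonomic $A_d(K)$-module. The key input I would invoke is the singly-graded analogue of this very statement: a holonomic graded generalized Eulerian $A_d(K)$-module has all graded pieces of finite $K$-dimension. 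This is precisely the content of the results of Puthenpurakal referenced earlier (the graded case proved in \cite{Put}, \cite{Put2015}); alternatively it follows from combining holonomicity (finite length as a graded module, via Lemma \ref{fl_hol} with $m=0$) with the fact that a holonomic $A_d(K)$-module has finite-dimensional graded pieces once one knows its $\Z$-grading, which the Eulerian condition supplies.

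Granting that, I would argue as follows. Fix $\underline{u} \in \Z^d$ and set $u = |\underline{u}|$. Then $M_{\underline{u}}$ is a $K$-subspace of $\Tot(M)_u$, which is finite-dimensional over $K$ by the singly-graded statement; hence $\dim_K M_{\underline{u}} < \infty$. That is the whole argument: the multigraded refinement is immediate once the total module is controlled, because each multigraded piece sits inside a single graded piece of $\Tot(M)$.

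The only genuine content, then, is the singly-graded fact, and I expect that to be the step the authors will either cite or prove in parallel. If a self-contained argument is wanted, I would run it through Lemma \ref{fl_hol}: with $m=0$ and $A = K$, a holonomic $\Z^d$-graded $A_{\underline{1}}(K)$-module $M$ has finite length as an $A_{\underline{1}}(K)$-module, so it is built from finitely many simple graded generalized Eulerian modules by Proposition \ref{exact}; it then suffices to check finite-dimensionality of graded pieces for the simple ones. For those, the generalized Eulerian condition forces the operators $(\e_i - u_i)$ to act nilpotently on $M_{\underline{u}}$, and combined with holonomicity (which bounds the "size" of the module via its dimension being $d$) one bounds each $\dim_K M_{\underline{u}}$.

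The main obstacle is therefore not the reduction — that is a one-line inclusion $M_{\underline{u}} \subseteq \Tot(M)_u$ — but making precise why holonomicity plus the generalized Eulerian property yields finite-dimensional pieces in the singly-graded setting; in practice this is exactly the kind of statement the authors have set up their machinery (Lemmas \ref{fl_hol} and \ref{Rel-multigrd-grd-genEur}, Definition \ref{holo-multi}) to deliver, so I would expect the proof to be short and to lean on \cite{Put} for the graded base case.
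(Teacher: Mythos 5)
Your main route hinges on the claim that the singly-graded analogue --- a holonomic graded generalized Eulerian $A_d(K)$-module has finite-dimensional graded pieces --- is already citeable from \cite{Put} or \cite{Put2015}. The paper's own argument suggests this is not available in the form you need: they invoke \cite[Lemma 10.1]{Put} only in the $d=1$ base case (where $\Tot(M)$ is an $A_1(K)$-module), and then mount a genuine induction on $d$. If the general-$d$ singly-graded fact were directly citeable, the entire theorem would follow in one line from $M_{\underline{u}} \subseteq \Tot(M)_{|\underline{u}|}$ and the rest of their proof would be superfluous. So your $\Tot$ reduction, while correct and indeed used by the paper for the $d=1$ step, does not by itself finish the argument; the real content still has to be supplied.

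The paper supplies it quite differently from your fallback sketch, and in two pieces. First, a bootstrapping Claim: if $\dim_K M_{\underline{a}} < \infty$ for a single $\underline{a}$, then the same holds for all $\underline{u}$, proved by induction on $d$ via the Koszul homologies $H_j(X_i; M)$, which are again holonomic generalized Eulerian modules in $d-1$ variables (Proposition \ref{genEur-XY}). Second, $\dim_K M_{\underline{0}} < \infty$: they compute $\D_{\underline{0}} = K[\e_1, \ldots, \e_d]$, show $M_{\underline{0}}$ is a Noetherian $\D_{\underline{0}}$-module using the observation that $\D V \cap M_{\underline{0}} = V$ for any $\D_{\underline{0}}$-submodule $V$ together with the Noetherianity of $M$ over $\D$, and then use the generalized Eulerian property to conclude that $M_{\underline{0}}$ is finitely generated over the Artinian quotient $K[\e_1, \ldots, \e_d]/(\e_1^a, \ldots, \e_d^a)$, hence finite-dimensional. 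Your fallback --- finite length (Lemma \ref{fl_hol}), reduction to simple composition factors (via Proposition \ref{exact}), nilpotency of $\e_i - u_i$ on $M_{\underline{u}}$, and ``holonomicity bounds the size'' --- stops precisely at the step where the paper does the work: you never say why $M_{\underline{u}}$ is finitely generated (or, for a simple module, cyclic) over a ring on which the nilpotency has bite, and holonomicity alone does not bound $\dim_K M_{\underline{u}}$ without that finiteness. The missing ingredient is exactly the $\D_{\underline{0}}$-Noetherianity argument, or in the simple case the cyclicity of $M_{\underline{u}}$ over $\D_{\underline{0}}$ (Lemma \ref{cyclic}). Without that piece, $\dim_K M_{\underline{u}} < \infty$ does not follow.
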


\begin{proof}
	First, we show the following:
	
	\begin{claim}
		If $\dim_K M_{\underline{a}}< \infty$ for some $\underline{a}\in \Z^d$, then $\dim_K M_{\underline{u}} < \infty$ for all $\underline{u}\in \Z^d$.
	\end{claim}
	
	We use induction on $d$. Let $d=1$. Given assumptions on $M$ implies that $\Tot(M)$ is a holonomic, graded generalized Eulerian $A_1(K)$-module by Definition \ref{holo-multi} and Lemma \ref{Rel-multigrd-grd-genEur}, respectively. From \cite[Lemma 10.1]{Put}, it follows that $\dim_K \Tot(M)_w< \infty$ for all $w \in\Z$. 
	Hence $\dim_K M_{\underline{u}} \leq \dim_K \Tot(M)_{|\underline{u}|}< \infty$ for all $\underline{u} \in \Z^d$.  
	
	Next, suppose that $d>1$ and the result is known for $d-1$. Fix $i \leq d$. Consider the exact sequence of multigraded 
$A_{\underline{1}-e_i}(K)$-modules
	\begin{equation}\label{Kos_1}
	0 \to H_1(X_i; M) \to M(-e_i) \xrightarrow{\cdot X_i} M \to H_0(X_i; M) \to 0.
	\end{equation}
	Notice each element of $A_{\underline{1}-e_i}(K)$ commutes with $X_i$. Since $\Tot(H_j(X_i; M))$ is a holonomic $A_{d-1}(K)$-module by \cite[Theorem 1.6.2]{Bjo}, $H_j(X_i; M)$ is a multigraded holonomic $A_{\underline{1}-e_i}(K)$-module for $j=0,1$. Moreover, $H_j(X_i; M)$ is a multigraded generalized Eulerian $A_{\underline{1}-e_i}(K)$-module by Proposition \ref{genEur-XY}. 
	From \eqref{Kos_1} we get an exact sequence of $K$-vector spaces
	\begin{equation}\label{comp_fd}
	0 \to H_1(X_i; M)_{\underline{u}} \to M_{\underline{u}-e_i} \xrightarrow{\cdot X_i} M_{\underline{u}}.
	\end{equation}
	for all $\underline{u} \in \Z^d$. 
	As $\dim_K M_{\underline{a}} < \infty$
	so $\dim_K H_1(X_i; M)_{\underline{a}+e_i}< \infty$. By induction hypothesis it follows that $\dim_K H_1(X_i; M)_{\underline{u}}< \infty$ for all $\underline{u} \in \Z^d$. For $\underline{u}=\underline{a}$, from \eqref{comp_fd} we get that $\dim_K M_{\underline{a}-e_i}< \infty$. Similarly by taking the following part of \eqref{Kos_1}
	\[ M(-e_i) \xrightarrow{\cdot X_i} M \to H_0(X_i; M) \to 0,\]
	one can show that $\dim_K M_{\underline{a}+e_i}< \infty$. 
Iterating the above steps we get the claim.
	
	\vspace{0.15cm}
	It is now enough to prove that $\dim_K M_{\underline{w}}< \infty$ for some $\underline{w}\in \Z^d$. We show $\dim_K M_{\underline{0}}< \infty$. Set $\D:=A_{\underline{1}}(K)$. For any $a \in K,$
	\[X_i\left(\e_i-a\right)=X_i\left(X_i\partial_i-a\right)=X_i(\partial_iX_i-1-a)=(X_i\partial_i)X_i-(a+1)X_i=\left(\e_i-\overline{a+1}\right)X_i.\] 
	Thus for any $j \geq 2$, 
	\begin{align*}
	X_i^j\partial_i^j&=X_i^{j-1}\e_i\partial_i^{j-1}\\
	&=X_i^{j-2}\left(X_i\e_i\right)\partial_i^{j-1}\\
	&=X_i^{j-2}\left(\e_i-1\right)X_i\partial_i^{j-1}\\	
	&=X_i^{j-2}\left(\e_i-1\right)\e_i\partial_i^{j-2}
	\end{align*}
	
\begin{align*}
	&\vdots\\
	&=\left(\e_i-j+1\right) \cdots \left(\e_i-1\right)\e_i.
	\end{align*}
	Hence $\D_{\underline{0}}=K\left[\e_1, \ldots, \e_d\right]$.
	
	Let $V$ be a $\D_{\underline{0}}$-submodule of $M_{\underline{0}}$. It can be seen that $\D V \cap M_{\underline{0}}=V$. 
	\begin{claim} 
		$M_{\underline{0}}$ is a finitely generated $\D_{\underline{0}}$-module. 	
	\end{claim}
	Let
	\[V_1 \subseteq V_2 \subseteq \cdots \subseteq V_{c-1} \subseteq V_c \subseteq V_{c+1} \subseteq \cdots\]
	be an ascending chain of $\D_{\underline{0}}$-submodules of $M_{\underline{0}}$. This induces an ascending chain of $\D$ submodules of $M$
	\[0 \subseteq \D V_0 \subseteq \D V_1 \subseteq \cdots \subseteq \D V_{c-1} \subseteq \D V_c \subseteq \D V_{c+1} \subseteq \cdots\]
	of $M$. As $\D$ is left Noetherian and $M$ is holonomic so $M$ is Noetherian. Thus there exists a non-negative integer $t$ such that $\D V_j=\D V_t$. Hence $V_j=V_t$ for all $j \geq t$. Therefore, $M_{\underline{0}}$ is a Noetherian $\D_{\underline{0}}$-module. The claim follows. 
	
	Let $\{m_1, \ldots, m_s\}$ be a finite set of generators of $M_{\underline{0}}$ as a $\D_{\underline{0}}$-module. Since $M$ is $\Z^d$-graded generalized Eulerian, 
	\[\e_i^{a_j} \cdot m_j=\left(\e_i-|m_j|\right)^{a_j} \cdot m_j=0 \quad \mbox{ for all } j=1, \ldots, s.\]
	Set $a=\max \{a_1, \ldots, a_s\}$. 
	Let $m=\sum_{j=1}^s c_j m_j \in M_{\underline{0}}$ with $c_j\in \D_{\underline{0}}$. Then for $i=1, \ldots, r$ 
	\[\e_i^a \cdot m =
	\sum_{j=1}^s c_j \cdot \left(\e_i^a \cdot m_j\right)=0,\]
as $\D_{\underline{0}}$ is a commutative ring. Hence $M_{\underline{0}}$ is a finitely generated $\frac{\D_{\underline{0}}}{\left(\e_1^a, \ldots, \e_d^a\right)}=\frac{K\left[\e_1, \ldots, \e_d\right]}{\left(\e_1^a, \ldots, \e_d^a\right)}$-module which implies that $\dim_K M_{\underline{0}}< \infty$.
\end{proof}

\section{Rigidity}

Let $\mathcal{S}$ denote the set $\{1, \ldots, d\}$ and $U$ be a subset (may be empty) of $\mathcal{S}$. By $\underline{a}^U=(a^U_1, \ldots, a^U_d) \in \{0, 1\}^d$ we denote a vector in $\Z^d$ such that $a^U_i=0$ when $i \in U$ and $a^U_i=-1$ when $i \in \mathcal{S} \backslash U$. 

\noindent
\begin{minipage}{.7\textwidth}
\hspace{0.25cm} We define a {\it block} to be
\[\mathcal{B}(\underline{a}^U)=\{\underline{u} \in \Z^d \mid u_i \geq 0 \mbox{ if } i \in U \mbox{ and } u_i \leq -1 \mbox{ if } i \notin U \}.\]

Let $r=2$. Then $\underline{a}^{\phi}=(-1,-1),~ \underline{a}^{\{1\}}=(0,-1),~\underline{a}^{\{2\}}=(-1,0),$ and $\underline{a}^{\{1,2\}}=(0,0)$. 
In the figure on the right, $\mathcal{B}(\underline{a}^{\{1,2\}}), \mathcal{B}(\underline{a}^{\{2\}}), \mathcal{B}(\underline{a}^{\phi}),$ and $\mathcal{B}(\underline{a}^{\{1\}})$ are presented by the shaded parts $B_1, B_2, B_3,$ and $B_4$ respectively.
\end{minipage}
\begin{minipage}{.25\textwidth}
\begin{center}
	\begin{tikzpicture}[scale=0.15]
	\draw[->] (-8.5,0)--(9,0) node[right]{$u$};
	\draw[->] (0,-8.5)--(0,9) node[above]{$v$};
	\draw[densely dotted, red] (-2,8.5)--(-2,-8.5);
	\draw[densely dotted, red] (-8.5,-2)--(8.5, -2);
	\draw[fill=cyan,fill opacity=0.35,draw=none] (0,8.5)--(0,0)--(8.5,0)--(8.5,8.5)--(0,8.5);
	\draw[fill=cyan,fill opacity=0.35,draw=none] (-2,8.5)--(-2,0)--(-8.5,0)--(-8.5,8.5)--(-2,8.5);
	\draw[fill=cyan,fill opacity=0.35,draw=none] (-2,-8.5)--(-2,-2)--(-8.5,-2)--(-8.5,-8.5)--(-2,-8.5);
	\draw[fill=cyan,fill opacity=0.35,draw=none] (0,-8.5)--(0,-2)--(8.5,-2)--(8.5,-8.5)--(0,-8.5);
	\node[blue,draw=none] at (4,-6) {$B_4$};
	\node at (0,-2){\textcolor{red}{$\bullet$}};
	\node[blue,draw=none] at (4,4) {$B_1$};
	\node at (0,0){\textcolor{red}{$\bullet$}};
	\node[blue,draw=none] at (-6,4) {$B_2$};
	\node at (-2,0){\textcolor{red}{$\bullet$}};
	\node[blue,draw=none] at (-6,-6) {$B_3$};
	\node at (-2,-2){\textcolor{red}{$\bullet$}};
	\node[blue,draw=none] at (3,-3) {\tiny $(0,-1)$};
	\node[blue,draw=none] at (2.5,1) {\tiny $(0,0)$};
	\node[blue,draw=none] at (-5,1) {\tiny $(-1,0)$};
	\node[blue,draw=none] at (-5.5,-3) {\tiny $(-1,-1)$};
	\node at (0,-11.5) {\textit{Figure: Blocks}};
	\end{tikzpicture}
	\end{center}
\end{minipage}

For $d=1$, a graded generalized Eulerian $A_1(A)$-module has a rigidity property due to \cite[Theorem 6.1]{Put}, \cite{PutRoy22}. We now 
extend this property to our setting.
\begin{theorem}[Rigidity]\label{multi-rigid}
{\rm(}with hypothesis as in \ref{sa}{\rm)} For a nonempty subset $U$ of $\mathcal{S}$, take a block $\mathcal{B}(\underline{a}^U)$. Then the following are equivalent:
	\begin{enumerate}[\rm(a)]
		\item $M_{\underline{u}} \neq 0$ for all $\underline{u} \in \mathcal{B}(\underline{a}^U)$.
		\item $M_{\underline{w}} \neq 0$ for some $\underline{w} \in \mathcal{B}(\underline{a}^U)$. 
	\end{enumerate}
\end{theorem}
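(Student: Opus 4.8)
The plan is to reduce the multigraded rigidity statement to the already-known one-dimensional case \cite[Theorem 6.1]{Put}, \cite{PutRoy22} by peeling off one coordinate at a time, using the Koszul-homology propositions established above. Only the implication (b) $\Rightarrow$ (a) requires argument; (a) $\Rightarrow$ (b) is trivial since $U \neq \emptyset$ forces $\mathcal{B}(\underline{a}^U)$ to be nonempty.

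For the main implication, first I would fix an index $j \in U$ and restrict attention to varying only the $j$-th coordinate, freezing all the others at values consistent with $\mathcal{B}(\underline{a}^U)$. Concretely, given $\underline{w} \in \mathcal{B}(\underline{a}^U)$ with $M_{\underline{w}} \neq 0$, I want to show $M_{\underline{u}} \neq 0$ whenever $\underline{u}$ differs from $\underline{w}$ only in the $j$-th coordinate, with $u_j \geq 0$. The idea is to consider the grading on $R$ in which $X_j$ has degree $1$ and everything else has degree $0$ (this is exactly the $R^{(j)}$-setup used in the proof that $H^i_I(R)$ is generalized Eulerian): under this coarser grading $M$ becomes a graded generalized Eulerian $A_1(R^{(j)})$-module, and each of its graded pieces $M_u = \bigoplus_{\underline{v}:\, v_j = u} M_{\underline{v}}$ is a direct sum over the remaining coordinates. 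The one-variable rigidity theorem applied to this $A_1(R^{(j)})$-module then tells us that $M_u \neq 0$ for all $u \geq 0$ once it is nonzero for some $u \geq 0$ — but this only gives nonvanishing of the whole slab $M_u$, not of the individual component $M_{\underline{u}}$.

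To sharpen this to individual components, I would instead argue directly via the Koszul complexes of Propositions \ref{genEur-XY} and \ref{genEur-partial}, which are the multigraded analogues of the tools used in \cite[Section 6]{Put}. Proposition \ref{genEur-XY} gives, for each $i$, the four-term exact sequence
\[0 \to H_1(X_i; M) \to M(-e_i) \xrightarrow{\cdot X_i} M \to H_0(X_i; M) \to 0,\]
with $H_j(X_i; M)_{\underline{u}} \neq 0$ only if $u_i = 0$; dually, Proposition \ref{genEur-partial} gives the $\partial_i$-Koszul sequence with homology concentrated in $u_i = -1$. Taking $\underline{u}$-components and using that $\cdot X_i$ and $\cdot \partial_i$ shift degree by $\pm e_i$, one gets that for $u_i \neq 0$ the map $M_{\underline{u} - e_i} \xrightarrow{\cdot X_i} M_{\underline{u}}$ is surjective and for $u_i \neq -1$ the map $M_{\underline{u}} \xrightarrow{\cdot \partial_i} M_{\underline{u} - e_i}$ is surjective. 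From these two surjectivities one deduces that, along the $i$-th coordinate direction with all other coordinates fixed, the pattern of nonvanishing components can only be a ``block'': once $M_{\underline{w}} \neq 0$ at some $\underline{w}$ with $w_i \geq 0$, multiplication by $X_i$ propagates nonvanishing upward to all larger values of the $i$-th coordinate (using $u_i \neq 0$), and some care at the boundary value $u_i = 0$ — exactly the place where $H_j(X_i;M)$ can be nonzero — is handled by feeding back into the one-variable rigidity statement for $\Tot$ or for the $A_1(R^{(i)})$-structure. Iterating this over all $i \in U$ connects any two points of $\mathcal{B}(\underline{a}^U)$ through a sequence of unit steps, each preserving nonvanishing, which yields (a).

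The main obstacle I anticipate is the boundary behaviour at coordinate value $0$: the surjectivity of $\cdot X_i$ on components fails precisely when $u_i = 0$, which is the one coordinate value in $U$-directions we most need to reach, and symmetrically the $\partial_i$-map fails at $u_i = -1$. Resolving this cleanly is where one genuinely needs the full strength of the one-dimensional rigidity theorem (not just the Koszul surjectivities), applied to the $A_1(R^{(i)})$-module structure so that the ``slab'' $M_{u_i = 0}$ is seen to be nonzero, and then a separate argument — again via the exact sequences, now reading off $H_0$ or $H_1$ — to pin the nonvanishing down to the specific component with the other coordinates prescribed. Organizing the induction so that one coordinate is fully handled before moving to the next, and making sure the intermediate lattice points stay inside $\mathcal{B}(\underline{a}^U)$ at every step, is the bookkeeping that needs to be done carefully but should present no conceptual difficulty beyond this boundary issue.
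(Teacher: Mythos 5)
Your instinct to reduce everything to the two Koszul propositions is right, and that is indeed what the paper does. But there are two flaws in how you read them off, and they cause you to invent a ``boundary difficulty'' that does not exist and to call on the one-variable rigidity theorem, which is unnecessary.

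First, the Koszul exact sequence of Proposition \ref{genEur-XY} gives in degree $\underline{u}$
\[
0 \to H_1(X_i;M)_{\underline{u}} \to M_{\underline{u}-e_i} \xrightarrow{\cdot X_i} M_{\underline{u}} \to H_0(X_i;M)_{\underline{u}} \to 0,
\]
and \emph{both} $H_0$ and $H_1$ vanish unless $u_i = 0$. So for $u_i \neq 0$ the map $M_{\underline{u}-e_i}\to M_{\underline{u}}$ is an \emph{isomorphism}, not merely a surjection; likewise $M_{\underline{u}+e_i}\xrightarrow{\cdot\partial_i}M_{\underline{u}}$ is an isomorphism for $u_i \neq -1$ by Proposition \ref{genEur-partial}. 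You claim only surjectivity, and this matters: surjectivity of $M_{\underline{u}-e_i}\to M_{\underline{u}}$ lets you pass nonvanishing \emph{downward} (from $\underline{u}$ to $\underline{u}-e_i$), yet your text asserts it pushes nonvanishing ``upward to all larger values'' — that direction requires injectivity, which you have not claimed. So as stated the propagation argument is internally inconsistent.

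Second, once you correctly read off isomorphisms, the ``boundary issue at coordinate value $0$'' evaporates. For $j\in U$ every $\underline{u}\in\mathcal{B}(\underline{a}^U)$ has $u_j\geq 0$, and the chain of isomorphisms $M_{\underline{u}}\cong M_{\underline{u}-e_j}$ used in stepping from $u_j$ down to $0$ all occur at $j$-th coordinate $\geq 1$, where the homology is guaranteed to vanish. Symmetrically for $j\notin U$ and $\partial_j$. One never needs the problematic map into the $u_j=0$ slice, and one never leaves the block. So the isomorphism $M_{\underline{u}} \cong M_{\underline{a}^U}$ for all $\underline{u}\in\mathcal{B}(\underline{a}^U)$ follows by a finite chain of the Koszul isomorphisms, and the one-variable rigidity theorem applied to the $A_1(R^{(j)})$-structure plays no role in the paper's argument (and would not by itself isolate individual multigraded components, as you yourself note). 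Repairing your write-up amounts to replacing ``surjective'' with ``isomorphism'' and deleting the detour; with that done it matches the paper's proof.
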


\begin{proof}
	Fix $i$. Consider the multigraded $A_{\underline{1}-e_i}(A)$-linear exact sequence
	\[0 \to H_1(\partial_i; M) \to M(e_i) \overset{\cdot \partial_i}{\lrt} M \to  H_0(\partial_i; M)\to 0,\]
	which induces an exact sequence 
	\[0 \to H_1(\partial_i; M)_{\underline{u}} \to M_{\underline{u}+e_i} \overset{\cdot \partial_i}{\lrt} M_{\underline{u}} \to  H_0(\partial_i; M)_{\underline{u}}\to 0\] 
	of $A$-modules. By Proposition \ref{genEur-partial}, $H_j(\partial_i; M)_{\underline{u}} \neq 0$ only if $u_i=-1$ for $j=0,1$. Thus the map $M_{\underline{u}+e_i} \overset{\cdot \partial_i}{\lrt} M_{\underline{u}}$ is an isomorphism for all $\underline{u} \in \Z^d$ with $u_i \leq -2$, that is, 
	\[M_{\underline{u}} \cong M_{(u_1, \ldots, u_{i-1}, -1, u_{i+1}, \ldots, u_d)} \quad \mbox{for all } \underline{u} \in \Z^d \mbox { with } u_i \leq -1.\]  
	Next, fix $j$. In view of Proposition \ref{genEur-XY}, one can show in a similar way that the map $M_{\underline{u}-e_j} \overset{\cdot X_j}{\lrt} M_{\underline{u}}$ is an isomorphism for all $\underline{u} \in \Z^d$ with $u_j \geq 1$, i.e., 
	\[M_{\underline{u}} \cong M_{(u_1, \ldots, u_{j-1}, 0, u_{j+1}, \ldots, u_d)} \quad \mbox{for all } \underline{u} \in \Z^d \mbox { with } u_j \geq 0.\] 
	It thus follows that given any subset $U$ of $\mathcal{S}$,
	\begin{equation}\label{comp_rel}
	M_{\underline{u}} \cong M_{\underline{a}^U} \quad \mbox{for all } \underline{u} \in \mathcal{B}(\underline{a}^U).
	\end{equation}
The result follows.
\end{proof}

\begin{remarks}
	
\noindent
\begin{enumerate}[\rm i)]
\item If $M \neq 0$ then $M_{w} \neq 0$ for some $\underline{w}\in \Z^d$. Since $\Z^d=\bigcup_{U \subseteq \mathcal{S}} \mathcal{B}(\underline{a}^U)$, there is a subset $U$ of $\mathcal{S}$ such that $\underline{w} \in \mathcal{B}(\underline{a}^U)$. By Theorem \ref{multi-rigid} it follows that $M_{\underline{u}} \neq 0$ for every $\underline{u} \in \mathcal{B}(\underline{a}^U)$.

\item We say an $A$-module $M$ is of \emph{rank $r$} if $M \otimes_A Q(A)$ is a free $Q(A)$-module of rank $r$, where $Q(A)$ denotes the \emph{total ring of fractions of $A$}, see \cite[Definition 1.4.2]{BruHer}. By Theorem \ref{multi-rigid}, we have
\begin{equation}\label{rank-rel}
\rank M_{\underline{u}} \cong \rank \mathcal{B}(\underline{a}^U) \quad \mbox{for all $\underline{u} \in \mathcal{B}(\underline{a}^U)$}.
\end{equation}
Suppose that $A$ is a domain. Then $Q(A)$ is in fact the field of fractions of $A$. In this case, \eqref{rank-rel} is already known. 
For instance, let $S=R \otimes_A Q(A) \cong Q(A)[X_1, \ldots, X_d]$. Then $\rank M_{\underline{u}}=\dim_{Q(A)} M_{\underline{u}} \otimes_A Q(A) \cong \dim_{Q(A)} N_{\underline{u}}$, where $N=H^i_{IS}(S)$. Observe that $IS$ is a usual monomial ideal of $S$. So by \cite[Definition 2.6, Theorem 2·11]{Yan2001},
\[N_{\underline{u}} \cong N_{\underline{a}^U} \quad \mbox{for all } \underline{u} \in \mathcal{B}(\underline{a}^U).\]
Hence $\dim_{Q(A)} N_{\underline{u}} \cong \dim_{Q(A)} N_{\underline{a}^U}$.
\end{enumerate}	
\end{remarks}

\section{Connection with straight modules}


In this section, $S=K[X_1, \ldots, X_d]$ is a polynomial ring in $d$ variables over a field $K$ of \emph{any characteristic}. Consider the standard $\Z^d$-grading on $S$ by setting $\deg X_1=e_i$ for $i=1, \ldots, r$. In \cite{Yan2001}, Yanagawa introduced the notion of {\it straight $S$-modules}.
For $\underline{w}\in \N^d$, we put $X^{\underline{w}}:=X_1^{w_1} \cdots X_d^{w_d}$.

\begin{definition}\label{str}
A $\Z^d$-graded $S$-module $M=\bigoplus_{\underline{u}\in \Z^d} M_{\underline u}$ is called straight, if the following two conditions are satisfied.
	
	(a) $\dim_K M_{\underline u}< \infty$ for all $\underline{u}\in \Z^d$.
	
	(b) The multiplication map $M_{\underline u}\ni y \mapsto X^{\underline{w}} \cdot y \in  M_{\underline{w}+\underline{u}}$ is bijective for all $\underline{u}\in \Z^d$ and each $\underline{w}\in \N^d$ with $\supp_+(\underline{w}+\underline{u}) = \supp_+(\underline{u})$.	
\end{definition} 
Note that condition (b) is equivalent to the condition: the multiplication map $M_{\underline u}\ni y \mapsto X_i \cdot y \in  M_{\underline{u}+e_i}$ is bijective for all $\underline{u}\in \Z^d$ with $u_i \neq 0$ for $i=1, \ldots, d$.

\vspace{0.15cm}
Suppose that $J \subseteq S$ is a monomial ideal. Since $H^i_J(S)=H^i_{\sqrt{J}}(S)$, without loss of generality we can assume that $J$ is square-free. The main example of a straight module is $H^i_J(S)(-\underline{1})$. In \cite{AlvGarZar}, J. A. Montaner, R. Garcia, and S. Zarzuela defined
a $\Z^d$-graded $S$-module $M$ 
to be a \emph{$\varepsilon$-straight module} if $M(-\underline{1})$ is a straight $S$-module.
The forthcoming result says that any $\Z^d$-graded generalized Eulerian $A_{\underline{1}}(K)$-module is $\varepsilon$-straight when $K$ is a field of characteristic zero.

\begin{lemma}[With hypothesis as in \ref{sa}]\label{genEu-imp-str}
Let $M$ be a $\Z^d$-graded generalized Eulerian $A_{\underline{1}}(A)$-module. Fix $\underline{w}\in \N^d$. Then the multiplication map $M_{\underline u}\in y \mapsto X^{\underline{w}}\cdot y \in  M_{\underline{w}+\underline{u}}$ is bijective for all $\underline{u}\in \Z^d$
with $\supp_+(\underline{w}+\underline{u}) = \supp_+(\underline{u})$.
	
Additionally, let $A=K$ be a field and $M$ be a holonomic $A_{\underline{1}}(K)$-module. Then $M(-\underline{1})$ is a straight $K[X_1, \ldots, X_d]$-module.	
\end{lemma}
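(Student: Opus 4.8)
The plan is to prove the two assertions of Lemma \ref{genEu-imp-str} separately, treating the bijectivity of the multiplication maps first and then deducing straightness.

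\textbf{Step 1: Bijectivity of multiplication by $X^{\underline w}$.} I would reduce to the case $\underline w = e_i$ for a single $i$, since $X^{\underline w} = \prod_i X_i^{w_i}$ and the support condition $\supp_+(\underline w + \underline u) = \supp_+(\underline u)$ is exactly what permits factoring through a chain of multiplications by individual $X_i$ each of whose source degree $\underline v$ satisfies $v_i \geq 1$ (or $v_i \leq -1$; the relevant case here is $v_i \geq 0$ with $\supp_+$ unchanged forces $v_i \geq 1$ whenever we are actually multiplying up past $0$, and when $u_i = 0$ one checks $w_i = 0$ as well so nothing happens in that slot). For the single-variable step, consider the Koszul homology exact sequence of $A_{\underline 1 - e_i}(A)$-modules
\[0 \to H_1(X_i; M) \to M(-e_i) \xrightarrow{\cdot X_i} M \to H_0(X_i; M) \to 0,\]
take its degree-$\underline u$ strand, and invoke Proposition \ref{genEur-XY}: $H_j(X_i;M)_{\underline u} \neq 0$ forces $u_i = 0$. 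Hence for $u_i \neq 0$ the map $M_{\underline u - e_i} \xrightarrow{\cdot X_i} M_{\underline u}$ is an isomorphism, which is precisely the $e_i$-case. Composing these isomorphisms along the staircase from $\underline u$ to $\underline u + \underline w$ gives the first claim; this is essentially the computation already carried out in the proof of Theorem \ref{multi-rigid}, so I would cite that argument rather than repeat it.

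\textbf{Step 2: The straight module conclusion.} Now assume $A = K$ a field and $M$ holonomic. Condition (b) of Definition \ref{str} is exactly what Step 1 delivers (with $A = K$). Condition (a), finiteness of $\dim_K M_{\underline u}$ for all $\underline u$, is precisely Theorem \ref{hol-mon}, which applies since $M$ is holonomic and $\Z^d$-graded generalized Eulerian over $A_{\underline 1}(K)$. It remains to observe that the shift is harmless: $M(-\underline 1)$ inherits the $\Z^d$-grading with $M(-\underline 1)_{\underline u} = M_{\underline u - \underline 1}$, still finite-dimensional in each degree, and the multiplication maps of $M(-\underline 1)$ are just those of $M$ reindexed, so the bijectivity statement for $M(-\underline 1)$ in degrees $\underline u$ with $\supp_+(\underline w + \underline u) = \supp_+(\underline u)$ translates to the same statement for $M$ — but one must be slightly careful here, because the straightness condition for $M(-\underline 1)$ is tested against the grading of $M(-\underline 1)$, and the support condition shifts accordingly. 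I would check that the equivalent formulation in the note after Definition \ref{str} (bijectivity of $\cdot X_i : N_{\underline u} \to N_{\underline u + e_i}$ whenever $u_i \neq 0$, for $N = M(-\underline 1)$) corresponds to bijectivity of $\cdot X_i : M_{\underline v} \to M_{\underline v + e_i}$ whenever $v_i \neq -1$, i.e. $v_i + 1 \neq 0$. This is again covered by Proposition \ref{genEur-XY}: $H_j(X_i; M)_{\underline v + e_i} \neq 0$ forces $v_i + 1 = 0$, so for $v_i \neq -1$ the map is an isomorphism. Hence $M(-\underline 1)$ satisfies both (a) and (b) and is straight.

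\textbf{Expected main obstacle.} The genuinely substantive input — finiteness of the graded components — is already isolated as Theorem \ref{hol-mon}, so the only real care needed is bookkeeping with the degree shift by $\underline 1$ and making sure the support condition in Definition \ref{str}(b) lines up with the index ranges where Proposition \ref{genEur-XY} guarantees the Koszul homology vanishes. In particular I would be careful that when $u_i = 0$ the condition $\supp_+(\underline w + \underline u) = \supp_+(\underline u)$ forces $w_i = 0$, so that no "bad" multiplication (one whose source degree has $i$-th coordinate exactly the value making Koszul homology possibly nonzero) ever enters the staircase factorization; this is the one place where a sign/indexing slip could invalidate the reduction, and it is where I would spend the most verification effort.
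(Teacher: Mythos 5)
Your argument follows the paper's route: both use the Koszul-homology isomorphisms from Proposition \ref{genEur-XY} (equivalently, equation \eqref{comp_rel} established in the proof of Theorem \ref{multi-rigid}) plus Theorem \ref{hol-mon} for finiteness, with the shift by $\underline{1}$ handled in the straightness step. There is, however, an off-by-one issue in your Step 1 that you should resolve before asserting that the staircase composition ``gives the first claim.'' The Koszul argument yields that $M_{\underline v} \xrightarrow{\cdot X_i} M_{\underline v + e_i}$ is bijective when $v_i \neq -1$, whereas the condition $\supp_+(\underline w + \underline u) = \supp_+(\underline u)$ with $\underline w = e_i$ amounts to $u_i \neq 0$. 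Read for $M$ itself (as the first sentence of the Lemma literally does), the claim fails: take $M = E(\underline{1})$ in one variable and $(u,w)=(-1,1)$ --- the map $M_{-1} \to M_0$ is $K \to 0$, while $\supp_+(0) = \supp_+(-1) = \emptyset$. Your ``expected obstacle'' check that $u_i = 0$ forces $w_i = 0$ is the correct bookkeeping for $M(-\underline{1})$, but for un-shifted $M$ the dangerous source coordinate is $u_i = -1$, where $w_i = 1$ slips past the support condition and the staircase hits a non-isomorphism. The paper's own proof sidesteps this by passing immediately to $M(-\underline{1})$ and the blocks $\mathcal{B}'(\underline{c}^U)$, never asserting the bijectivity for $M$ in the form stated; your Step 2 does exactly the right shifted computation and is sound. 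So the straightness conclusion is fine, but Step 1 should be reformulated for $M(-\underline{1})$ (or, equivalently, the condition rewritten as preservation of $\{i : u_i \geq 0\}$) to match what the Koszul isomorphisms actually give.
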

\begin{proof}
	By \eqref{comp_rel},
	$M_{\underline{u}} \cong M_{\underline{a}^U}$ for all $\underline{u} \in \mathcal{B}(\underline{a}^U)$.
	Note that $\underline{a}^U+(\underline{1})=\underline{c}^U$ where $c_i=1$ if $i \in U$ and $c_i=0$ if $i \notin U$. Clearly, $M_{\underline{a}^U} \cong M(-\underline{1})_{\underline{c}^U}$. 
	We set
	\[\mathcal{B}'(\underline{c}^U):=\{\underline{u} \in \Z^d \mid \mbox{ for all } i=1, \ldots, d; ~u_i \geq 1 \mbox{ if } i \in U \mbox{ and } u_i \leq 0 \mbox{ if } i \notin U \}\]
	Then
	\[M(-\underline{1})_{\underline{u}} \cong M_{\underline{a}^U} \cong M(-\underline{1})_{\underline{c}^U} \quad \mbox{for all } \underline{u} \in \mathcal{B}'(\underline{c}^U).\]
	Notice $\Z^d=\bigcup_{U \subseteq \mathcal{S}}\mathcal{B}(\underline{a}^U)=\bigcup_{U \subseteq \mathcal{S}}\mathcal{B}'(\underline{c}^U)$. So it can be verified that
	$\supp_+(\underline{u})=\supp_+(\underline{v})$ if and only if $\underline{u}$, $\underline{v} \in \mathcal{B}'(\underline{c}^U)$ for some subset $U$ of $\mathcal{S}$. The result follows.
	
	Next, suppose that $A=K$ is a field and $M$ is a holonomic $A_{\underline{1}}(K)$-module. 
By Proposition \ref{hol-mon}, $\dim_K M_{\underline{u}} < \infty$ for every $\underline{u} \in \Z^d$. 	
	Consequently, $M(-\underline{1})$ is a straight module.
\end{proof}

\begin{center}
	\begin{tikzpicture}[scale=0.15]
	\draw[->] (-8.5,0)--(9,0) node[right]{$u$};
	\draw[->] (0,-8.5)--(0,9) node[above]{$v$};
	\draw[densely dotted, red] (-2,8.5)--(-2,-8.5);
	\draw[densely dotted, red] (-8.5,-2)--(8.5, -2);
	\draw[fill=cyan,fill opacity=0.35,draw=none] (0,8.5)--(0,0)--(8.5,0)--(8.5,8.5)--(0,8.5);
	\draw[fill=cyan,fill opacity=0.35,draw=none] (-2,8.5)--(-2,0)--(-8.5,0)--(-8.5,8.5)--(-2,8.5);
	\draw[fill=cyan,fill opacity=0.35,draw=none] (-2,-8.5)--(-2,-2)--(-8.5,-2)--(-8.5,-8.5)--(-2,-8.5);
	\draw[fill=cyan,fill opacity=0.35,draw=none] (0,-8.5)--(0,-2)--(8.5,-2)--(8.5,-8.5)--(0,-8.5);
	\node at (0,-2){\textcolor{red}{$\bullet$}};
	\node at (0,0){\textcolor{red}{$\bullet$}};
	\node at (-2,0){\textcolor{red}{$\bullet$}};
	\node at (-2,-2){\textcolor{red}{$\bullet$}};
	\node[blue,draw=none] at (3,-3) {\tiny $(0,-1)$};
	\node[blue,draw=none] at (2.5,1) {\tiny $(0,0)$};
	\node[blue,draw=none] at (-5,1) {\tiny $(-1,0)$};
	\node[blue,draw=none] at (-5.5,-3) {\tiny $(-1,-1)$};
	\node at (0,-11.5) {\textit{Figure: $M$}};
	\end{tikzpicture}	
	\hspace{2.5cm}
	\begin{tikzpicture}[scale=0.15]
	\draw[->] (-8.5,0)--(9,0) node[right]{$u$};
	\draw[->] (0,-8.5)--(0,9) node[above]{$v$};
	\draw[fill=cyan,fill opacity=0.35,draw=none] (2,8.5)--(2,2)--(8.5,2)--(8.5,8.5)--(2,8.5);
	\draw[blue] (2,8.5)--(2,2)--(8.5,2);
	\draw[fill=cyan,fill opacity=0.35,draw=none] (0,8.5)--(0,2)--(-8.5,2)--(-8.5,8.5)--(0,8.5);
	\draw[blue] (0,8.5)--(0,2)--(-8.5,2);
	\draw[fill=cyan,fill opacity=0.35,draw=none] (0,-8.5)--(0,0)--(-8.5,0)--(-8.5,-8.5)--(0,-8.5);
	\draw[blue] (0,-8.5)--(0,0)--(-8.5,0);
	\draw[fill=cyan,fill opacity=0.35,draw=none] (2,-8.5)--(2,0)--(8.5,0)--(8.5,-8.5)--(0,-8.5);
	\draw[blue] (2,-8.5)--(2,0)--(8.5,0);
	\node at (2,0){\textcolor{red}{$\bullet$}};
	\node at (0,0){\textcolor{red}{$\bullet$}};
	\node at (0,2){\textcolor{red}{$\bullet$}};
	\node at (2,2){\textcolor{red}{$\bullet$}};
	\node[blue,draw=none] at (-2.5,3) {\tiny $(0,1)$};
	\node[blue,draw=none] at (4.5,3) {\tiny $(1,1)$};
	\node[blue,draw=none] at (4.5,-1.2) {\tiny $(1,0)$};
	\node[blue,draw=none] at (-2.5,-1.2) {\tiny $(0,0)$};
	\node at (0,-11.5) {\textit{Figure: $M(-\underline{1})$}};
	\end{tikzpicture}
\end{center}

By $\omega_S$ we denote the canonical module of $S$. Recall that $\omega_S \cong S(-\underline{1})$. For the rest of this section, we assume that $\chars K=0$. Lemma \ref{genEu-imp-str} gives us an alternative proof of the following result.

\begin{theorem}[\cite{Mus}, \cite{Ter}]\label{LC_straight}
Let $J$ be a squarefree monomial ideal. For all $i \geq 0$, the local cohomology module $H^i_J(\omega_S) \cong H^i_J(S)(-\underline{1})$ is a straight module.
	
More generally, assume that $\mathcal{T}$ is a multigraded Lyubeznik functor on ${}^*\Mod(S)$. Then $\mathcal{T}(S)(-\underline{1})$ is a straight $S$-module.
\end{theorem}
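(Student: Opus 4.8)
The plan is to deduce everything from Lemma~\ref{genEu-imp-str} together with the two structural facts recalled just above it: first, that a multigraded Lyubeznik functor $\mathcal{T}$ on ${}^*\Mod(S)$ produces, for $A=K$ a field of characteristic zero, a $\Z^d$-graded generalized Eulerian $A_{\underline{1}}(K)$-module $\mathcal{T}(S)$ (this is the specialization of Corollary~\ref{genEul-LyuFun} to the case $\Lambda=K$, since over a field the ring of differential operators is just $A_{\underline{1}}(K)$), and second, that such a module is holonomic. For the holonomicity: $\mathcal{T}(S)$ is built from $S$ by finitely many functors $H^j_{Y}(-)$ and cokernels thereof, where each $Y$ is a multihomogeneous locally closed subset of $\Spec(S)$; passing to $\Tot$, which is exact and sends $A_{\underline{1}}(K)$-modules to $A_d(K)$-modules, we get that $\Tot(\mathcal{T}(S))$ is obtained by the corresponding Lyubeznik-type operations from $\Tot(S)=S$, hence is a holonomic $A_d(K)$-module by the classical theory (Lyubeznik), so $\mathcal{T}(S)$ is $\Z^d$-graded holonomic by Definition~\ref{holo-multi}.

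Granting these two inputs, the special case $\mathcal{T}(-)=H^i_J(-)$ with $J$ squarefree monomial is immediate: $H^i_J(-)$ is a multigraded Lyubeznik functor (it is supported on the multihomogeneous closed set $V(J)$), so $H^i_J(S)$ is a holonomic, $\Z^d$-graded generalized Eulerian $A_{\underline{1}}(K)$-module, and Lemma~\ref{genEu-imp-str} (second assertion) says precisely that $H^i_J(S)(-\underline{1})$ is a straight $S$-module. The isomorphism $H^i_J(\omega_S)\cong H^i_J(S)(-\underline{1})$ comes from $\omega_S\cong S(-\underline{1})$ and the fact that local cohomology commutes with the grading shift. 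The general statement is then just the same argument with $H^i_J$ replaced by an arbitrary multigraded Lyubeznik functor $\mathcal{T}$: apply Corollary~\ref{genEul-LyuFun} with $A=K$ to get generalized Eulerianness, apply the $\Tot$-and-classical-holonomicity argument to get holonomicity, and then invoke Lemma~\ref{genEu-imp-str} to conclude $\mathcal{T}(S)(-\underline{1})$ is straight.

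The main obstacle is really just bookkeeping rather than a genuine difficulty: one must be careful that the notion of ``multigraded Lyubeznik functor'' is set up so that the building blocks are supported on \emph{multihomogeneous} locally closed subsets (so that the $\Z^d$-grading is preserved at every stage and Corollary~\ref{genEul-LyuFun} applies), and that $\Tot$ indeed carries a $\Z^d$-graded Lyubeznik functor on ${}^*\Mod(S)$ to an ordinary Lyubeznik functor on $\Mod(\Tot(S))$ compatibly with the $A_{\underline{1}}(K)\leadsto A_d(K)$ passage — this is essentially the content of \cite[4.1]{Put} and Lemma~\ref{Rel-multigrd-grd-genEur}, so it should be cited rather than reproved. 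Once that compatibility is in place, the holonomicity of $\Tot(\mathcal{T}(S))$ is the classical Lyubeznik theorem and nothing new needs to be checked. I would therefore write the proof as: (1) reduce to showing $\mathcal{T}(S)$ is holonomic and generalized Eulerian; (2) generalized Eulerian is Corollary~\ref{genEul-LyuFun}; (3) holonomic follows by applying $\Tot$ and the classical theory; (4) conclude by Lemma~\ref{genEu-imp-str}; (5) note the squarefree-$J$ case and the identification with $H^i_J(\omega_S)$ as a corollary.
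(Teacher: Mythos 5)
Your proposal is correct and takes essentially the same route the paper intends: the authors simply remark that Lemma~\ref{genEu-imp-str} ``gives an alternative proof,'' and the two ingredients needed to invoke that lemma are exactly the ones you identify --- that $\mathcal{T}(S)$ is $\Z^d$-graded generalized Eulerian (Corollary~\ref{genEul-LyuFun} with $A=K$, $\Lambda=K$) and that it is holonomic. You spell out the holonomicity step explicitly, which the paper treats as standard; your $\Tot$-based phrasing is a slight overcomplication, since $\Tot$ merely forgets the $\Z^d$-grading, so ``$\Tot(\mathcal{T}(S))$ is holonomic'' is literally just the assertion that the underlying $A_d(K)$-module $\mathcal{T}(S)$ is a Lyubeznik functor applied to the regular ring $S$ and hence holonomic by the classical theory --- no additional compatibility between $\Tot$ and the functor needs to be established, only that a multigraded Lyubeznik functor is in particular a Lyubeznik functor in the ordinary sense. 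With that minor streamlining your proof is the paper's.
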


An action of the $d^{th}$-Weyl algebra $A_{d}(K)$ 
on a straight $S$-module is defined in \cite[Remark 2.12]{Yan2001}. 

\begin{theorem}\label{str-GE}
Let $K$ is a field of characteristic zero. Let $M$ be a $\Z^d$-graded $S=K[X_1, \ldots, X_d]$-module. If $M$ is straight, then $M(\underline{1})$ is a $\Z^d$-graded Eulerian, holonomic $A_{\underline{1}}(K)$-module. Conversely, if $M(\underline{1})$ is a $\Z^d$-graded generalized Eulerian, \emph{holonomic} $A_{\underline{1}}(K)$-module, then $M$ is a straight $S$-module.	
\end{theorem}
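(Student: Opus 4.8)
\textbf{Proof plan for Theorem \ref{str-GE}.}

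The plan is to prove the two directions separately, leaning on Yanagawa's explicit description of the $A_d(K)$-action on a straight module together with the combinatorial rigidity we have already established. First I would treat the forward direction. Suppose $M$ is straight; by \cite[Remark 2.12]{Yan2001} the $d^{th}$-Weyl algebra $A_d(K)$ acts on $M$, and one checks that this action is compatible with the $\Z^d$-grading in the sense that $X_i$ and $\partial_i$ have multidegrees $e_i$ and $-e_i$, respectively, so that $M(\underline 1)$ becomes a $\Z^d$-graded $A_{\underline 1}(K)$-module. The Eulerian property is a direct computation: on a homogeneous $y \in M_{\underline u}$, i.e.\ $y \in M(\underline 1)_{\underline u + \underline 1}$, one uses the fact that $X_i \partial_i$ acts as multiplication by the $i^{th}$ coordinate (up to the shift $\underline 1$) precisely because straightness says the maps $\cdot X_i$ are bijective off the coordinate hyperplane $u_i = 0$, which forces the Euler operator to act as a scalar; tracking the shift gives $\e_i \cdot y = (u_i + 1) y$ on $M(\underline 1)$ in degree $\underline u + \underline 1$, hence $M(\underline 1)$ is $\Z^d$-graded Eulerian. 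For holonomicity, I would pass to $\Tot(M(\underline 1))$: since $\dim_K M_{\underline u} < \infty$ for all $\underline u$ and, by straightness, $M_{\underline u}$ depends only on $\supp_+(\underline u)$ (so there are only $2^d$ isomorphism classes of components), one sees that $\Tot(M(\underline 1))$ is a finitely generated graded $A_d(K)$-module whose Hilbert function grows polynomially of degree exactly $d$; this is the statement that $\Tot(M(\underline 1))$ has dimension $d = m+d$ with $m=0$, i.e.\ $M(\underline 1)$ is holonomic in the sense of Definition \ref{holo-multi}. (Alternatively one invokes that $H^i_J(S)(-\underline 1)$ exhausts, up to the relevant closure, the straight modules arising here, but the direct dimension count via $\Tot$ is cleaner and self-contained.)

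For the converse, suppose $M(\underline 1)$ is a $\Z^d$-graded generalized Eulerian, holonomic $A_{\underline 1}(K)$-module. Then Proposition \ref{hol-mon} (i.e.\ Theorem \ref{hol-mon}) gives $\dim_K M(\underline 1)_{\underline u} < \infty$, hence $\dim_K M_{\underline u} < \infty$ for all $\underline u$, so condition (a) of Definition \ref{str} holds. For condition (b), this is exactly the content of Lemma \ref{genEu-imp-str}: the multiplication map $y \mapsto X^{\underline w} \cdot y$ from $M(\underline 1)_{\underline u}$ to $M(\underline 1)_{\underline w + \underline u}$ is bijective whenever $\supp_+(\underline w + \underline u) = \supp_+(\underline u)$, which upon unshifting by $\underline 1$ is precisely the bijectivity required of $M$ in Definition \ref{str}(b). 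Thus $M$ is straight. One subtlety to flag: Lemma \ref{genEu-imp-str} as stated produces straightness of $M(-\underline 1)$ from a generalized Eulerian holonomic $M$; here I apply the underlying grading-shift bijectivity statement directly to $M(\underline 1)$, or equivalently reindex by replacing $M$ with $M(\underline 1)$ throughout, and the point is simply that the Eulerian hypothesis on $M(\underline 1)$ is what is needed to run that argument.

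The main obstacle I anticipate is the holonomicity claim in the forward direction: one must verify that the $A_d(K)$-module $\Tot(M(\underline 1))$ really has dimension $d$ and not less, i.e.\ that it is never ``too small'' — equivalently that a nonzero straight module has at least one component that is nonzero in a full-dimensional cone of degrees. This follows from the rigidity phenomenon (if one component in a block $\mathcal B(\underline a^U)$ is nonzero then all are, cf.\ Theorem \ref{multi-rigid} and its proof), so the Hilbert polynomial of $\Tot(M(\underline 1))$ has degree exactly $d$ as soon as $M \neq 0$; the zero case is trivial. Once this is in place, everything else is bookkeeping with the degree shift by $\underline 1$ and the already-proved Lemma \ref{genEu-imp-str} and Theorem \ref{hol-mon}.
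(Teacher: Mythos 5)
Your proposal follows essentially the same route as the paper. For the converse you apply Lemma~\ref{genEu-imp-str} to $M(\underline 1)$ and read off both conditions of Definition~\ref{str}; that is exactly what the paper does (the finite-dimensionality of the pieces being the ``additionally'' clause of that lemma, which invokes Theorem~\ref{hol-mon}). For the forward direction, the paper constructs the $\partial_i$-action explicitly from straightness --- $\partial_i\cdot x=(u_i-1)y$ where $X_iy=x$ when $u_i\neq1$, and $\partial_i\cdot x=0$ when $u_i=1$ --- obtains $\e_i\cdot x=(u_i-1)x$ on $M_{\underline u}$, and hence $\e_i\cdot z=v_iz$ on $M(\underline1)_{\underline v}=M_{\underline v+\underline 1}$. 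Your ``$\e_i\cdot y=(u_i+1)y$'' and ``$y\in M(\underline1)_{\underline u+\underline1}$'' use the opposite shift convention $M(a)_n=M_{n-a}$; under the paper's convention ($M(\underline1)_{\underline v}=M_{\underline1+\underline v}$) the arithmetic would read $(u_i-1)$, so be careful to keep these consistent, but this is bookkeeping and not a gap.

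The one genuine divergence is your treatment of holonomicity. The paper simply cites \cite[Remark 2.12(i)]{Yan2001}, which states that any straight $S$-module is holonomic over $A_d(K)$; with Definition~\ref{holo-multi} this is exactly what is needed. Your alternative --- a dimension count on $\Tot(M(\underline1))$ --- is in the right spirit but is not quite a proof as written: holonomicity is measured via a good Bernstein filtration, not via the $\Z$-graded Hilbert function of $\Tot$. You would still need to (i) produce a good filtration on $\Tot(M(\underline1))$, (ii) show its growth is $O(\nu^d)$ (your observation that nonzero components lie in finitely many blocks, each contributing $O(\nu^{d-1})$ per degree slice, is the right estimate for this), and (iii) invoke Bernstein's inequality for the lower bound when $M\neq0$ (rigidity gives nonvanishing on a whole block but the lower bound on the Bernstein dimension is Bernstein's inequality, not rigidity). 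Since Yanagawa has already done this, the citation is cleaner; but your sketch could be completed.
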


\begin{proof}
	The converse follows from Lemma \ref{genEu-imp-str}. 
	
	Next, let $x \in M_{\underline{u}}$. If $u_i \neq 1$, then $M_{\underline{u}-e_i} \overset{\cdot X_i}{\lrt} M_{\underline{u}}$ is a bijective map. Thus there is an unique element $y \in M_{\underline{u}-e_i}$ such that $X_i \cdot y=x$. We set $\partial_i \cdot x=(u_i-1)y$. If $u_i=1,$ then we put $\partial_i \cdot x=0$. Since in any case $\partial_i \cdot x \in M_{\underline{u}-e_i}$ for all $i$, this action induces a $\Z^d$-graded structure on $M$ as an $A_{\underline{1}}(K)$-module. Note that $X_i \partial_i \cdot x= X_i \cdot (u_i-1)y= (u_i-1) X_i \cdot y=(u_i-1)x$. Set $N:=M(\underline{1})$. Then for any $z \in N_{\underline{v}}=M(\underline{1})_{\underline{v}}=M_{\underline{1}+ \underline{v}}$ we have 
	\[X_i \partial_i \cdot z= (\overline{1+v_i}-1)z=v_i z \qquad \mbox{for } i=1, \ldots, r.\]
	Hence $N$ is a $Z^d$-graded Eulerian $A_{\underline{1}}(K)$-module. 
	
	By \cite[Remark 2.12 (i)]{Yan2001}, any straight $S$-module $M$ is a holonomic $A_d(K)$-module. So 
$M$ is a holonomic $\Z^d$-graded $A_{\underline{1}}(K)$-module. 
This immediately establishes the result.
\end{proof}

\begin{remark}
From Theorem \ref{str-GE} it follows that any $\Z^d$-graded generalized Eulerian, \emph{holonomic} $A_{\underline{1}}(K)$-module is $\Z^d$-graded Eulerian.
\end{remark}

\section{Finiteness of some invariants linked to the components and a comparative study}

Throughout this section along with the hypothesis \ref{sa} we further assume that $A$ is regular. To obtain finiteness properties, we need the forthcoming results.

\begin{proposition}[with hypotheses as in \ref{sa}]\label{Koszul-local}
Suppose that $A=K[[Y_1, \ldots, Y_m]]$ and $\Lambda= A\langle \delta_1, \ldots, \delta_m\rangle$ is the ring of $K$-linear differential operators on $A$. If $L$ is a $\Z^d$-graded generalized Eulerian, holonomic  $A_{\underline{1}}(\Lambda)$-module, then $H_l(Y_m, L)$ are $\Z^d$-graded generalized Eulerian, holonomic $A_{\underline{1}}(\Lambda')$- modules for $l = 0, 1$ where $\Lambda'=A' \langle \delta_1, \ldots, \delta_{m-1}\rangle$ and $A'=K[[Y_1, \ldots, Y_{m-1}]]$.
\end{proposition}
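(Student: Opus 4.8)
The plan is to imitate, in the multigraded setting, the Koszul-homology argument that handles one power-series variable at a time. First I would record the structural setup: $Y_m$ is a central element of $\Lambda$ (it commutes with $\delta_1,\dots,\delta_{m-1}$ and with all the $X_j,\partial_j$, and $[\delta_m,Y_m]=1$ is irrelevant once we pass to $H_l(Y_m,-)$ since $\delta_m$ no longer acts). Thus multiplication by $Y_m$ on $L$ is $A_{\underline 1}(\Lambda')$-linear and multihomogeneous of degree $\underline 0$, so the Koszul complex $0\to L(\text{no shift})\xrightarrow{\cdot Y_m} L\to 0$ gives a four-term exact sequence
\[
0\to H_1(Y_m,L)\to L \xrightarrow{\cdot Y_m} L\to H_0(Y_m,L)\to 0
\]
of $\Z^d$-graded $A_{\underline 1}(\Lambda')$-modules (with all maps multihomogeneous of degree $\underline 0$).

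The generalized Eulerian property is then immediate and is the easy part: for $l=0,1$ and any multihomogeneous $y\in H_l(Y_m,L)_{\underline u}$, lift/represent $y$ by a multihomogeneous element $\tilde y\in L_{\underline u}$; since $L$ is $\Z^d$-graded generalized Eulerian there is $a\ge 1$ with $(\e_i-u_i)^a\tilde y=0$ for all $i$, and because each $\e_i=X_i\partial_i$ commutes with $\cdot Y_m$ this relation descends to $H_1$ (a submodule of $L$) and passes to $H_0=L/Y_mL$. Hence both Koszul homologies are $\Z^d$-graded generalized Eulerian $A_{\underline 1}(\Lambda')$-modules. This is exactly the pattern used in Proposition \ref{genEur-XY}.

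The substantive point is holonomicity, and this is where I expect the main obstacle. The definition (\ref{holo-multi}) of holonomic is in terms of $\Tot$: $L$ holonomic over $A_{\underline 1}(\Lambda)$ means $\Tot(L)$ has dimension $m+d$ as an $A_d(\Lambda)$-module (or is zero). One checks that $\Tot$ commutes with $H_l(Y_m,-)$ — $\Tot$ is just a regrouping of graded pieces and $\cdot Y_m$ preserves total degree — so $\Tot(H_l(Y_m,L))=H_l(Y_m,\Tot(L))$. Now $\Tot(L)$ is a holonomic $A_d(\Lambda)$-module, and I would invoke the classical fact (this is \cite[Theorem 1.6.2, Chapter 2]{Bjo}, already cited in the proof of Theorem \ref{hol-mon}) that for a holonomic $A_d(\Lambda)$-module $N$ and a variable $Y_m$ of the base ring, the Koszul homology modules $H_l(Y_m,N)$ are holonomic over $A_d(\Lambda')$ — here one must be slightly careful that the relevant "holonomicity transfers along $H_\bullet$ of a central element of the coefficient ring" statement is the one in Björk, phrased for $A_m(\cdot)$-type algebras, applied with the roles of the $Y$'s and $X$'s appropriately bookkept (dimension drops from $m+d$ over the $(m+d)$-variable algebra to $(m-1)+d$ over the $(m-1+d)$-variable algebra). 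Therefore $\Tot(H_l(Y_m,L))$ has dimension $(m-1)+d$, i.e. $H_l(Y_m,L)$ is a $\Z^d$-graded holonomic $A_{\underline 1}(\Lambda')$-module. Finite generation of $H_l(Y_m,L)$ over $A_{\underline 1}(\Lambda')$ follows since $L$ is finitely generated and $A_{\underline 1}(\Lambda')$ is Noetherian (or simply because holonomic modules are Noetherian by Lemma \ref{fl_hol}). Assembling the two halves gives the proposition; the only care needed is the precise citation and indexing of the Björk holonomicity-transfer result for Koszul homology with respect to a coefficient variable.
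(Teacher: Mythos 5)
Your proposal follows the same two-part strategy as the paper: (i) observe that multiplication by $Y_m$ is $A_{\underline 1}(\Lambda')$-linear and multihomogeneous of degree $\underline 0$, so the four-term Koszul exact sequence lives in the category of $\Z^d$-graded $A_{\underline 1}(\Lambda')$-modules; (ii) deduce the generalized Eulerian property from this and then transfer holonomicity through $\Tot$. For step (ii) on the Eulerian side you re-run the lifting argument of Proposition~\ref{genEur-XY} by hand, whereas the paper simply invokes Proposition~\ref{exact} (closure of the generalized Eulerian class under kernels and cokernels of multihomogeneous maps); both are valid and essentially the same idea. The one point to fix is the reference for the holonomicity-transfer step: Björk's Theorem~1.6.2 (which the paper cites in Theorem~\ref{hol-mon}) governs killing a \emph{polynomial} Weyl-algebra variable $X_i$, but here you are killing a \emph{power-series} variable $Y_m$ of the coefficient ring $A=K[[Y_1,\ldots,Y_m]]$, where $\Lambda$ is not a Weyl algebra. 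The correct reference for that descent is the one the paper actually uses, \cite[Theorem~4.23]{Put}, which is the power-series analogue; you gesture at this bookkeeping issue, but the citation should be to the power-series version rather than to Björk's polynomial-variable statement. With that substitution your argument matches the paper's proof.
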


\begin{proof}	
Notice $H_1(Y_m, L)=\{e \in L \mid Y_m \cdot e=0\}$ and $H_0(Y_m, L)=L/Y_mL$. Since the map $L \xrightarrow{\cdot Y_m} L$ is $A_{\underline{1}}(\Lambda')$-linear, $H_l(Y_m, L)$ are $\Z^d$-graded generalized Eulerian $A_{\underline{1}}(\Lambda')$-modules by Proposition \ref{exact}. Moreover, by Definition \ref{holo-multi} and \cite[Theorem 4.23]{Put}, they are holonomic $A_{\underline{1}}(\Lambda')$-modules for $l = 0, 1$.
\end{proof}


The following result can be established from Proposition \ref{Koszul-local} by using induction.
\begin{corollary}[with hypotheses as in Proposition \ref{Koszul-local}]\label{full-koszul-hom} Then for each $\nu \geq 0$, the Koszul homology $H_\nu(Y_1, \ldots, Y_m; L)$ is a holonomic, $\Z^d$-graded generalized Eulerian module over the Weyl algebra $A_{\underline{1}}(K)$.
\end{corollary}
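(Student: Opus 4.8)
The plan is to induct on the number $m$ of power-series variables, peeling off one variable at a time via Proposition \ref{Koszul-local} and the standard Koszul-complex iteration. For the base case $m=0$ there is nothing to prove: $\Lambda = K$, $A_{\underline{1}}(\Lambda) = A_{\underline{1}}(K)$, and $L$ itself is already a holonomic, $\Z^d$-graded generalized Eulerian $A_{\underline{1}}(K)$-module (there are no $Y_i$'s, so $H_0(\,; L) = L$ and all higher Koszul homologies vanish). So assume $m \geq 1$ and that the statement holds for $m-1$; write $A' = K[[Y_1,\ldots,Y_{m-1}]]$ and $\Lambda' = A'\langle \delta_1,\ldots,\delta_{m-1}\rangle$.

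The key step is to split the Koszul complex $K_\bullet(Y_1,\ldots,Y_m; L)$ as the mapping cone (or tensor product) $K_\bullet(Y_1,\ldots,Y_{m-1}; L) \otimes_A K_\bullet(Y_m; L)$, which yields the usual short exact sequences of $A_{\underline{1}}(\Lambda')$-modules
\[
0 \to H_0\bigl(Y_m; H_\nu(Y_1,\ldots,Y_{m-1}; L)\bigr) \to H_\nu(Y_1,\ldots,Y_m; L) \to H_1\bigl(Y_m; H_{\nu-1}(Y_1,\ldots,Y_{m-1}; L)\bigr) \to 0
\]
for each $\nu \geq 0$. By the induction hypothesis, each $H_j(Y_1,\ldots,Y_{m-1}; L)$ is a holonomic, $\Z^d$-graded generalized Eulerian $A_{\underline{1}}(\Lambda')$-module — here I should be slightly careful that the induction hypothesis is applied with $\Lambda'$ in the role of $\Lambda$, which is legitimate since $A' = K[[Y_1,\ldots,Y_{m-1}]]$ has exactly the required form. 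Applying Proposition \ref{Koszul-local} (with $A'$, $\Lambda'$ playing the roles of $A$, $\Lambda$ and $Y_{m-1}$ the variable being killed — or, more transparently, applying the single-variable Koszul homology statement with respect to $Y_m$ acting on the $A_{\underline{1}}(\Lambda')$-module via $A \to A'$... ) gives that $H_0(Y_m; -)$ and $H_1(Y_m; -)$ of each such module are holonomic, $\Z^d$-graded generalized Eulerian $A_{\underline{1}}(K)$-modules. Finally, Proposition \ref{exact} (closure of the generalized Eulerian property under extensions) together with closure of holonomicity under extensions — which follows from Definition \ref{holo-multi}, since $\Tot$ is exact and the dimension of a module in a short exact sequence is the max of the dimensions of the outer two — shows that the middle term $H_\nu(Y_1,\ldots,Y_m; L)$ is again holonomic and $\Z^d$-graded generalized Eulerian over $A_{\underline{1}}(K)$. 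This closes the induction.

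The main obstacle is bookkeeping rather than mathematics: one must set up the two-step Koszul filtration so that the one-variable result of Proposition \ref{Koszul-local} is applied to the correct base ring at each stage, and one must confirm that "holonomic" is genuinely preserved under extensions in the $\Z^d$-graded $A_{\underline{1}}(\Lambda)$-setting (this reduces, via $\Tot$ and Lemma \ref{Rel-multigrd-grd-genEur}/Definition \ref{holo-multi}, to the corresponding fact for the Bernstein dimension of $A_d(\Lambda)$-modules, which is classical). Once these are in place the argument is a routine induction, and indeed the paper elects to state it as "can be established from Proposition \ref{Koszul-local} by using induction."
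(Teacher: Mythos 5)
Your overall strategy (induct on $m$, exploit the Koszul short exact sequences, apply the one-variable result of Proposition~\ref{Koszul-local}, and close via extension-closure of both the generalized Eulerian property and holonomicity) is exactly the paper's intended route, and your observation that holonomicity passes through short exact sequences via $\Tot$ and Bernstein dimension is correct. However, the Koszul short exact sequence you chose is arranged in the wrong order relative to Proposition~\ref{Koszul-local}, and this breaks the inductive step.

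Proposition~\ref{Koszul-local} is engineered to kill $Y_m$ \emph{first}: starting from an $A_{\underline{1}}(\Lambda)$-module $L$, it produces $A_{\underline{1}}(\Lambda')$-modules $H_l(Y_m;L)$ ($l=0,1$). The induction hypothesis (for $m-1$) then applies to those $A_{\underline{1}}(\Lambda')$-modules and hands back $A_{\underline{1}}(K)$-modules. The SES that matches this order is
\[
0 \to H_\nu\bigl(Y_1,\ldots,Y_{m-1}; H_0(Y_m;L)\bigr) \to H_\nu(Y_1,\ldots,Y_m;L) \to H_{\nu-1}\bigl(Y_1,\ldots,Y_{m-1}; H_1(Y_m;L)\bigr) \to 0,
\]
with $Y_m$ peeled on the \emph{inside}. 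You instead wrote the (also valid, but not useful here) SES with $Y_m$ on the outside,
\[
0 \to H_0\bigl(Y_m; H_\nu(Y_1,\ldots,Y_{m-1};L)\bigr) \to H_\nu(Y_1,\ldots,Y_m;L) \to H_1\bigl(Y_m; H_{\nu-1}(Y_1,\ldots,Y_{m-1};L)\bigr) \to 0,
\]
and then asserted that the induction hypothesis makes $H_j(Y_1,\ldots,Y_{m-1};L)$ a holonomic generalized Eulerian $A_{\underline{1}}(\Lambda')$-module. That assertion is wrong for two reasons. First, the $m-1$ induction hypothesis, if it applied, would output $A_{\underline{1}}(K)$-modules, not $A_{\underline{1}}(\Lambda')$-modules. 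Second, it does not even apply as stated: $L$ is a module over $A_{\underline{1}}(\Lambda)$, not over $A_{\underline{1}}(\Lambda')$ (and holonomicity over the smaller ring is not known by restriction); more fundamentally, multiplication by $Y_j$ on $L$ does not commute with $\delta_j$ for $j\le m-1$, so the Koszul homologies $H_j(Y_1,\ldots,Y_{m-1};L)$ carry no natural $\delta_1,\ldots,\delta_{m-1}$-action and are not $A_{\underline{1}}(\Lambda')$-modules at all (they are modules over the centralizer, roughly $A_{\underline{1}}(K[[Y_m]]\langle\delta_m\rangle)$ after the $Y_j$ act by zero). The fix is simply to use the first SES above: apply Proposition~\ref{Koszul-local} to get $H_l(Y_m;L)$ over $A_{\underline{1}}(\Lambda')$, apply the induction hypothesis to those, and then invoke Proposition~\ref{exact} together with extension-closure of holonomicity for the middle term.
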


Suppose that $L$ is an $A$-module. Let $\injdim_A L$ stand for the injective dimension of $L$.
The $j^{th}$ Bass number of $L$ with respect to a prime ideal $\p$ in $A$ is defined as 
\[\mu_j(\p, L) = \dim_{k(\p)} \Ext^j_{A_{\p}}(k(\p), L_\p),\] where $k(\p)$ denotes the residue field of $A_\p$.

The following result due to Lyubeznik is crucial for the study of Bass numbers.

\begin{lemma}\label{Lyu}\cite[1.4]{Lyu93}
Let $B$ be a Noetherian ring and $N$ be a (not necessarily finitely generated) $B$-module. Let $P$ be a prime ideal in $B$. If $\left(H^j_P(N)\right)_P$ is injective for all $j \geq 0$ then $\mu_j(P,N) = \mu_0\left(P,H^j_P(N)\right)$.
\end{lemma}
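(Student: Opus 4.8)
The plan is to localize at $P$ and then compute the cohomology of one and the same complex in two different ways, reading off $\mu_j(P,N)$ from one computation and $\mu_0(P,H^j_P(N))$ from the other.

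\emph{Reduction to the local case.} Local cohomology and Bass numbers both commute with localization: $(H^j_P(N))_P=H^j_{PB_P}(N_P)$, and by definition $\mu_j(P,N)$ and $\mu_0\big(P,H^j_P(N)\big)$ depend only on $B_P$ together with $N_P$ and $(H^j_P(N))_P$ respectively. So I may replace $B,N$ by $B_P,N_P$ and assume that $(B,\mathfrak m,k)$ is Noetherian local, that $P=\mathfrak m$, and (the hypothesis) that $H^j_{\mathfrak m}(N)$ is an injective $B$-module for every $j\ge 0$. The goal becomes $\dim_k\Ext^j_B(k,N)=\dim_k\Hom_B\big(k,H^j_{\mathfrak m}(N)\big)$ for all $j\ge 0$.

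\emph{One complex, first computation.} Pick any injective resolution $N\to E^\bullet$ and set $C^\bullet:=\Gamma_{\mathfrak m}(E^\bullet)$. Since $B$ is Noetherian, $\Gamma_{\mathfrak m}$ carries injectives to injectives, so $C^\bullet$ is a bounded-below complex of injective $B$-modules, and $H^q(C^\bullet)=H^q_{\mathfrak m}(N)$. The key (and elementary) point is that $\Hom_B(k,-)$ cannot distinguish $M$ from $\Gamma_{\mathfrak m}(M)$: every $B$-linear map $k\to M$ has image annihilated by $\mathfrak m$ and hence lies in $\Gamma_{\mathfrak m}(M)$. Applying this degreewise gives $\Hom_B(k,C^\bullet)=\Hom_B(k,E^\bullet)$ as complexes, so $H^n\big(\Hom_B(k,C^\bullet)\big)=\Ext^n_B(k,N)$, a $k$-vector space of dimension $\mu_n(\mathfrak m,N)$.

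\emph{Second computation and conclusion.} Because $C^\bullet$ is a bounded-below complex of injectives, there is a hyper-$\Ext$ spectral sequence $E_2^{p,q}=\Ext^p_B\big(k,H^q(C^\bullet)\big)\Rightarrow H^{p+q}\big(\Hom_B(k,C^\bullet)\big)$. This is where the hypothesis is used decisively: each $H^q(C^\bullet)=H^q_{\mathfrak m}(N)$ is injective, so $\Ext^p_B\big(k,H^q(C^\bullet)\big)=0$ for $p>0$, the spectral sequence collapses onto the column $p=0$, and one obtains $H^n\big(\Hom_B(k,C^\bullet)\big)\cong\Hom_B\big(k,H^n_{\mathfrak m}(N)\big)$, of dimension $\mu_0\big(\mathfrak m,H^n_{\mathfrak m}(N)\big)$. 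Comparing with the first computation yields $\mu_n(\mathfrak m,N)=\mu_0\big(\mathfrak m,H^n_{\mathfrak m}(N)\big)$, i.e. $\mu_j(P,N)=\mu_0\big(P,H^j_P(N)\big)$. (Spectral sequences can be avoided: a bounded-below complex of injectives all of whose cohomology modules are injective splits as the direct sum of an acyclic complex of injectives and the complex $\bigoplus_q H^q(C^\bullet)[-q]$ with zero differentials — peel off, starting from the lowest nonzero degree, each cohomology module as a direct summand, using that submodules-that-are-injective split off and that extensions of injectives are injective — after which the two computations of $H^n\big(\Hom_B(k,C^\bullet)\big)$ visibly agree.) I expect the only real friction to lie in the localization bookkeeping, and in pinning down the facts about Noetherian rings that make $C^\bullet$ a legitimate object (direct sums of injectives are injective; $\Gamma_{\mathfrak m}$ preserves injectivity); the actual content is the single observation that the injectivity of $(H^j_P(N))_P$ is precisely what makes the spectral sequence degenerate.
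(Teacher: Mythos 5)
The paper does not give its own proof of this lemma; it simply cites \cite[1.4]{Lyu93}, so there is no internal argument to compare against. Your proof is correct and is, in substance, the standard one (and Lyubeznik's own): after localizing, the hyper-$\Ext$ spectral sequence you use is precisely the Grothendieck spectral sequence for the composition $\Hom_B(k,-)=\Hom_B(k,-)\circ\Gamma_{\mathfrak m}(-)$, which is available because $\Gamma_{\mathfrak m}$ carries injectives to injectives over a Noetherian ring (your key observation that $\Hom_B(k,-)$ does not see the difference between a module and its $\mathfrak m$-torsion part is exactly what identifies the composite with $\Hom_B(k,-)$), and it degenerates under the injectivity hypothesis on the $H^q_{\mathfrak m}(N)$. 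The localization bookkeeping at the start and the remarks on why $C^\bullet$ is a legitimate bounded-below complex of injectives are both correct, so no gaps.
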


\begin{theorem}[with hypotheses as in \ref{sa}]\label{Bass_fin} Further assume that $A$ is regular. Fix $\underline{u} \in \Z^d$. Let $\p$ be a prime ideal in $A$. Then for each $j \geq 0$, the Bass number $\mu_j\left(\p, M_{\underline{u}}\right)$ is finite. 	
\end{theorem}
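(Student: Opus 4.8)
The plan is to reduce the statement to the known finite-length behavior in the power-series case via a standard ``presentation'' of a regular ring, and then to invoke Lyubeznik's Lemma~\ref{Lyu} together with the holonomicity machinery built up in Corollary~\ref{full-koszul-hom}. First I would localize: since Bass numbers $\mu_j(\p, M_{\underline u})$ are computed after localizing at $\p$, and since $M_{\underline u} = H^i_I(R)_{\underline u}$ commutes with localization on $A$ in the obvious sense (the $\Z^d$-grading is over $A$, and localization at $\p\subseteq A$ is exact and commutes with the {\v C}ech complex computing $H^i_I(R)$), I may replace $A$ by $A_\p$ and hence assume $(A,\p)$ is a regular local ring containing a field of characteristic zero. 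After completing — which again does not change Bass numbers, as $\widehat{A_\p}$ is faithfully flat over $A_\p$ with the relevant Ext modules behaving well — I may assume $A = K[[Y_1,\ldots,Y_m]]$ is a formal power series ring over a field $K$ of characteristic zero, which is exactly the setting of Proposition~\ref{Koszul-local} and Corollary~\ref{full-koszul-hom}.

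Next I would set $M := H^i_I(R)$, which by the theorem in Section~2 is a $\Z^d$-graded generalized Eulerian $A_{\underline 1}(\Lambda)$-module, and by \cite[Theorem 4.something]{Put} (the analogue of Lyubeznik's holonomicity of local cohomology, in the multigraded $\Lambda$-setting) is holonomic. Fix $\underline u\in\Z^d$ and write $N := M_{\underline u}$, an $A$-module. To apply Lemma~\ref{Lyu} with $B = A$, $P = \p$ (the maximal ideal, after the reduction above), I need to show that $\bigl(H^j_\p(N)\bigr)_\p = H^j_\p(N)$ is an injective $A$-module for every $j\ge 0$; then $\mu_j(\p, N) = \mu_0(\p, H^j_\p(N)) = \dim_K \soc H^j_\p(N)$, and it remains to see this socle dimension is finite. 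The key point is that $H^j_\p(N)$ can be computed as the $\underline u$-th graded component of $H^j_{\p R}(M)$ — equivalently, via the Koszul/{\v C}ech complex on $Y_1,\ldots,Y_m$ applied to $M$, whose homology $H_\nu(Y_1,\ldots,Y_m; M)$ is, by Corollary~\ref{full-koszul-hom}, a holonomic $\Z^d$-graded generalized Eulerian $A_{\underline 1}(K)$-module. Since $A_{\underline 1}(K)$-holonomic modules have finite length (Lemma~\ref{fl_hol}), each such Koszul homology has finite length, hence so does each of its graded components $H_\nu(Y_1,\ldots,Y_m;M)_{\underline u}$ by Theorem~\ref{hol-mon}; passing from Koszul homology on a regular system of parameters to $H^\bullet_\p(-)$ (stable Koszul / {\v C}ech limit) and using that $A$ is regular so $H^j_\p(N)$ is a successive extension of copies of $E_A(K)$, one concludes each $H^j_\p(N)$ is a finite direct sum of copies of the injective hull $E_A(K)$, in particular injective with finite-dimensional socle.

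Assembling: injectivity of $\bigl(H^j_\p(N)\bigr)_\p$ for all $j$ lets Lemma~\ref{Lyu} apply, giving $\mu_j(\p,N) = \mu_0(\p, H^j_\p(N))$, which is the (finite) number of copies of $E_A(K)$ appearing in $H^j_\p(N)$; therefore $\mu_j(\p, M_{\underline u})<\infty$ for all $j\ge 0$, as claimed. I expect the main obstacle to be the second step — rigorously identifying $H^j_\p(M_{\underline u})$ with the $\underline u$-component of a holonomic module and verifying it is a finite direct sum of copies of $E_A(K)$ (i.e. that the local cohomology of the component, computed over $A$, matches taking the component of the local cohomology over $R$, and that the generalized-Eulerian/holonomic structure survives each stable Koszul cohomology step). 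The generalized Eulerian hypothesis is what makes the finite-length conclusion for components available (Theorem~\ref{hol-mon}), so care is needed to keep track of it through the spectral sequence / iterated mapping cone computing $H^\bullet_\p$ from the $Y_i$; once that bookkeeping is done, finiteness of the Bass numbers is immediate from finite length plus Lemma~\ref{Lyu}.
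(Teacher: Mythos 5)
Your proposal has the right ingredients in view — Lemma~\ref{Lyu}, Corollary~\ref{full-koszul-hom}, Theorem~\ref{hol-mon}, and the holonomic/generalized Eulerian machinery — and the reduction to $A = K[[Y_1,\ldots,Y_m]]$ is workable (one has to be slightly careful because $M_{\underline u}$ is not finitely generated, but $\Ext^j_A(k,N)\otimes_A\widehat A \cong \Ext^j_{\widehat A}(k,\widehat A\otimes_A N)$ by flatness and these are $k$-vector spaces, so the Bass numbers survive). The genuine gap is in the middle step, where you establish that $H^j_\p(N)$ is injective. You argue: the Koszul homologies $H_\nu(Y_1,\ldots,Y_m;M)$ are holonomic over $A_{\underline 1}(K)$, hence of finite length, hence have finite-dimensional components; then ``passing to the stable Koszul limit'' and ``$A$ regular, so $H^j_\p(N)$ is a successive extension of copies of $E_A(K)$'', conclude $H^j_\p(N)$ is a \emph{finite} direct sum of copies of $E_A(K)$. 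None of these inferences holds. The Koszul complex on finite powers of the $Y_i$ and the \v Cech complex are different; finite length is lost when you pass to the direct limit, so finiteness of $H_\nu(Y;M)_{\underline u}$ tells you nothing directly about $H^j_{\p R}(M)_{\underline u}$. Moreover, an $\m$-torsion module over a regular local ring is in general \emph{not} a successive extension of copies of $E$ (the residue field $k$ itself is $\m$-torsion and not injective), and asserting that the direct sum is finite at this point is circular, since that is precisely the Bass number you are trying to bound.

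What actually closes this gap — and what the paper uses — is the $\D$-module structure on the component of the local cohomology, not on $M$. Since $H^j_{\p R}(M)$ is the value of a $\Z^d$-graded Lyubeznik functor, Corollary~\ref{genEul-LyuFun} makes $H^j_{\p R}(M)_{\underline u}$ a module over $\Lambda = \D_K(\widehat{A_\p})$ supported only at the maximal ideal; Lyubeznik's classification of such $\D$-modules (\cite[Prop.~2.3, Thm.~2.4]{Lyu93}) then says it is a (possibly infinite) direct sum $E^{\alpha(\underline u)}$, hence injective, with no finiteness assumption. That is what legitimizes Lemma~\ref{Lyu}. Finiteness of $\alpha(\underline u)$ is a \emph{separate} step, obtained by taking the $m$-th Koszul homology of $H^j_{\p R}(M)$ itself (not of $M$): $H_m(Y_1,\ldots,Y_m;\FF(T))_{\underline u} = K^{\alpha(\underline u)}$ is the $\underline u$-component of a holonomic generalized Eulerian $A_{\underline 1}(K)$-module (Corollary~\ref{full-koszul-hom}), and Theorem~\ref{hol-mon} forces it to be finite-dimensional. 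So you need to (i) apply the Koszul argument to $H^j_{\p R}(M)$ rather than to $M$, and (ii) replace the ``successive extension'' heuristic with the Lyubeznik classification theorem to get injectivity before, and independently of, the finiteness statement.
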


\begin{proof}
Fix $\underline{u} \in \Z^d$ and put $N:=H^i_I(R)_{\underline{u}}$. Take a prime ideal $\p$ in $A$ and fix $j \geq 0$.
\begin{claim*}
$H^j_\p(N)_\p$ is an injective $A$-module.
\end{claim*}
 Clearly, either $H^j_\p(N)_\p=0$ or $H^j_\p(N)_\p \neq 0$.
Since the first case is trivial, we assume $H^j_\p(N)_\p \neq 0$. This occurs when $\p$ is the minimal prime of $H^j_\p(N)$. Set $B:=\widehat{A_\p}$ and put $T:=\widehat{A_\p}[X_1, \ldots, X_d]$. As $B$ is a complete regular local ring containing a field of characteristic zero so by the \emph{Cohen-structure theorem}, $B \cong k[[Y_1, \ldots, Y_m]]$ where $k:=k(\p)$ is the residue field of $A_\p$ and $m: =\hgt \p$. Using similar arguments as in \cite[2.4]{Put} we get that $\mathcal{F}(-):=\widehat{A_\p} \otimes_A H^j_{\p R} \left(H^i_I(-)\right)$ is a $\Z^d$-graded Lyubeznik functor on ${}^*\Mod(T)$. Let $\D_B$ denote the ring of $k$-linear differential operators on $B$. Then by 
Corollary \ref{genEul-LyuFun}, $\FF(T)$ is a $\Z^d$-graded generalized Eulerian, holonomic $A_{\underline{1}}\left(\D_B\right)$-module. Therefore, $\FF(T)_{\underline{u}}$ is a 
$\D_B$-module. Since $\FF(T)_{\underline{u}}$ is supported only at the maximal ideal of $B$, by \cite[Proposition 2.3, Theorem 2.4]{Lyu93}, there is an ordinal (possibly infinite) $\alpha(\underline{u})$ such that $\FF(T)_{\underline{u}} \cong E_B(k)^{\alpha(\underline{u})}$, where $E_B(k)$ denotes the injective hull of $k$ as a $B$-module.
Notice $E_B(k) \cong E_A(A/\mathfrak{p})$. As $H^j_{\p} \left(M_{\underline{u}}\right)_\p$ has a natural structure of $\widehat{A_\p}$-module (see \cite[2.12]{Put}) so we have $H^j_{\p} \left(M_{\underline{u}}\right)_\p \cong \FF(T)_{\underline{u}}$.
The claim follows. 

Hence Lemma \ref{Lyu} is applicable, that is,
\begin{equation}\label{Bass_i_0}
\mu_j(\p, M_{\underline{u}})=\mu_0(\p, H^j_{\p R}\left(M\right)_{\underline{u}})=\mu_0(\p,\FF(T)_{\underline{u}}).
\end{equation}  
	
By Corollary \ref{full-koszul-hom} we have $\mathcal{V}=H_m(Y_1,\ldots, Y_m; \FF(T))$ is a $\Z^d$-graded generalized Eulerian, holonomic $A_{\underline{1}}(K)$-module with 
\[\mathcal{V}_{\underline{u}}=H_m(Y; \FF(T))_{\underline{u}}=H_m(Y; \FF(T)_{\underline{u}})=H_m(Y; E_B(K)^{\alpha_{\underline{u}}})=H_m(Y;E_B(K))^{\alpha_{\underline{u}}}=K^{\alpha_{\underline{u}}}.\]
The last equality holds by \cite[2.11]{Put}. From Theorem \ref{hol-mon} it follows that the Bass number \[\mu_j(P,M_{\underline{u}})=\alpha_{\underline{u}}=\dim_k \mathcal{V}_{\underline{u}}\] is finite.
\end{proof}

\begin{theorem}[with hypotheses as in Theorem \ref{Bass_fin}]\label{injdim-dim} 
For fixed $\underline{u} \in \Z^d$, 
\[\injdim M_{\underline{u}} \leq \dim M_{\underline{u}}.\]	
\end{theorem}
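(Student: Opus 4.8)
The plan is to bound the injective dimension of $M_{\underline u}$ by relating its Bass numbers to the vanishing of local cohomology, exactly as in the proof of Theorem \ref{Bass_fin}, and then to run a descending induction on the dimension of $\Supp M_{\underline u}$. Recall the standard fact (Bass, see also \cite[Theorem 3.1.17]{BruHer}) that for any finitely generated-or-not $A$-module $N$, $\injdim_A N \le n$ if and only if $\mu_j(\p, N) = 0$ for all primes $\p$ and all $j > n$; equivalently $\injdim_A N = \sup\{ j : \mu_j(\p,N) \neq 0 \text{ for some } \p\}$. So it suffices to show $\mu_j(\p, M_{\underline u}) = 0$ whenever $j > \dim M_{\underline u}$.

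The key step is the Bass-number computation already carried out in Theorem \ref{Bass_fin}: for a fixed prime $\p$ of $A$ with $m := \hgt\p$, after passing to $B := \widehat{A_\p} \cong k[[Y_1,\dots,Y_m]]$ we have, via Lemma \ref{Lyu}, $\mu_j(\p, M_{\underline u}) = \mu_0(\p, H^j_{\p R}(M)_{\underline u}) = \mu_0(\p, \FF(T)_{\underline u})$, where $\FF(-) = \widehat{A_\p}\otimes_A H^j_{\p R}(H^i_I(-))$ is a $\Z^d$-graded Lyubeznik functor on ${}^*\Mod(T)$ and $\FF(T)$ is a $\Z^d$-graded generalized Eulerian, holonomic $A_{\underline1}(\D_B)$-module. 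The point is that $H^j_{\p R}(M)_{\underline u} \cong H^j_{\p}(M_{\underline u})_\p$ is a (possibly infinite) sum of copies of $E_B(k) \cong E_A(A/\p)$; call the number of copies $\alpha_{\underline u}(\p,j)$. Then $\mu_j(\p, M_{\underline u}) = \alpha_{\underline u}(\p,j)$, and this is nonzero precisely when $H^j_{\p}(M_{\underline u})_\p \neq 0$, i.e. when $\p \in \Supp\bigl(H^j_\p(M_{\underline u})\bigr)$ — which forces $\p \in \Supp M_{\underline u}$. Moreover, since $H^j_\p(M_{\underline u})_\p \neq 0$ requires $j \le \dim_{A_\p}\bigl((M_{\underline u})_\p\bigr) \le \dim M_{\underline u}$ by Grothendieck's vanishing theorem applied to the $A_\p$-module $(M_{\underline u})_\p$. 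This already gives $\mu_j(\p, M_{\underline u}) = 0$ for $j > \dim M_{\underline u}$, and hence the desired inequality $\injdim M_{\underline u} \le \dim M_{\underline u}$.

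Let me spell out the one subtlety I expect to be the main obstacle: one must be sure that $H^j_\p(M_{\underline u})_\p$ being injective (the Claim in the proof of Theorem \ref{Bass_fin}, which holds for \emph{all} $j$) really does license the identity $\mu_j(\p, M_{\underline u}) = \mu_0(\p, H^j_\p(M_{\underline u}))$ and that $\mu_0$ of a sum of copies of $E_A(A/\p)$ equals the number of copies; this is exactly Lemma \ref{Lyu} together with $\mu_0(\p, E_A(A/\p)^{(\Sigma)}) = |\Sigma|$, so there is nothing new here beyond bookkeeping. The genuinely new ingredient over Theorem \ref{Bass_fin} is just the vanishing range: combine (i) $\mu_j(\p,M_{\underline u}) \neq 0 \Rightarrow \p \in \Supp M_{\underline u}$, so only primes in the support contribute, and (ii) for such a $\p$, $H^j_\p((M_{\underline u})_\p) = 0$ for $j > \dim_{A_\p}((M_{\underline u})_\p)$. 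Taking the supremum over all contributing $\p$ and all $j$ yields $\injdim M_{\underline u} \le \sup_{\p \in \Supp M_{\underline u}} \dim_{A_\p}((M_{\underline u})_\p) = \dim \Supp M_{\underline u} = \dim M_{\underline u}$, completing the argument. No further finiteness of the $\alpha$'s is needed for this bound — that was the content of the previous theorem — so the proof here is short once Theorem \ref{Bass_fin} is in hand.
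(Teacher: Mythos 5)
Your proposal is correct and follows essentially the same route as the paper: both proofs bound the injective dimension by showing that the Bass numbers $\mu_j(\p, M_{\underline u})$ vanish for $j > \dim M_{\underline u}$, via the identity $\mu_j(\p, M_{\underline u}) = \mu_0(\p, H^j_\p(M_{\underline u}))$ already established in Theorem \ref{Bass_fin} (Lemma \ref{Lyu}) combined with the Grothendieck vanishing theorem. The extra localization-at-$\p$ bookkeeping you spell out is a careful but inessential elaboration of what the paper states in two lines.
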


\begin{proof}	
From the \emph{Grothendieck vanishing theorem} we have $H^j_\p(M_{\underline{u}}) = 0$ for all $j > \dim M_{\underline{u}}$, see \cite[6.1.2]{BroSha}. In view of \eqref{Bass_i_0} we get that $\mu_j(\p, M_{\underline{u}})=\mu_0(\p, H^j_\p(M_{\underline{u}}))=0$ for all $j > \dim M_{\underline{u}}$. 
The statement follows.
\end{proof}

Given any subset $U$ of $\mathcal{S}$, from \eqref{comp_rel} we have the isomorphism $M_{\underline{u}} \cong M_{\underline{a}^U}$ of $A$-modules for all $\underline{u} \in \mathcal{B}(\underline{a}^U)$. As an immediate consequence we get the two successive results.

\begin{theorem}[with hypotheses as in Theorem \ref{Bass_fin}]\label{multi-bass} Let $\p$ be a prime ideal in $A$. Fix $j \geq 0$. EXACTLY one of the following holds:
		\begin{enumerate}[\rm (a)]
			\item $\mu_j(\p, M_{\underline{u}}) = 0$ for all $\underline{u} \in \Z^d$.
			\item There is a collection $\mathcal{Z}$ of subsets of $\mathcal{S}$ such that $\mu_j(\p, M_{\underline{u}}) \neq 0$ for every $\underline{u} \in \mathcal{B}(\underline{a}^U)$ with $U \in \mathcal{Z}$ and $\mu_j(\p, M_{\underline{u}}) = 0$ otherwise. Moreover, 
			\[\mu_j(\p, M_{\underline{u}}) = \mu_j(\p, M_{\underline{a}^U}) \quad \mbox{ for all } \underline{u} \in \mathcal{B}(\underline{a}^U).\]
		\end{enumerate}
\end{theorem}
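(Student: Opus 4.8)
The plan is to deduce the statement entirely from the block isomorphism \eqref{comp_rel} established in the proof of Theorem \ref{multi-rigid}. First I would recall that $\Z^d$ is the disjoint union $\bigsqcup_{U \subseteq \mathcal{S}} \mathcal{B}(\underline{a}^U)$, so that every $\underline{u} \in \Z^d$ lies in exactly one block $\mathcal{B}(\underline{a}^U)$. By \eqref{comp_rel} we have an isomorphism $M_{\underline{u}} \cong M_{\underline{a}^U}$ of $A$-modules for each $\underline{u} \in \mathcal{B}(\underline{a}^U)$ and each $U \subseteq \mathcal{S}$ (the multihomogeneous isomorphisms $\cdot\partial_i$ and $\cdot X_i$ used there restrict to $A$-module isomorphisms on the relevant graded pieces).

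Next I would invoke the elementary fact that the Bass numbers $\mu_j(\p,-)$ depend only on the isomorphism class of an $A$-module, since they are defined as $\dim_{k(\p)}\Ext^j_{A_\p}(k(\p),(-)_\p)$. Hence $\mu_j(\p, M_{\underline{u}}) = \mu_j(\p, M_{\underline{a}^U})$ for every $\underline{u} \in \mathcal{B}(\underline{a}^U)$, which already gives the displayed equality asserted in part (b).

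Finally I would set $\mathcal{Z} := \{ U \subseteq \mathcal{S} : \mu_j(\p, M_{\underline{a}^U}) \neq 0 \}$. If $\mathcal{Z} = \emptyset$, then by the previous step $\mu_j(\p, M_{\underline{u}}) = 0$ for every $\underline{u} \in \Z^d$, so (a) holds. If $\mathcal{Z} \neq \emptyset$, then for $\underline{u} \in \mathcal{B}(\underline{a}^U)$ we have $\mu_j(\p, M_{\underline{u}}) \neq 0$ precisely when $U \in \mathcal{Z}$, and since the blocks partition $\Z^d$ this is case (b). Cases (a) and (b) are mutually exclusive and jointly exhaustive, so exactly one of them holds.

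There is essentially no obstacle here: all the content of the statement is carried by the rigidity theorem \ref{multi-rigid}, and the only additional ingredient is the observation that Bass numbers are isomorphism invariants. Theorem \ref{Bass_fin} is not needed for the dichotomy itself; it is used only if one wishes the nonzero Bass numbers appearing in (b) to be finite, which they are under the present hypothesis that $A$ is regular. The single point requiring (minimal) care is the verification that $\{\mathcal{B}(\underline{a}^U)\}_{U\subseteq\mathcal{S}}$ is a genuine partition of $\Z^d$ and that \eqref{comp_rel} is an $A$-module isomorphism rather than merely a $\Z^d$-graded vector-space isomorphism.
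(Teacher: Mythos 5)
Your proposal is correct and follows essentially the same route as the paper: the paper's own justification is the one-line remark that \eqref{comp_rel} gives $A$-module isomorphisms $M_{\underline{u}} \cong M_{\underline{a}^U}$ on each block, from which the theorem is said to be ``an immediate consequence.'' You have simply spelled out the (genuinely routine) details — that Bass numbers are isomorphism invariants, that the blocks partition $\Z^d$, and how to build $\mathcal{Z}$ to obtain the dichotomy — and you are right that Theorem \ref{Bass_fin} is used only to guarantee the finiteness of the nonzero Bass numbers, not for the dichotomy itself.
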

Let $\Supp_A L = \{\p \mid \p \mbox{ is a prime in } A \mbox{ and } L_\p \neq 0\}$ be the support of an $A$-module $L$. By $\dim_A L$ we mean the dimension of $\Supp_A L$ as a subspace of $\Spec(A)$, the set of prime ideals in $A$. By \eqref{comp_rel}, we obtain the following.

\begin{theorem}[with hypotheses as in Theorem \ref{Bass_fin}]\label{multi-injdim-dim} Fix a subset $U$ of $\mathcal{S}$. Then 
	\[\injdim M_{\underline{u}} = \injdim M_{\underline{a}^{U}} \quad \mbox{and} \quad \dim M_{\underline{u}} = \dim M_{\underline{a}^{U}}\] 
	for all $\underline{u} \in \mathcal{B}\left(\underline{a}^U\right)$.
\end{theorem}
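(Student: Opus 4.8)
The plan is to deduce Theorem \ref{multi-injdim-dim} as an essentially immediate corollary of the isomorphism \eqref{comp_rel} established in the Rigidity Theorem \ref{multi-rigid}. Recall that under the hypotheses of \ref{sa} together with the regularity of $A$, the proof of Theorem \ref{multi-rigid} produces, for each subset $U$ of $\mathcal{S}$, an isomorphism of $A$-modules
\[M_{\underline{u}} \cong M_{\underline{a}^U} \quad \text{for all } \underline{u} \in \mathcal{B}(\underline{a}^U).\]
The key observation is that both the injective dimension and the Krull dimension of the support are invariants of an $A$-module up to isomorphism. Therefore, fixing a subset $U$ and any $\underline{u} \in \mathcal{B}(\underline{a}^U)$, the chain of isomorphisms forces $\injdim_A M_{\underline{u}} = \injdim_A M_{\underline{a}^U}$ and $\dim_A M_{\underline{u}} = \dim_A M_{\underline{a}^U}$.

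Concretely, the first step is to recall \eqref{comp_rel}: one does not need to re-prove it, only to invoke Theorem \ref{multi-rigid}, whose proof exhibits the isomorphisms $M_{\underline{u}+e_i} \xrightarrow{\cdot \partial_i} M_{\underline{u}}$ for $u_i \le -2$ and $M_{\underline{u}-e_j} \xrightarrow{\cdot X_j} M_{\underline{u}}$ for $u_j \ge 1$, composed appropriately to land at $M_{\underline{a}^U}$ from any point of the block. The second step is to note that these maps are $A$-linear (the multiplication-by-$\partial_i$ and multiplication-by-$X_j$ maps commute with the $A$-action, since $A$ sits in degree zero and the Weyl-algebra generators act $A$-linearly in the relevant sense), so \eqref{comp_rel} is genuinely an isomorphism of $A$-modules and not merely of abelian groups. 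The third step is the trivial but necessary remark that $\Ext^j_{A_\p}(k(\p), (-)_\p)$ and hence each Bass number $\mu_j(\p, -)$, the injective dimension $\injdim_A(-) = \sup\{j : \mu_j(\p,-) \ne 0 \text{ for some } \p\}$, and the support $\Supp_A(-)$ (hence $\dim_A(-)$) are all preserved by $A$-module isomorphisms. Combining, the theorem follows; indeed this is exactly how Theorem \ref{multi-bass} was deduced one paragraph earlier, and the present statement is the analogous packaging for $\injdim$ and $\dim$.

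There is essentially no obstacle here: the content was already spent in proving Theorem \ref{multi-rigid}, and in establishing (via Theorem \ref{Bass_fin} and Theorem \ref{injdim-dim}) that these invariants are well-behaved and finite. The only thing to be careful about is the $A$-linearity of the isomorphisms in \eqref{comp_rel}, which is why I would spell that out explicitly rather than leave it implicit. One could write the whole argument in two or three sentences.

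\begin{proof}
By Theorem \ref{multi-rigid}, more precisely by the isomorphisms constructed in its proof, for every subset $U$ of $\mathcal{S}$ the multiplication maps $M_{\underline{u}+e_i} \xrightarrow{\cdot \partial_i} M_{\underline{u}}$ (for $u_i \le -2$) and $M_{\underline{u}-e_j} \xrightarrow{\cdot X_j} M_{\underline{u}}$ (for $u_j \ge 1$) are $A$-linear bijections; composing them suitably yields the $A$-module isomorphism
\[M_{\underline{u}} \cong M_{\underline{a}^U} \quad \mbox{for all } \underline{u} \in \mathcal{B}\left(\underline{a}^U\right),\]
which is \eqref{comp_rel}. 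Since the injective dimension $\injdim_A(-)$ and the dimension $\dim_A(-)$ of the support are invariants of an $A$-module up to isomorphism, we conclude that
\[\injdim M_{\underline{u}} = \injdim M_{\underline{a}^{U}} \quad \mbox{and} \quad \dim M_{\underline{u}} = \dim M_{\underline{a}^{U}}\]
for all $\underline{u} \in \mathcal{B}\left(\underline{a}^U\right)$.
\end{proof}
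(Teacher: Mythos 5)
Your proof is correct and follows exactly the paper's own route: the paper also deduces Theorem \ref{multi-injdim-dim} directly from the $A$-module isomorphism \eqref{comp_rel} established in the Rigidity Theorem, remarking that the result is an immediate consequence since $\injdim$ and $\dim$ are invariant under $A$-module isomorphism. Your added caution about $A$-linearity of the multiplication maps is a sensible elaboration but does not represent a departure from the paper's argument.
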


\s{\bf Associated primes:}\\
For fixed $i\geq 0$, we 
now study the behavior of the set $\Ass_A H^i_I(R)_{\underline{u}}$ of associated primes of $H^i_I(R)_{\underline{u}}$ as $\underline{u} \in \Z^d$ varies.

\begin{theorem}[with hypotheses as in \ref{sa}]\label{multi-ass}
	Further assume that either $A$ is regular local or a smooth affine algebra over a field of characteristic zero. Then 
	\begin{enumerate}[\rm (1)]
		\item $\bigcup_{\underline{u} \in \Z^d} \Ass_A M_{\underline{u}}$ is a finite set.
		\item For any subset $U \subseteq \mathcal{S}$, \[\Ass_A M_{\underline{u}} = \Ass_A M_{\underline{a}^U} \quad  \mbox{ for all }\underline{u} \in \mathcal{B}(\underline{a}^U).\]
	\end{enumerate} 
\end{theorem}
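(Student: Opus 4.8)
\textbf{Proof proposal for Theorem \ref{multi-ass}.}

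The plan is to reduce both statements to the corresponding facts about the single component $M_{\underline{a}^U}$, exploiting the isomorphisms $M_{\underline{u}} \cong M_{\underline{a}^U}$ furnished by \eqref{comp_rel} in Theorem \ref{multi-rigid}. Since $\Z^d = \bigcup_{U \subseteq \mathcal{S}} \mathcal{B}(\underline{a}^U)$ is a finite union (there are only $2^d$ subsets $U$), statement (2) is immediate once the isomorphism of $A$-modules $M_{\underline{u}} \cong M_{\underline{a}^U}$ for $\underline{u} \in \mathcal{B}(\underline{a}^U)$ is invoked, because $\Ass_A$ depends only on the $A$-module structure; and statement (1) reduces to showing $\bigcup_{U \subseteq \mathcal{S}} \Ass_A M_{\underline{a}^U}$ is finite, i.e. each individual $\Ass_A M_{\underline{a}^U}$ is a finite set. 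So the whole content is: \emph{for fixed $\underline{v} \in \Z^d$, the set $\Ass_A H^i_I(R)_{\underline{v}}$ is finite.}

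To prove finiteness of $\Ass_A M_{\underline{v}}$, I would first reduce to the local case. A prime $\p \in \Ass_A M_{\underline{v}}$ iff $\p A_\p \in \Ass_{A_\p}(M_{\underline{v}})_\p$, and by faithfully flat base change to the completion, iff the maximal ideal of $\widehat{A_\p}$ is associated to $\widehat{A_\p} \otimes_{A_\p} (M_{\underline{v}})_\p$. Using the Cohen structure theorem as in the proof of Theorem \ref{Bass_fin}, $\widehat{A_\p} \cong k(\p)[[Y_1, \ldots, Y_m]]$ with $m = \hgt \p$, and the functor $\mathcal{F}(-) = \widehat{A_\p} \otimes_A H^i_I(-)$ localized appropriately is a $\Z^d$-graded Lyubeznik functor, so by Corollary \ref{genEul-LyuFun} the module $\mathcal{F}(T)$ is $\Z^d$-graded generalized Eulerian and holonomic over $A_{\underline{1}}(\D_B)$; hence its component $\mathcal{F}(T)_{\underline{v}}$ is a holonomic $\D_B$-module, in particular a module of finite length over $\D_B$ by Lemma \ref{fl_hol}. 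A finite-length $\D_B$-module has a finite set of associated primes (each composition factor being a holonomic $\D_B$-module, whose support is closed and irreducible-component-finite; alternatively, finite length forces $\Ass$ finite). This shows that for each fixed prime $\p$ the "local contribution" is controlled. The remaining task is to globalize: I would argue that $\p \in \Ass_A M_{\underline{v}}$ forces $\p \in \Supp_A M_{\underline{v}}$, and $\Supp_A M_{\underline{v}}$ is a closed subset of $\Spec A$ whose generic points are finite in number; combined with the fact that associated primes of a finite-length $\D$-module over the completion are among those generic points (after the reduction above, associated primes must be minimal in the support of the relevant holonomic module), this yields finiteness of $\Ass_A M_{\underline{v}}$.

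The main obstacle, I expect, is the globalization step: passing from the statement "for each individual prime $\p$, the completed local picture has only finitely many associated primes" to "the global set $\Ass_A M_{\underline{v}}$ is finite." This requires knowing that $\Supp_A M_{\underline{v}}$ has finitely many irreducible components (equivalently finitely many minimal primes) and that every associated prime of $M_{\underline{v}}$ is minimal in this support — the latter being exactly the kind of statement that follows from the holonomicity/finite-length structure but needs care to transfer across the completion and localization. I would handle this by the standard device used by Lyubeznik and Puthenpurakal: observe that $H^i_{\p R}(H^i_I(R))_{\underline{v}}$ is injective after localizing and completing (as established en route to Theorem \ref{Bass_fin}), so by Lemma \ref{Lyu} the Bass number $\mu_0(\p, M_{\underline{v}})$ equals $\mu_0(\p, H^0_\p(M_{\underline{v}}))$, and $\p \in \Ass_A M_{\underline{v}}$ iff $\mu_0(\p, M_{\underline{v}}) \neq 0$; then finiteness of $\{\p : \mu_0(\p, M_{\underline{v}}) \neq 0\}$ follows from a Noetherian-induction / generic-freeness argument on $\Spec A$ exactly as in \cite[Theorem 3.6(b)]{Lyu93} and its analogues in \cite{Put}. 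The rest of the proof is then the formal reduction of part (2) to \eqref{comp_rel} and part (1) to the finiteness of the $2^d$ sets $\Ass_A M_{\underline{a}^U}$.
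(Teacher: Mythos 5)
Your reduction of part (2) to the rigidity isomorphism $M_{\underline{u}} \cong M_{\underline{a}^U}$ from \eqref{comp_rel}, and of part (1) to finiteness of the $2^d$ sets $\Ass_A M_{\underline{a}^U}$, is correct and matches the paper's use of rigidity. But the route you take for the finiteness of $\Ass_A M_{\underline{v}}$ at a fixed $\underline{v}$ is both different from the paper's and has a genuine gap at exactly the place you flag.

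The paper avoids the component-by-component localization/completion machinery entirely. It proves that $\Ass_R H^i_I(R)$ — the $R$-associated primes of the \emph{whole} module — is finite. In the smooth affine case this follows directly from \cite[3.7]{Lyu93} applied to the smooth affine algebra $R = A[X_1,\ldots,X_d]$. In the regular local case, the paper observes that since $\TT(R)$ is $\Z^d$-graded, its associated primes are multihomogeneous and hence lie in the homogeneous maximal ideal $\M = (\n, X_1,\ldots,X_d)$; thus $\Ass_R \TT(R) \cong \Ass_{R_\M}(\TT(R)_\M)$, and finiteness follows from \cite[3.3]{Lyu93} because $R_\M$ is regular local and $\TT(R)_\M$ is the output of a Lyubeznik functor on $\Mod(R_\M)$. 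Once $\Ass_R M$ is finite, Proposition~\ref{fini-ass-ringext} applied to $A \hookrightarrow R$ gives finiteness of $\Ass_A M$, and since each $M_{\underline{u}}$ is an $A$-direct summand of $M$, one has $\bigcup_{\underline{u}} \Ass_A M_{\underline{u}} \subseteq \Ass_A M$.

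Your version localizes and completes at each prime $\p$ and argues via holonomicity of $\mathcal{F}(T)$. The globalization step you identify as the main obstacle is a genuine gap. Finite length of the completed module at a single $\p$ controls the Bass number $\mu_0(\p, M_{\underline{v}})$ at that $\p$, but gives no bound on the \emph{number} of primes $\p$ with $\mu_0(\p, M_{\underline{v}}) \neq 0$. The appeal to "a Noetherian-induction / generic-freeness argument as in \cite[3.6(b)]{Lyu93}" does not close it: that argument exploits the D-module structure of all of $H^i_I(R)$ over the regular ring $R$, whereas the component $M_{\underline{v}}$ is only an $A$-module (or a $\D_{\underline{0}}$-module, but $\D_{\underline{0}} \neq D(A)$), so the argument does not transfer. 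The paper's strategy — establish finiteness at the level of $\Ass_R M$ using that associated primes of a multigraded module are homogeneous, then restrict scalars — is the device that sidesteps this difficulty.
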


To prove the foregoing theorem we need the following result from \cite{Put}.
\begin{proposition}\label{fini-ass-ringext}
Let $f \colon C \rt B$ be a homomorphism of Noetherian rings. Let $M$ be a $B$-module. Then 
\[\Ass_C M  = \{ P\cap C \mid P \in \Ass_B M \}.\]
In particular, if $\Ass_B M$ is a finite set then so is $\Ass_C M$.	
\end{proposition}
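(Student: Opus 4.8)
The plan is to establish the two inclusions of the displayed equality separately; the forward inclusion will be a one-line annihilator computation, while the reverse inclusion is where the Noetherian hypothesis enters, through the finiteness of the set of minimal primes over an annihilator ideal. Throughout, I would identify $C$ with its image under $f$, so that for a prime $P$ of $B$ the contraction $f^{-1}(P)$ is written $P \cap C$. The basic observation to record first is that for any $m \in M$ one has $\ann_C m = f^{-1}(\ann_B m)$, because $c \cdot m = 0$ precisely when $f(c) \cdot m = 0$.

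For the inclusion ``$\supseteq$'', take $P \in \Ass_B M$ and pick $m \in M$ with $P = \ann_B m$. Then $P \cap C = f^{-1}(P) = \ann_C m$ by the observation above, and since the contraction of a prime ideal along a ring map is again prime, $\ann_C m$ is a prime ideal of $C$ realised as the annihilator of an element of $M$; hence $P \cap C \in \Ass_C M$.

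For the inclusion ``$\subseteq$'', take $\mathfrak{q} \in \Ass_C M$ and write $\mathfrak{q} = \ann_C m$. Set $I := \ann_B m$, so that the cyclic $B$-submodule $Bm \cong B/I$ and $f^{-1}(I) = \mathfrak{q}$. Since $B$ is Noetherian, $B/I$ is a Noetherian $B$-module, so $I$ has only finitely many minimal primes $P_1, \dots, P_n$; each $P_i$ is a minimal prime of $\Supp_B(B/I)$ and hence $P_i \in \Ass_B(B/I) \subseteq \Ass_B M$. Pulling $\sqrt{I} = P_1 \cap \cdots \cap P_n$ back along $f$, and using that contraction commutes with finite intersections and with taking radicals, we obtain $\mathfrak{q} = \sqrt{\mathfrak{q}} = f^{-1}(\sqrt I) = (P_1 \cap C) \cap \cdots \cap (P_n \cap C)$. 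Since $\mathfrak{q}$ is prime and each $P_i \cap C \supseteq \mathfrak{q}$, the usual argument (from $(P_1\cap C)\cdots(P_n\cap C) \subseteq \mathfrak{q}$ and primeness of $\mathfrak{q}$, some factor lies in $\mathfrak{q}$) forces $P_i \cap C = \mathfrak{q}$ for some $i$, exhibiting $\mathfrak{q}$ as $P \cap C$ with $P = P_i \in \Ass_B M$.

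The ``in particular'' clause is then formal: $P \mapsto P \cap C$ is a surjection from $\Ass_B M$ onto $\Ass_C M$, so if the source is finite so is the target. The only genuinely nontrivial point is the reverse inclusion, where one must upgrade the annihilator ideal $I$ of a single element to an actual associated prime of $B/I$ contracting to $\mathfrak{q}$; this is exactly where the Noetherian hypothesis on $B$ is used, namely to make ``finitely many minimal primes over $I$'' available so that the prime-avoidance-type argument applies.
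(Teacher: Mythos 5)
Your proof is correct. The paper does not supply its own argument for this proposition (it simply cites an external reference), so there is no proof in the text to compare against; your argument is the standard one for this fact. The forward inclusion is exactly the annihilator calculation you give, and the reverse inclusion correctly identifies where the Noetherian hypothesis on $B$ enters: $I = \ann_B m$ has finitely many minimal primes, each of which is an associated prime of $B/I \cong Bm \subseteq M$, and contracting $\sqrt I = P_1 \cap \cdots \cap P_n$ gives $\mathfrak q$ as a finite intersection of the $P_i \cap C$, whence equality with some $P_i \cap C$ by primeness of $\mathfrak q$. It is worth noting that your argument never uses the Noetherian hypothesis on $C$; only $B$ Noetherian is needed, so your proof in fact establishes a slightly more general statement than the one quoted.
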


\begin{proof}[Proof of Theorem \ref{multi-ass}]
We first show that under the given hypotheses $\Ass_R \TT(R)$ is finite.
	
If $A$ is a smooth affine algebra over a field then so is $R = A[X_1,\ldots, X_d]$. In this case, if $\mathcal{G}$ is \emph{any} Lyubeznik functor on $Mod(R)$ (not necessarily graded) then $\Ass_R \mathcal{G}(R)$ is finite, see \cite[3.7]{Lyu93}.
	
Next, assume that $A$ is local with maximal ideal $\n$. Let $\M = (\n, X_1,\ldots, X_d)$ be the maximal multihomogeneous ideal of $R$. Since $\TT(R)$ is a multigraded $R$-module, all its associate primes are multihomogeneous (see \cite[1.5.6]{BruHer}). So they are contained in $\M$. Thus we have an isomorphism $\Ass_R(\TT(R)) \xrightarrow{\cong} \Ass_{R_\M}(\TT(R)_\M)$. However,
$\TT(R)_{\M} = \mathcal{G}(R_\M)$ for a Lyubeznik functor $\mathcal{G}(-)$ on $Mod(R_\M)$. As $R_\M$ is a regular local ring so the finiteness of $\Ass_R \TT(R)$ follows from \cite[3.3]{Lyu93}.
	
In view of the above, (1) follows from Proposition \ref{fini-ass-ringext}.

\vspace{0.15cm}	
(2) is an immediate consequence of \eqref{comp_rel}.
\end{proof}

\s {\bf Infinite generation:}

In this subsection, we give a sufficient condition for infinite generation of multigraded components of $H^i_I(R)$. 

\begin{theorem}[with hypothesis as in Theorem \ref{injdim-dim}]\label{non-fg}
Further assume that $A$ is a domain
and $I \cap A \neq 0$. 
If $H^i_I(R)_{\underline{u}} \neq 0$, then $H^i_I(R)_{\underline{u}}$ is not finitely generated as an $A$-module.
\end{theorem}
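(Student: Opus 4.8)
The plan is to reduce everything to the known infinite‑generation statement in the graded (single‑variable) case, which is exactly \cite[Theorem 3.7 or Lemma 10.x]{Put}‑type results, via the Euler‑operator structure. First I would recall that $H^i_I(R)$ is a $\Z^d$‑graded generalized Eulerian $A_{\underline{1}}(A)$‑module, and that by Lemma \ref{Rel-multigrd-grd-genEur} its total module $\Tot(M)$ is a graded generalized Eulerian $A_d(A)$‑module; moreover $M_{\underline{u}}$ is a direct summand (as an $A$‑module) of $\Tot(M)_{|\underline{u}|}$. So if some component $M_{\underline{u}}$ is a nonzero finitely generated $A$‑module it would suffice to derive a contradiction, and it is enough to produce one nonzero non‑finitely‑generated component; by the rigidity isomorphism \eqref{comp_rel}, $M_{\underline{u}}\cong M_{\underline{a}^U}$ for the appropriate $U$, so WLOG we may test the statement on the ``corner'' degrees $\underline{a}^U\in\{0,-1\}^d$.

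Next I would isolate why $I\cap A\neq 0$ forces nontriviality: pick $0\neq a\in I\cap A$. Then $R_a = A_a[X_1,\ldots,X_d]$, and on localizing at $a$ the ideal becomes the unit ideal, so $H^i_{I}(R)_a = H^i_{IR_a}(R_a) = 0$ for $i>0$ and $= R_a$ for $i=0$; in any case, as an $A$‑module $M = H^i_I(R)$ is $a$‑torsion (every element is killed by a power of $a$) — this is the key structural fact. Indeed, since $a\in I$, the {\v C}ech complex on generators of $I$ has all its terms localized at elements divisible by $a$ after one step, so each cohomology module, hence each graded piece $M_{\underline{u}}$, is $a$‑power torsion over $A$. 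Now an $a$‑power‑torsion module over a domain $A$ that is also finitely generated is automatically killed by a single power $a^N$, i.e. it is a finitely generated module over $A/a^N A$, a ring of dimension $<\dim A$.

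The contradiction then comes from comparing injective dimension with this torsion constraint, using the machinery already built: by Theorem \ref{injdim-dim}, $\injdim_A M_{\underline{u}}\le \dim M_{\underline{u}} \le \dim A/aA$; but one shows on the other hand that a nonzero component, being essentially a local cohomology module, cannot have finite injective dimension as an $A$‑module unless it is zero — more precisely I would argue that if $M_{\underline{u}}\neq 0$ is finitely generated over $A$ and $a$‑torsion, then examining $H^j_{\p}(M_{\underline{u}})$ at an associated prime $\p\supseteq aA$ and invoking the Bass‑number identity \eqref{Bass_i_0}, $\mu_j(\p,M_{\underline{u}})=\mu_0(\p,\FF(T)_{\underline{u}})$, one finds nonvanishing Bass numbers in degrees larger than $\dim M_{\underline{u}}$, contradicting finite injective dimension; equivalently one can run the argument of \cite[Lemma 10.1 / Theorem 10.x]{Put} in the graded case on $\Tot(M)$ directly, since $\Tot(M)$ is graded generalized Eulerian and a single nonzero component finitely generated over $A$ would, via the Eulerian relations $(\e_i - u_i)^a\cdot y=0$, propagate finite generation incompatibly with the Grothendieck (non‑)vanishing range.

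I expect the main obstacle to be the last step: making rigorous that a nonzero finitely generated $a$‑torsion $A$‑module cannot occur as a component $M_{\underline{u}}$, i.e. genuinely producing the contradiction rather than merely invoking ``local cohomology is never finitely generated''. The cleanest route is probably to deduce from $a$‑torsion that $\Supp_A M_{\underline{u}}\subseteq V(aA)$, so $\dim M_{\underline{u}}\le \dim A - 1$, hence by Theorem \ref{injdim-dim} $\injdim_A M_{\underline{u}}\le \dim A-1 < \injdim_A A/aA$‑type bound; then combine with the fact that $M_{\underline{u}}$, being a direct summand of a graded local cohomology module of the regular ring $R$ restricted to a fixed degree, must have ``full'' injective dimension equal to $\dim$ of its support when nonzero and finitely generated (because a nonzero finitely generated module of finite injective dimension over a Cohen–Macaulay ring is itself maximal Cohen–Macaulay on its support, forcing it to be free over a regular quotient, which then cannot be $a$‑torsion and a summand of a local cohomology module simultaneously). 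The bookkeeping tying the summand structure of $\Tot(M)$ to $M_{\underline u}$ and to the domain/torsion hypothesis is where care is needed, but each ingredient — rigidity, $\Tot$, Theorems \ref{Bass_fin} and \ref{injdim-dim}, and the $a$‑torsion observation — is already available.
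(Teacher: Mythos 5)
Your overall strategy is close to the paper's, and your key observations are sound: $I\cap A\neq 0$ makes $M_{\underline u}$ an $(I\cap A)$-torsion $A$-module, and Theorem \ref{injdim-dim} gives $\injdim M_{\underline u}\leq\dim M_{\underline u}$. However, the step you flag as the main obstacle is indeed where your argument goes wrong as stated. The parenthetical claim---that a nonzero finitely generated module of finite injective dimension over a Cohen--Macaulay local ring ``is itself maximal Cohen--Macaulay on its support, forcing it to be free over a regular quotient''---is false (e.g.\ over $A=k[[x,y,z]]$ the module $A/(x)\oplus A/(x,y)$ is finitely generated, nonzero, of finite injective dimension, but not Cohen--Macaulay). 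What is actually needed is Bass's theorem (\cite[Theorem 3.1.17]{BruHer}): for a nonzero finitely generated module $N$ over a Noetherian \emph{local} ring $A$ with $\injdim_A N<\infty$, one has $\injdim_A N=\depth A$. Also, you never explicitly reduce to the local case, which is necessary both for Bass's theorem and for the inequality $\dim A/aA\leq\dim A-1$; the paper begins by localizing $A$ at an arbitrary prime and noting that if $(M_{\underline u})_P$ fails to be finitely generated so does $M_{\underline u}$.

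Once you insert Bass's theorem in place of the MCM claim, your route closes cleanly and is a genuine (if minor) variant of the paper's. You argue: $M_{\underline u}$ is $a$-torsion for some $0\neq a\in I\cap A$, so (locally) $\dim M_{\underline u}\leq\dim A-1$, hence $\injdim M_{\underline u}\leq\dim A-1<\dim A=\depth A$, contradicting Bass. The paper instead chains the inequalities the other way: $\dim A=\depth A=\injdim M_{\underline u}\leq\dim M_{\underline u}\leq\dim A$ forces $\dim M_{\underline u}=\dim A$, hence $(0)\in\Ass_A M_{\underline u}$ because $A$ is a domain, producing a torsion-free element that contradicts $(I\cap A)$-torsion. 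Both arguments use exactly the same ingredients (Theorem \ref{injdim-dim}, Bass, Cohen--Macaulayness of the regular local ring $A$, and the torsion observation); the paper's version avoids passing through $\dim A/aA$. Finally, the detours through rigidity, $\Tot(M)$, and Bass-number computations in your first two paragraphs are unnecessary here and can be dropped.
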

\begin{proof}
Let $P$ be a prime ideal in $A$. Note that $R_P=A_P[X_1, \ldots, X_d]$ and $(M_{\underline{u}})_P= H_I^i(R)_{\underline{u}} \otimes _A A_P= H_I^i(R \otimes_A A_P)_{\underline{u}}= H_I^i(R_P)_{\underline{u}}$. Clearly if $H_I^i(R_P)_{\underline{u}}$ is not a finitely generated $A_P$-module, then $M_{\underline{u}}$ is also not a finitely generated $A$-module. Therefore it is enough to prove the result considering $A$ is a local ring.
	
If possible, suppose that $M_{\underline{u}}$ is finitely generated as an $A$-module. From Theorem \ref{multi-injdim-dim} we have $\injdim M_{\underline{u}}\leq \dim M_{\underline{u}}$.
So by \cite[Theorem 3.1.17]{BruHer}, 
$\dim M_{\underline{u}} \leq \injdim M_{\underline{u}}= \depth A$. Since $A$ is \CM, $\depth A=\dim A$. Together we get that 
\[\dim M_{\underline{u}} \leq \dim A= \injdim M_{\underline{u}} \leq \dim M_{\underline{u}}.\] 
Thus $\dim M_{\underline{u}}= \dim A$. 
As $A$ is a domain so the zero ideal is an associated prime of $M_{\underline{u}}$.
Therefore, $M_{\underline{u}}$ has a nonzero torsion-free element. Nevertheless, $M$ is $I$-torsion and hence $M_{\underline{u}}$ is $(I\cap A)$-torsion. Consequently, the hypothesis $I \cap A \neq 0$ leads to a contradiction.
\end{proof}

If $I\subseteq R$ is a usual monomial ideal, then the finite generation of the components of $H^i_I(R)$ is expected to persist. However, the following result says even more about their structure.

\begin{proposition}[With hypotheses as in \ref{sa}]
Let $I$ be a usual monomial ideal. Then $H^i_I(R)_{\underline{u}}$ is a finite free $A$-module.
\end{proposition}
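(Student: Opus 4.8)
The plan is to reduce to the case where $A$ is local (even complete local) via flat base change, then exhibit $H^i_I(R)_{\underline{u}}$ as a component of a local cohomology module of a \emph{usual} monomial ideal over a regular local ring. More precisely, since $R_P = A_P[X_1,\dots,X_d]$ and $H^i_I(R)_{\underline{u}} \otimes_A A_P = H^i_{IR_P}(R_P)_{\underline{u}}$, and since freeness can be checked locally (a finitely generated module over a Noetherian ring is free iff all its localizations at primes are free and the rank is constant — the constant rank will follow from Theorem \ref{multi-rigid} and \eqref{rank-rel}), it suffices to show $H^i_{IR_P}(R_P)_{\underline{u}}$ is free over $A_P$. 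Passing further to the completion $B := \widehat{A_P}$, which is faithfully flat over $A_P$, we may assume $A = k[[Y_1,\dots,Y_m]]$ with $\Lambda = A\langle\delta_1,\dots,\delta_m\rangle$; here $I$ is still a usual monomial ideal in $R = A[X_1,\dots,X_d]$.

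Next I would invoke the $\D$-module machinery already developed: $H^i_I(R)$ is a $\Z^d$-graded generalized Eulerian, holonomic $A_{\underline{1}}(\Lambda)$-module (by Corollary \ref{genEul-LyuFun} applied with $\Lambda = B$, together with the holonomicity results cited from \cite{Put}). Then by Corollary \ref{full-koszul-hom}, the top Koszul homology $\mathcal{V} := H_m(Y_1,\dots,Y_m; H^i_I(R))$ is a holonomic, $\Z^d$-graded generalized Eulerian $A_{\underline{1}}(K)$-module, so by Theorem \ref{hol-mon} each component $\mathcal{V}_{\underline{u}}$ is a finite-dimensional $K$-vector space. The key point is now that, because $I$ is a \emph{usual} monomial ideal (no elements of $A$ enter the generators), the module $H^i_I(R) = H^i_{IR}(R)$ is obtained by base change from $H^i_{I'}(S)$ where $S = K[X_1,\dots,X_d]$ and $I' = I \cap S$ is the monomial ideal in $S$: indeed $R = A \otimes_K S$ and local cohomology commutes with the flat base change $S \to R$, so $H^i_I(R)_{\underline{u}} \cong A \otimes_K H^i_{I'}(S)_{\underline{u}}$. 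Since $H^i_{I'}(S)_{\underline{u}}$ is a finite-dimensional $K$-vector space by the classical result (or Theorem \ref{hol-mon}), $H^i_I(R)_{\underline{u}} \cong A^{\dim_K H^i_{I'}(S)_{\underline{u}}}$ is a finite free $A$-module.

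Assembling: freeness descends from $\widehat{A_P}$ to $A_P$ by faithfully flat descent of the property of being free (for a finitely generated module), and then the local freeness at every prime $P$ together with constancy of rank — which holds because $\dim_K H^i_{I'}(S)_{\underline{u}}$ depends only on $\underline{u}$ and on the combinatorics of $I'$, not on $P$ — gives that $H^i_I(R)_{\underline{u}}$ is a finite free $A$-module globally.

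The step I expect to be the main obstacle is verifying cleanly that for a usual monomial ideal $I$ one genuinely has the base-change identification $H^i_{IR}(R) \cong A \otimes_K H^i_{I\cap S}(S)$ as $\Z^d$-graded modules; this requires noting that $I$ is extended from $I\cap S = (U_1,\dots,U_t)$ (the monomials generating $I$), that $R = A \otimes_K S$ is $S$-flat, and that the \v{C}ech complex computing $H^i_{IR}(R)$ is literally $A \otimes_K$ (the \v{C}ech complex computing $H^i_{I\cap S}(S)$), since the $U_j$ involve only the $X_i$. Once this identification is in place, the finite-dimensionality of the $K$-components of $H^i_{I\cap S}(S)$ (classical, or via Theorem \ref{hol-mon}) immediately yields finite freeness over $A$, and the reduction steps are routine.
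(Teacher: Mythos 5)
Your argument is correct, and its core step --- identifying $R \cong A \otimes_K S$ with $S = K[X_1,\ldots,X_d]$, using flat base change along $S\hookrightarrow R$ to get $H^i_I(R)_{\underline{u}} \cong A \otimes_K H^i_J(S)_{\underline{u}}$ where $J$ is the monomial ideal of $S$ generated by the same monomials, and then invoking $\dim_K H^i_J(S)_{\underline{u}} < \infty$ --- is exactly the paper's proof, and the step you flag as the likely obstacle is actually the whole (and unproblematic) argument. The surrounding scaffold of localizing at primes, completing, invoking $\D$-module machinery, and patching by descent is superfluous, since the base-change isomorphism already works directly over the original $A$ and yields $H^i_I(R)_{\underline{u}}\cong A^r$ in one step; moreover, had you relied on the descent route alone you would not have obtained freeness over an arbitrary Noetherian $A$ (locally free of constant finite rank gives projectivity, not freeness in general), so it is fortunate that the direct argument you arrived at midway makes the reduction unnecessary.
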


\begin{proof}
Suppose that $\{f_1, \ldots, f_t\}$ is a monomial generating set of the ideal $I$. Recall that $A$ contains a field $K$ of characteristic zero. Consider the monomial ideal $J=(f_1, \ldots, f_t)$ in the subring $K[X_1, \ldots, X_d]$ of $R$. Clearly, $S:=K[X_1, \ldots, X_d] \hookrightarrow R$ is a flat extension. It is 
apparent that $H^i_I(R)\cong H^i_{JR}(R) \cong H^i_J(S) \otimes_S R$ and hence $H^i_I(R)_{\underline{u}} \cong H^i_J(S)_{\underline{u}} \otimes_k A$ for every $\underline{u} \in \Z^d$. Since $H^i_J(S)_{\underline{u}}$ is a finite dimensional $k$-vector space (see \cite[Theorem 2.1]{Mus}, \cite{Ter}, or \cite[Corollary 3.3]{Yan2001}), it follows that $H^i_I(R)_{\underline{u}} \cong k^r \otimes_k A \cong A^r$ for some finite $r>0$. Thus $H^i_I(R)_{\underline{u}}$ is a finite free $A$-module.
\end{proof}

\section{Multiplicity}

In this section, $A=K[[Y_1, \ldots, Y_m]]$ is the ring of formal power series in $m$ variables over a field $K$ of characteristic zero. In addition, $R=A[X_1, \ldots, X_d]$ is a polynomial ring over $A$ in $d$ variables. We consider the standard multrigrading on $R$. Let $\Lambda:=D_K(A)=A\langle \delta_1, \ldots, \delta_m\rangle$ be the ring of $K$-linear differential operator on $A$, where $\delta_i=\partial/\partial Y_i$ for $i=1, \ldots, m$. We set $\D:=A_{\underline{1}}(\Lambda),$ the Weyl algebra over $\Lambda$. Let $M=\bigoplus_{{\underline{u}} \in \Z^d} M_{\underline{u}}$ be a $\Z^d$-graded holonomic $\D$-module. Along the line of Theorem \ref{hol-mon}, one can show that $\D_{\underline{0}}=\Lambda[\e_1, \ldots, \e_d]$ and $M_{\underline{u}}$ is a finitely generated $\D_{\underline{0}}$-module for each $\underline{u} \in \Z^d$. 
Finite generation of $M_{\underline{u}}$ as a $\D_{\underline{0}}$-module also follows from \ref{comp_fil}. Consider the filtration $\mathcal{F}=\{\mathcal{F}_\nu\}_{\nu \geq 0}$ of finitely generated $\mathbb{Z}^d$-graded $R$-submodules of $\D$, where 
\[\mathcal{F}_{\nu}:=R \cdot \{\delta^{\underline{a}} \partial^{\underline{b}}: |\underline{a}|+|\underline{b}|\leq \nu\}.\]

\s \label{D-fil} \emph{A filtration on $\D_{\underline{u}}$}. \\
For $\underline{u}\in \Z^d$, we denote the $\underline{u}^{th}$-degree component of $\mathcal{F}_\nu$ by $\mathcal{F}_{\underline{u},\nu}$. Note that 
\begin{align*}
&\mathcal{F}_{\underline{0},0}=R_0=A\\
&\mathcal{F}_{\underline{0},1}=R_0+\sum_{j=1}^m\left(R \cdot \delta_j\right)_0+\sum_{i=1}^d\left(R \cdot \partial_i\right)_0=A+\sum_{j=1}^m A \cdot \delta_j+\sum_{i=1}^d A \cdot \mathcal{E}_i\\
&\mathcal{F}_{\underline{0},2}=A+\sum_{j=1}^m A \cdot \delta_j+\sum_{1\leq j_1, j_2 \leq m} A \cdot \delta_{j_1}\delta_{j_2}+\sum_{i=1}^d A \cdot \mathcal{E}_i+\sum_{1 \leq i_1, i_2 \leq d} A \cdot \mathcal{E}_{i_1}\mathcal{E}_{i_2} + \sum_{\substack{1\leq j \leq m\\ 1 \leq i \leq d}} A \cdot \delta_j \mathcal{E}_i.
\end{align*}
and so on. 

\begin{claim}
For every pairs $\nu, \mu \geq 0$ and $\underline{u}, \underline{v} \in \Z^d$, 
\begin{equation}\label{deg_inc}
\mathcal{F}_{\underline{u},\nu}\mathcal{F}_{\underline{v},\mu} \subseteq \mathcal{F}_{\underline{u}+\underline{v},\nu+\mu}.
\end{equation} 	
\end{claim}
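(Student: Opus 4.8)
The plan is to verify the containment $\mathcal{F}_{\underline{u},\nu}\mathcal{F}_{\underline{v},\mu} \subseteq \mathcal{F}_{\underline{u}+\underline{v},\nu+\mu}$ by first establishing the ungraded statement $\mathcal{F}_\nu \cdot \mathcal{F}_\mu \subseteq \mathcal{F}_{\nu+\mu}$ inside $\D$ and then taking multidegree-$(\underline{u}+\underline{v})$ components. Since $\mathcal{F}_\nu = R \cdot \{\delta^{\underline{a}}\partial^{\underline{b}} : |\underline{a}|+|\underline{b}| \leq \nu\}$ is the $R$-submodule of $\D$ spanned by such monomials, it suffices by $R$-bilinearity to show that a product $\delta^{\underline{a}}\partial^{\underline{b}} \cdot \delta^{\underline{c}}\partial^{\underline{e}}$ with $|\underline{a}|+|\underline{b}| \leq \nu$ and $|\underline{c}|+|\underline{e}| \leq \mu$ lies in $\mathcal{F}_{\nu+\mu}$. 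The obstruction to this being immediate is that $\partial^{\underline{b}}$ does not commute with $\delta^{\underline{c}}$ past the coefficients in $\Lambda \subseteq R$ — more precisely, moving $\partial_i$ past an element of $A$ is fine (they commute), but moving $\partial_i$ past $X_j$ produces lower-order terms via $\partial_i X_j - X_j \partial_i = \delta_{ij}$, and similarly $\delta_i Y_j - Y_j \delta_i = \delta_{ij}$.

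First I would record the basic commutation facts: $\delta_i$ and $\partial_j$ commute with each other for all $i,j$; $\delta_i$ commutes with $X_j$ for all $i,j$; $\partial_i$ commutes with $Y_j$ and with every element of $K$; and the only nontrivial relations are $[\partial_i, X_i] = 1$ and $[\delta_i, Y_i] = 1$. Consequently, for any $r \in R$ and any differential monomial $D$ of order $k$ (meaning $D = \delta^{\underline{a}}\partial^{\underline{b}}$ with $|\underline{a}| + |\underline{b}| = k$), one has $D \cdot r = \sum_\ell r_\ell D_\ell$ where each $D_\ell$ is a differential monomial of order $\leq k$ and each $r_\ell \in R$; this is the standard fact that differentiating ``costs'' at most the order, proven by induction on $k$ using the Leibniz-type identity $\partial_i r = r \partial_i + (\partial_i r)$ and its $\delta$-analogue. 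Granting this, the product $\delta^{\underline{a}}\partial^{\underline{b}} \cdot \delta^{\underline{c}}\partial^{\underline{e}}$: we first commute $\delta^{\underline{c}}$ leftward past $\partial^{\underline{b}}$ freely (they commute), obtaining $\delta^{\underline{a}}\delta^{\underline{c}}\partial^{\underline{b}}\partial^{\underline{e}} = \delta^{\underline{a}+\underline{c}}\partial^{\underline{b}+\underline{e}}$, which is a differential monomial of order $|\underline{a}+\underline{c}| + |\underline{b}+\underline{e}| = (|\underline{a}|+|\underline{b}|) + (|\underline{c}|+|\underline{e}|) \leq \nu + \mu$, hence lies in $\mathcal{F}_{\nu+\mu}$. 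So in fact $\mathcal{F}_\nu \cdot \mathcal{F}_\mu \subseteq \mathcal{F}_{\nu+\mu}$ follows cleanly once the $R$-coefficients are absorbed — the ``moving coefficients'' lemma is needed precisely to handle the general element $r\, \delta^{\underline{a}}\partial^{\underline{b}} \cdot r'\, \delta^{\underline{c}}\partial^{\underline{e}}$, where one must first slide $r'$ to the left past $\delta^{\underline{a}}\partial^{\underline{b}}$, replacing it by an $R$-linear combination of differential monomials of order $\leq |\underline{a}|+|\underline{b}|$ times $r'$-derivatives, without increasing the total order beyond $\nu$.

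Finally I would pass to graded components. Because the filtration pieces $\mathcal{F}_\nu$ are $\Z^d$-graded $R$-submodules of the $\Z^d$-graded ring $\D$ (here $\deg X_i = e_i$, $\deg \partial_i = -e_i$, $\deg Y_j = \deg \delta_j = 0$), the multiplication $\D \times \D \to \D$ respects the grading, so it restricts to $\mathcal{F}_{\underline{u},\nu} \times \mathcal{F}_{\underline{v},\mu} \to (\mathcal{F}_\nu \cdot \mathcal{F}_\mu)_{\underline{u}+\underline{v}} \subseteq (\mathcal{F}_{\nu+\mu})_{\underline{u}+\underline{v}} = \mathcal{F}_{\underline{u}+\underline{v},\nu+\mu}$, which is exactly \eqref{deg_inc}. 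I expect the only genuinely delicate point to be the inductive ``order does not increase under moving coefficients'' lemma; everything else is bookkeeping with the $\N^d$-grading and the free commutation of the $\delta$'s with the $\partial$'s. One should also double-check the edge cases $\nu = 0$ or $\mu = 0$, where $\mathcal{F}_0 = A$ sits in multidegree $\underline{0}$ and the claim reduces to $\mathcal{F}_{\underline{u},\nu}$ being an $A$-module, which is clear.
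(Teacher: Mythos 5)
Your argument is correct and in substance the same as the paper's: both hinge on the fact that a differential monomial of order $k$ commutes past an element of $R$ at the cost of $R$-linear combinations of monomials of order $\leq k$. The paper carries this out directly inside the graded pieces, reducing (since $R_{\underline{u}+\underline{b}}$ is rank-one free over $A$) to coefficients of the form $c_{\underline{b}}X^{\underline{u}+\underline{b}}$ and applying the explicit formulas $\partial_i^a X_i^b = \sum_{j=0}^a b_j X_i^{b-a+j}\partial_i^j$ and the $\delta_j^c Y_j^d$ analogue; you instead establish the ungraded inclusion $\mathcal{F}_\nu \mathcal{F}_\mu \subseteq \mathcal{F}_{\nu+\mu}$ via an abstract moving-coefficients lemma and then restrict to multidegree-$(\underline{u}+\underline{v})$ components, using that each $\mathcal{F}_\nu$ is a $\Z^d$-graded $R$-submodule of $\D$ — a slightly cleaner bookkeeping of the same computation that also avoids having to handle the $\delta_j^c Y_j^d$ case separately. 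One small slip: $\Lambda = A\langle \delta_1, \ldots, \delta_m\rangle$ is not a subring of $R$; you mean that the coefficients one must slide past the operators lie in $R$ (with $A\subseteq R$), which is what your lemma actually uses.
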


Take $\xi \in \mathcal{F}_{\underline{u},\nu}$. Then $\xi=\sum C_{\underline{b}} \delta^{\underline{a}} \partial^{\underline{b}}$ such that $|\underline{a}|+|\underline{b}|\leq \nu$ and $C_{\underline{b}} \in R_{\underline{u}+\underline{b}}$. Similarly, take $\eta=\sum C_{\underline{d}} \delta^{\underline{c}} \partial^{\underline{d}} \in \mathcal{F}_{\underline{v},\mu}$, that is, $|\underline{c}|+|\underline{d}|\leq \mu$ and $C_{\underline{d}} \in R_{\underline{v}+\underline{d}}$. Suppose that  $C_{\underline{b}}=c_{\underline{b}}X^{\underline{u}+\underline{b}}$ and $C_{\underline{d}}=c_{\underline{d}}X^{\underline{v}+\underline{d}}$ for some $c_{\underline{b}}, c_{\underline{d}} \in A$. Note 
{\small
\[\partial_iX_i^b=(1+X_i\partial_i)X_i^{b-1}=X_i^{b-1}+X_i(1+X_i\partial_i)X_i^{b-2}=2X_i^{b-1}+X_i^2\partial_iX_i^{b-2}=\cdots=bX_i^{b-1}+X_i^b\partial_i.\]
}

\noindent
Thus for $b \geq a \geq 0$,
\begin{align*}
\partial_i^aX_i^b&=\partial_i^{a-1}\left(bX_i^{b-1}+X_i^b\partial_i\right)\\
&=b\partial_i^{a-2}\left((b-1)X_i^{b-2}+X_i^{b-1}\partial_i\right)+\partial_i^{a-2}\left(bX_i^{b-1}+X_i^{b}\partial_i\right)\partial_i\\
&=b(b-1)\partial_i^{a-2}X_i^{b-2}+(b+1)\partial_i^{a-2}X_i^{b-1}\partial_i+\partial_i^{a-2}X_i^{b}\partial_i^2\\
&=b(b-1)\partial_i^{a-3}\left((b-2)X_i^{b-3}+X_i^{b-2}\partial_i\right)+(b+1)\partial_i^{a-3}\left((b-1)X_i^{b-2}+X_i^{b-1}\partial_i\right)\partial_i\\
&\hspace{2cm}+\partial_i^{a-3}\left(bX_i^{b-1}+X_i^b\partial_i\right)\partial_i^2\\
&=b(b-1)(b-2)\partial_i^{a-3}X_i^{b-3}+[b(b-1)+(b+1)(b-1)]\partial_i^{a-3}X_i^{b-2}\partial_i\\
&\hspace{2cm}+[(b+1)+b]\partial_i^{a-3}X_i^{b-1}\partial_i^2+\partial_i^{a-3}X_i^{b}\partial_i^3\\
&=\cdots\\
&=\sum_{j=0}^a b_j X_i^{b-a+j}\partial_i^{j},
\end{align*}
where $b_j=f_j(b) \in \Z$. Recall $\delta_j$ commutes with both $X_i, \partial_i$ for $i=1, \ldots, d$ and $j=1, \ldots, m$. Using the same arguments as above we get $\delta_j^cY_j^d=\sum_{k=0}^c d_k Y_j^{d-c+k}\delta_j^{k}$ for $c \leq d$ and $\delta_j^cY_j^d=\sum_{k=0}^d c_k Y_j^{k}\delta_j^{c-d+k}$ for $c \geq d$ for $d_k=f_k(d), c_k=g_k(c)\in \Z$. Therefore, 
\[\left(C_{\underline{b}} \delta^{\underline{a}} \partial^{\underline{b}}\right) \cdot \left(C_{\underline{d}} \delta^{\underline{c}} \partial^{\underline{d}}\right)=\left(c_{\underline{b}}X^{\underline{u}+\underline{b}} \delta^{\underline{a}} \partial^{\underline{b}}\right) \cdot \left(c_{\underline{d}}X^{\underline{v}+\underline{d}} \delta^{\underline{c}} \partial^{\underline{d}}\right)\subset \mathcal{F}_{\underline{u}+\underline{v},\nu+\mu}.\]
The claim follows. 

\vspace{0.2cm}
Fix $\underline{u}\in \Z^d$. Then the inclusions 
\begin{enumerate}
\item $\mathcal{F}_\mu \subset \mathcal{F}_\nu$ for $\mu \leq \nu$,
\item $\mathcal{F}_\mu \mathcal{F}_\nu \subseteq \mathcal{F}_{\mu+\nu}$ for all $\mu, \nu \geq 0$,
\end{enumerate}
of $\Z^d$-graded $R$-modules (where the $\Z^d$-grading on $\mathcal{F}_\mu$ is inherited from $\D$) induces inclusions $\mathcal{F}_{\underline{u},\mu} \subset  \mathcal{F}_{\underline{u},\nu}$ and $\mathcal{F}_{\underline{u},\mu} \mathcal{F}_{\underline{u},\nu} \subseteq \mathcal{F}_{\underline{u},\mu+\nu}$ of finitely generated $R_{\underline{0}}=A$-modules. Moreover, $\D_{\underline{u}}=\bigcup_{\nu \geq 0} \FF_{\underline{u}, \nu}$, since $\D=\bigcup_{\nu \geq 0}\FF_{\nu}$. Thus $\FF_{\underline{u},*}=\{\FF_{\underline{u}, \nu}\}_{\nu \geq 0}$ forms a filtration on $\D_{\underline{u}}$ for every $\underline{u} \in \Z^d$. The associated graded ring corresponding to the filtration $\FF_{\underline{0},*}$ on $\D_{\underline{0}}$ is
\begin{equation}\label{polynomialRing}
\gr_{\FF_{\underline{0},*}}(\D_{\underline{0}})=\FF_{\underline{0},0} \oplus \frac{\FF_{\underline{0},1}}{\FF_{\underline{0},0}} \oplus \frac{\FF_{\underline{0},2}}{\FF_{\underline{0},1}} \oplus \cdots= A[\overline{\delta_1}, \ldots, \overline{\delta_m},\overline{\e_1}, \ldots, \overline{\e_d}],
\end{equation}
a polynomial ring over $A$ in $(m+d)$-variables with $\deg \overline{\delta_i}=1, \deg \overline{\e_j}=1$ for $i=1, \ldots, m$ and $j=1, \ldots, d$.

\s \label{comp_fil} \emph{An induced filtration on the components of $\Z^d$-graded $\D$-modules}. \\
Let $L=\oplus_{\underline{u} \in \Z^d}L_{\underline{u}}$ be a $\Z^d$-graded $\D$-module. A set $\Gamma:=\{\Gamma_{\nu}\}_{\nu \in \Z}$ of 
$R$-submodules of $L$ is called a \emph{$\Z^d$-graded $\mathcal{F}$-compatible filtration} on $L$ if the following holds:
\begin{enumerate}
\item $\Gamma_{\nu}$ is a finitely generated $\Z^d$-graded 
$R$-submodule of $L$ for each $\nu \in \Z$,
\item $\Gamma_\nu \subset \Gamma_{\nu+1}$ for all $\nu \in \Z$,
\item $\mathcal{F}_{\mu}\Gamma_\nu \subset \Gamma_{\mu +\nu}$ for every pair $\mu, \nu \in \Z$,
\item $\Gamma_\nu=0$ for $\nu \ll 0$,
\item $L=\bigcup_{\nu \in \Z} \Gamma_\nu.$
\end{enumerate}
We put $\Gamma(\nu)=\Gamma_\nu/\Gamma_{\nu-1}$ for each $\nu \in \Z$. Observe that $\gr_\Gamma(L)=\oplus_{\nu \in \Z}\Gamma(\nu)$ is a graded $\gr_\FF(\D)$-module. Besides, $\gr_\Gamma(L)$ has an induced $\Z^{d+1}$-graded structure, where for all $\underline{u}\in \Z^d$ and each $\nu \in \Z$,
\[\gr_\Gamma(L)_{\underline{u}, \nu}=\Gamma(\nu)_{\underline{u}} \cong \frac{(\Gamma_\nu)_{\underline{u}}}{(\Gamma_{\nu-1})_{\underline{u}}}.\] 
We say that $\Gamma$ is a good filtration if $\gr_\Gamma(L)$ is a finitely generated $\gr_\FF(\D)$-module. Using the same arguments as in \cite[Proposition 2.6, Proposition 2.7, Chapter 1]{Bjo}, one can show that a $\Z^d$-graded $\D$-module can be equipped with a good filtration if and only if it is finitely generated.

Fix $\underline{u}\in \Z^d$. Given a $\mathcal{F}$-compatible good filtration $\Gamma$ on $L$, we can define a filtration $\Gamma_{\underline{u},*}:=\{\Gamma_{\underline{u}, \nu}\}_{\nu \in \Z}$ on $M_{\underline{u}}$ 
by setting $\Gamma_{\underline{u}, \nu}:=(\Gamma_{\nu})_{\underline{u}}$, the $u^{th}$-degree component of $\Gamma_\nu$. Clearly, $M_{\underline{u}}=\bigcup_{\nu \geq 0} \Gamma_{\underline{u}, \nu}$. As $\Gamma_\nu \subset \Gamma_{\nu+1}$ is an inclusion of finitely generated $\Z^d$-graded $R$-submodules of $M$ so $\Gamma_{\underline{u}, \nu} \subset \Gamma_{\underline{u}, \nu+1}$ is an inclusion of finitely generated $A$-submodules of $M_{\underline{u}}$. Moreover,
\[\FF_{\underline{0}, \mu}\Gamma_{\underline{u}, \nu}=(\FF_\mu)_{\underline{0}}(\Gamma_{\nu})_{\underline{u}}\subset \FF_\mu\Gamma_{\nu} \cap \D_{\underline{0}}M_{\underline{u}} \subset \Gamma_{\mu+\nu} \cap M_{\underline{u}}=(\Gamma_{\mu+\nu})_{\underline{u}}=\Gamma_{\underline{u}, \mu+\nu}.\]
Hence $\Gamma_{\underline{u},*}$ is a $\FF_{\underline{0},*}$-compatible filtration on $M_{\underline{u}}$. Thus  $\gr_{\Gamma_{\underline{u},*}}(L_{\underline{u}})$ is a graded $\gr_{\FF_{\underline{0},*}}(\D_{\underline{0}})$-module. We now rewrite
\[\gr_\Gamma(L)=\bigoplus_{\underline{u}\in \Z^d}\left(\bigoplus_{\nu \in \Z} \Gamma(\nu)_{\underline{u}}\right)\]
and consider it as a $\Z^d$-graded module by setting $ \gr_\Gamma(L)_{\underline{u}}=\bigoplus_{\nu \in \Z} \Gamma(\nu)_{\underline{u}}$ for all $\underline{u}\in \Z^d$. As $\Gamma$ is a $\mathcal{F}$-compatible good filtration on $L$ so $\gr_\Gamma(L)$ is a finitely generated $\Z^d$-graded $\gr_\FF(\D)$-module. Hence $\gr_\Gamma(L)_{\underline{u}}\cong \gr_{\Gamma_{\underline{u},*}}(L_{\underline{u}})$ is a finitely generated $\gr_\FF(\D)_{\underline{0}} \cong \gr_{\FF_{\underline{0},*}}(\D_{\underline{0}})$-module. Thus 
$\Gamma_{\underline{u},*}$ is a $\FF_{\underline{0},*}$-compatible good filtration on $L_{\underline{u}}$. Therefore, using the similar arguments as in \cite[Proposition 2.4, Chapter 1]{Bjo}, we get that $L_{\underline{u}}$ is a finitely generated $\D_{\underline{0}}$-module for each $\underline{u} \in \Z^d$. 

\s\label{dim-mul} \emph{Dimensions and multiplicities}. \\
Let $S =\oplus_{\nu \geq 0}S_\nu$ be a Noetherian graded ring such that $S_0$ is a local ring with maximal ideal $\m_0$. By $\mathfrak{M}=(\m_0, S_+)$ we denote the unique homogeneous maximal ideal in $S$. Let $E$ be a finitely generated graded $S$-module. Then by \cite[Theorem 1.5.8]{BruHer},
\[\dim_S E=\dim_{S_{\mathfrak{M}}} E_{\mathfrak{M}}.\]

Suppose that $L$ is a $\Z^d$-graded $\D$-module and $\Gamma$ is a $\Z^d$-graded $\mathcal{F}$-compatible good filtration on $L$. By \cite[4.6]{Put}, $\gr_{\mathcal{F}}(\D) \cong R[\overline{\delta_1}, \ldots, \overline{\delta_m},\overline{\partial_1}, \ldots, \overline{\partial_d}]$ is a Noetherian ring. 
The finite generation of $\gr_{\mathcal{T}_{\underline{u}, \star}}(L_{\underline{u}})$ as a  $\gr_{\FF_{\underline{u},\star}}(\D_{\underline{0}})$-module obtained in \ref{comp_fil}, allows us to write
\begin{equation}\label{dim}
\dim_{\gr_{\FF_{\underline{u},\star}}(\D_{\underline{0}})} \gr_{\mathcal{T}_{\underline{u}, \star}}(L_{\underline{u}})=\dim_{\gr_{\FF_{\underline{u},\star}}(\D_{\underline{0}})_{\mathfrak{m}}} \gr_{\mathcal{T}_{\underline{u}, \star}}(L_{\underline{u}})_{\mathfrak{m}},
\end{equation}
where $\m=(Y_1, \ldots, Y_m)+(\overline{\delta_1}, \ldots, \overline{\delta_m},\overline{\e_1}, \ldots, \overline{\e_d})$. Fix $\underline{u} \in \Z^d$ and set $N_{\underline{u}}:=\gr_{\mathcal{T}_{\underline{u}, \star}}(L_{\underline{u}})_{\mathfrak{m}}$.
Since $\big(\gr_{\FF_{\underline{u},\star}}(\D_{\underline{0}})_{\mathfrak{m}}, \m\big)$ is a local ring with the residue field $K$, from \cite[Proposition 4.6.2]{BruHer} we get that the \emph{Hilbert Samuel function}
\[\chi_{N_{\underline{u}}}^\m(n)=\sum_{i=0}^n \dim_K \frac{\m^i N_{\underline{u}}}{\m^{i+1} N_{\underline{u}}}=\dim_K \frac{N_{\underline{u}}}{\m^{n+1}N_{\underline{u}}}\]
is of polynomial type of degree $d_M(\underline{u})=\dim_{\gr_{\FF_{\underline{u},\star}}(\D_{\underline{0}})} N_{\underline{u}}$ and the \emph{multiplicity} of $M_{\underline{u}}$ is 
\begin{equation}\label{multiplicity}
\mathrm{e}(M_{\underline{u}})=\mathrm{e}(N_{\underline{u}})=\lim_{n \to\infty} \frac{d(\underline{u})!}{n^{d(\underline{u})}}\chi_{N_{\underline{u}}}^\m(n).
\end{equation}
We call $\mathrm{d}_M(\underline{u})$ as the \emph{Bernstein-type dimension} of $M_{\underline{u}}$ for $\underline{u} \in \Z^d$.

Suppose that $\chi_{N_{\underline{u}}}^\m(n)=P(n)$ for $n \gg 0$, where $P(X)=\sum_{j=0}^{d(\underline{u})}a_j X^j \in \Q[X]$. Then note that $\mathrm{e}(N_{\underline{u}})=d(\underline{u})! a_{d(\underline{u})}$. Along the line of \cite[Lemma 6.2, Chapter 2]{Bjo}, one can further show that the definitions of Dimension and Multiplicity are independent of the choice of $\FF$-compatible good filtration. 

\s Let $M=\oplus_{\underline{u} \in \Z^d}M_{\underline{u}}$ be a finitely generated $\Z^d$-graded $\D$-module. Let $\{m_1, \ldots, m_s\}$ be a finite set of multihomogeneous generators of $M$ as a $\D$-module. 
We put $|m_i|:=\deg m_i$ and set 
\begin{equation}\label{T}
\mathcal{T}_{\nu}:=\mathcal{F}_{\nu}m_1+\cdots+\mathcal{F}_{\nu}m_s.
\end{equation} 
Observe that (i) $\mathcal{T}_\nu=0$ for $\nu <0$, (ii) $\mathcal{T}_0\subset \mathcal{T}_1 \subset \cdots$ is an increasing sequence of finitely generated
$\Z^d$-graded $R$-submodules of $M$, and (iii) $M_{\underline{u}}=\bigcup_{\nu \geq 0} \mathcal{T}_{\underline{u}, \nu}$. 
Since $\mathcal{F}_\mu \mathcal{F}_\nu \subseteq \mathcal{F}_{\mu+\nu}$ for all $\mu, \nu \geq 0$, it follows from \eqref{T} that $\mathcal{T}=\{\mathcal{T}_\nu\}$ is an $\mathcal{F}$-compatible filtration.  
In light of \ref{comp_fil}, we have an induced filtration $\mathcal{T}_{\underline{u}, \star}:=\{\mathcal{T}_{\underline{u}, \nu}\}_{\nu \geq 0}$ on $M_{\underline{u}}$, where 
\begin{equation}\label{fg}
\mathcal{T}_{\underline{u},\nu}:=\mathcal{F}_{\underline{u}-|m_1|,\nu}~m_1+\cdots+\mathcal{F}_{\underline{u}-|m_s|,\nu}~m_s.
\end{equation} 

Fix $\underline{u} \in \Z^d$. Given a pair $\mu, \nu \geq 0$, 
by \eqref{deg_inc}, $\mathcal{F}_{\underline{0},\mu} \mathcal{F}_{\underline{u}-|m_i|,\nu} \subset \mathcal{F}_{\underline{u}-|m_i|,\nu+\mu}$ for $i=1, \ldots, s$ and hence $\mathcal{F}_{\underline{0},\mu} \mathcal{T}_{\underline{u},\nu} \subset \mathcal{T}_{\underline{u},\mu+\nu}$. Therefore, $\mathcal{T}_{\underline{u}, \star}$ is an $\mathcal{F}_{\underline{0}, \star}$-compatible filtration. This statement also follows from \ref{comp_fil}.

In light of the proof of \cite[Proposition 2.7, Chapter 1]{Bjo}, it follows that $\mathcal{T}$ is a good filtration on $M$. So from the discussions in \ref{comp_fil} we get that $\mathcal{T}_{\underline{u}, \star}$ is a good filtration on $M_{\underline{u}}$. For the convenience of the readers, we give a proof here.

\begin{lemma}\label{goodFil}
$\gr_{\mathcal{T}_{\underline{u}, \star}}(M_{\underline{u}})$ is a finitely generated graded $\gr_{\FF_{\underline{0},\star}}(\D_{\underline{0}})$-module.
\end{lemma}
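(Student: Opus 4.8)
The plan is to follow the classical argument of \cite[Proposition~2.7, Chapter~1]{Bjo}: first show that the filtration $\mathcal{T}=\{\mathcal{T}_\nu\}$ of \eqref{T} is a \emph{good} filtration on $M$, and then let the compatibility machinery of \ref{comp_fil} transport the conclusion to the $\underline{u}$-th component. Recall from the paragraph preceding the lemma that $\mathcal{T}$ is an $\mathcal{F}$-compatible filtration on $M$, so that $\gr_{\mathcal{T}}(M)$ is a graded $\gr_{\mathcal{F}}(\D)$-module; moreover $\mathcal{F}_0=R$, whence $\mathcal{T}_0=Rm_1+\cdots+Rm_s$ and each generator $m_i$ lies in $\mathcal{T}_0$.

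\textbf{Step 1 (goodness on $M$).} Write $\overline{m_i}$ for the class of $m_i$ in $\gr_{\mathcal{T}}(M)_0=\mathcal{T}_0$. For fixed $i$ and $\nu$, the assignment sending $\xi$ to the class of $\xi m_i$ in $\mathcal{T}_\nu/\mathcal{T}_{\nu-1}$ kills $\mathcal{F}_{\nu-1}$, since $\mathcal{F}_{\nu-1}m_i\subseteq\mathcal{T}_{\nu-1}$, and hence factors through $\mathcal{F}_\nu/\mathcal{F}_{\nu-1}=\gr_{\mathcal{F}}(\D)_\nu$. Letting $\nu$ and $i$ vary, these assemble into a graded $\gr_{\mathcal{F}}(\D)$-linear map $\bigoplus_{i=1}^s\gr_{\mathcal{F}}(\D)\longrightarrow\gr_{\mathcal{T}}(M)$, $(\xi_i)_i\mapsto\sum_i\xi_i\,\overline{m_i}$. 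It is surjective: given $x\in\mathcal{T}_\nu=\sum_i\mathcal{F}_\nu m_i$, write $x=\sum_i\xi_i m_i$ with $\xi_i\in\mathcal{F}_\nu$; then the tuple of classes $(\overline{\xi_i})_i$ maps onto the class of $x$. Thus $\gr_{\mathcal{T}}(M)$ is a finitely generated $\gr_{\mathcal{F}}(\D)$-module, i.e.\ $\mathcal{T}$ is good.

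\textbf{Step 2 (descent to $M_{\underline{u}}$).} Apply \ref{comp_fil} with $L=M$ and the good $\mathcal{F}$-compatible filtration $\Gamma=\mathcal{T}$: the induced filtration $\mathcal{T}_{\underline{u},\star}$, whose $\nu$-th term is $(\mathcal{T}_\nu)_{\underline{u}}$ — in agreement with \eqref{fg} — is an $\mathcal{F}_{\underline{0},\star}$-compatible good filtration on $M_{\underline{u}}$, and
\[\gr_{\mathcal{T}_{\underline{u},\star}}(M_{\underline{u}})\;\cong\;\gr_{\mathcal{T}}(M)_{\underline{u}}\]
is a finitely generated module over $\gr_{\mathcal{F}}(\D)_{\underline{0}}\cong\gr_{\mathcal{F}_{\underline{0},\star}}(\D_{\underline{0}})$. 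This is exactly the assertion of the lemma. (Alternatively one may argue directly on the component: the $\D_{\underline{0}}$-linear surjection $\bigoplus_{i=1}^s\D_{\underline{u}-|m_i|}\twoheadrightarrow M_{\underline{u}}$, $(\xi_i)\mapsto\sum\xi_i m_i$, carries $\bigoplus_i\mathcal{F}_{\underline{u}-|m_i|,\nu}$ onto $\mathcal{T}_{\underline{u},\nu}$ by \eqref{fg} and hence induces a surjection of associated graded modules; and since $\gr_{\mathcal{F}}(\D)=R[\overline{\delta_1},\dots,\overline{\delta_m},\overline{\partial_1},\dots,\overline{\partial_d}]$ (see \cite[4.6]{Put}) is commutative with $X_i^{b}\overline{\partial_i}^{b}=(X_i\overline{\partial_i})^{b}=\overline{\e_i}^{\,b}\in\gr_{\mathcal{F}}(\D)_{\underline{0}}$, one checks that $\gr_{\mathcal{F}_{\underline{v},\star}}(\D_{\underline{v}})\cong\gr_{\mathcal{F}}(\D)_{\underline{v}}$ is even the \emph{cyclic} $\gr_{\mathcal{F}}(\D)_{\underline{0}}$-module generated by $X^{\underline{v}^{+}}\overline{\partial}^{\underline{v}^{-}}$, where $\underline{v}^{+},\underline{v}^{-}$ are the positive and negative parts of $\underline{v}$; this makes the finiteness transparent.)

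\textbf{Main obstacle.} There is no substantial difficulty: the argument is bookkeeping with filtrations and graded components. The one point deserving care is the identification $\gr_{\mathcal{T}_{\underline{u},\star}}(M_{\underline{u}})\cong\gr_{\mathcal{T}}(M)_{\underline{u}}$ together with $\gr_{\mathcal{F}}(\D)_{\underline{0}}\cong\gr_{\mathcal{F}_{\underline{0},\star}}(\D_{\underline{0}})$ as graded rings acting compatibly, i.e.\ the passage from ``finitely generated over the full graded ring $\gr_{\mathcal{F}}(\D)$, for a $\Z^d$-graded module'' to ``the degree-$\underline{0}$ component is finitely generated over the degree-$\underline{0}$ subring.'' This is precisely the mechanism set up in \ref{comp_fil}, so once Step~1 is in place the lemma follows at once.
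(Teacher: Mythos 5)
Your proposal is correct, and it is logically complete. It takes a slightly different primary route from the paper's own proof, though one the paper itself acknowledges: in the paragraph immediately preceding Lemma \ref{goodFil}, the paper remarks that ``In light of the proof of \cite[Proposition 2.7, Chapter 1]{Bjo}, it follows that $\mathcal{T}$ is a good filtration on $M$. So from the discussions in \ref{comp_fil} we get that $\mathcal{T}_{\underline{u}, \star}$ is a good filtration on $M_{\underline{u}}$.'' That is precisely your Step~1 plus Step~2. The paper then opts instead to give a self-contained, component-level verification ``for the convenience of the readers'': it first constructs the $\gr_{\FF_{\underline{0},\star}}(\D_{\underline{0}})$-action on $\gr_{\mathcal{T}_{\underline{u},\star}}(M_{\underline{u}})$ directly from $\FF_{\underline{0},\star}$-compatibility, and then reads off finite generation from the explicit description \eqref{fg} of $\mathcal{T}_{\underline{u},\nu}$. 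Your parenthetical ``alternatively'' argument is the one that aligns most closely with the paper's actual proof, and it is in fact somewhat more careful on one point: both the paper's direct finite-generation claim and the descent step in \ref{comp_fil} implicitly use that $\gr_{\FF}(\D)_{\underline{v}}$ is a finitely generated $\gr_{\FF}(\D)_{\underline{0}}$-module for each $\underline{v}\in\Z^d$ (otherwise a finitely generated $\Z^d$-graded module over $\gr_{\FF}(\D)$ need not have finitely generated graded components over the degree-$\underline{0}$ subring), and you make this explicit by observing that $\gr_{\FF}(\D)_{\underline{v}}$ is even cyclic over $\gr_{\FF}(\D)_{\underline{0}}$, generated by $X^{\underline{v}^+}\overline{\partial}^{\underline{v}^-}$, because $X_i\overline{\partial_i}=\overline{\e_i}$ lands in the degree-$\underline{0}$ part of the commutative ring $\gr_{\FF}(\D)$. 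So while the two proofs differ in organization (global goodness plus descent versus direct component-level check), they are essentially equivalent in content; if anything your write-up spells out an implicit ingredient more fully than the paper does.
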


\begin{proof} 
We first show $\gr_{\mathcal{T}_{\underline{u}, \star}}(M_{\underline{u}})$ is a graded $\gr_{\FF_{\underline{0},\star}}(\D_{\underline{0}})$-module utilizing the fact that $\mathcal{T}_{\underline{u}, \star}$ is an $\mathcal{F}_{\underline{0}, \star}$-compatible filtration. For $\alpha \in \mathcal{T}_{\underline{u}, \nu}/\mathcal{T}_{\underline{u}, \nu-1}$ and $\beta \in\mathcal{F}_{\underline{0}, \mu}/\mathcal{F}_{\underline{0}, \mu-1}$, choose $m \in \mathcal{T}_{\underline{u}, \nu}$ and $f \in \mathcal{F}_{\underline{0}, \mu}$ such that their respective images are $\overline{m}=\alpha$ and $\overline{f}=\beta$. Since $f \cdot m \in \mathcal{F}_{\underline{0}, \mu} \mathcal{T}_{\underline{u}, \nu} \subset \mathcal{T}_{\underline{u}, \mu+\nu}$, we define $\beta \cdot \alpha$ to be the image of $fm$ in $\mathcal{T}_{\underline{u}, \mu+\nu}/\mathcal{T}_{\underline{u}, \mu+\nu-1}$. One can verify that the action is well-defined by using the same arguments given in \cite[2.3, Chapter 1]{Bjo}. 
This establishes our claim.

In light of \eqref{fg}, the set of images of $m_i$'s in $\mathcal{T}_{\underline{u},1}/\mathcal{T}_{\underline{u},0}$ generates 
$\gr_{\mathcal{T}_{\underline{u}, \star}}(M_{\underline{u}})$ as a $\gr_{\FF_{\underline{u},\star}}(\D_{\underline{0}})$-module. The result follows. 
\end{proof}

\s We devote the rest of this section in analyzing the asymptotic behaviour of Bernstein-type dimensions and multiplicities of $M_{\underline{u}}$, defined in \ref{dim-mul}. 

\vspace{0.2cm}
Fix $i \geq 0$. Observe that the $A$-linear map $M_{\underline{u}} \overset{\cdot X_i}{\lrt} M_{\underline{u}+e_i}$ sends
$\mathcal{T}_{\underline{u}, \nu}=\sum_{j=1}^s\mathcal{F}_{\underline{u}-|m_j|, \nu}~m_j$ to $X_i \cdot \mathcal{T}_{\underline{u}, \nu}= \sum_{j=1}^s X_i \cdot \mathcal{F}_{\underline{u}-|m_j|, \nu}~m_j=\sum_{j=1}^s\mathcal{F}_{\underline{u}-|m_j|, \nu} ~(X_i \cdot m_j)$. Clearly, $X_i \cdot \mathcal{F}_{\underline{u}-|m_j|, \nu} \subseteq \mathcal{F}_{\underline{u}+e_i-|m_j|, \nu}$ and hence $X_i \cdot \mathcal{T}_{\underline{u}, \nu} \subseteq  \mathcal{T}_{\underline{u}+e_i, \nu}$.

\begin{lemma}
	The induced map 
	\begin{equation*}\label{iso-gr-X}
	\gr_{\mathcal{T}_{\underline{u},\star}}(M_{\underline{u}}) \xrightarrow{\cdot X_i} \gr_{\mathcal{T}_{\underline{u}+e_i,\star}}(M_{\underline{u}+e_i})
	\end{equation*}
	is $\gr_{\FF_{\underline{0},\star}}(\D_{\underline{0}})$-linear for $i=1, \ldots, d$.
\end{lemma}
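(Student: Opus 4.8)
The plan is to unwind the definitions on both sides and check that multiplication by $X_i$ respects the associated-graded structure. First I would recall from \eqref{fg} that $\mathcal{T}_{\underline{u},\nu}=\sum_{j=1}^s\mathcal{F}_{\underline{u}-|m_j|,\nu}\,m_j$, and that (as observed in the paragraph preceding the statement) $X_i\cdot\mathcal{F}_{\underline{u}-|m_j|,\nu}\subseteq\mathcal{F}_{\underline{u}+e_i-|m_j|,\nu}$, so that $X_i\cdot\mathcal{T}_{\underline{u},\nu}\subseteq\mathcal{T}_{\underline{u}+e_i,\nu}$. In particular $X_i$ maps $\mathcal{T}_{\underline{u},\nu-1}$ into $\mathcal{T}_{\underline{u}+e_i,\nu-1}$, so there is a well-defined induced $A$-linear map on each graded piece $\mathcal{T}_{\underline{u},\nu}/\mathcal{T}_{\underline{u},\nu-1}\to\mathcal{T}_{\underline{u}+e_i,\nu}/\mathcal{T}_{\underline{u}+e_i,\nu-1}$, and assembling these over $\nu$ gives the map in the statement. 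So the map exists; only $\gr_{\FF_{\underline{0},\star}}(\D_{\underline{0}})$-linearity needs proof.

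Next I would verify the linearity. Take a homogeneous element $\beta\in\mathcal{F}_{\underline{0},\mu}/\mathcal{F}_{\underline{0},\mu-1}$ and $\alpha\in\mathcal{T}_{\underline{u},\nu}/\mathcal{T}_{\underline{u},\nu-1}$, lift them to $f\in\mathcal{F}_{\underline{0},\mu}$ and $m\in\mathcal{T}_{\underline{u},\nu}$ respectively. By the definition of the $\gr_{\FF_{\underline{0},\star}}(\D_{\underline{0}})$-action recalled in the proof of Lemma \ref{goodFil}, $\beta\cdot\alpha$ is the class of $fm$ in $\mathcal{T}_{\underline{u},\mu+\nu}/\mathcal{T}_{\underline{u},\mu+\nu-1}$, and the image of $\beta\cdot\alpha$ under the map is the class of $X_i(fm)$ in $\mathcal{T}_{\underline{u}+e_i,\mu+\nu}/\mathcal{T}_{\underline{u}+e_i,\mu+\nu-1}$. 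On the other hand, the image of $\alpha$ under the map is the class of $X_im$, so $\beta\cdot(\text{image of }\alpha)$ is the class of $f(X_im)$ in $\mathcal{T}_{\underline{u}+e_i,\mu+\nu}/\mathcal{T}_{\underline{u}+e_i,\mu+\nu-1}$. Thus everything reduces to the identity $X_i(fm)=f(X_im)$ in $M_{\underline{u}+e_i}$, which holds because $X_i$ commutes with every element of $\D_{\underline{0}}=\Lambda[\e_1,\ldots,\e_d]$: indeed $X_i$ commutes with $\Lambda=A\langle\delta_1,\ldots,\delta_m\rangle$ and with each Euler operator $\e_j=X_j\partial_j$ (for $j\ne i$ trivially, and $X_i\e_i=\e_iX_i$ was checked in the proof of Theorem \ref{hol-mon}), hence $X_i f=f X_i$ as operators on $M$.

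Finally I would note that one should double-check the zero-degree condition, i.e.\ that $f\in\mathcal{F}_{\underline{0},\mu}$ genuinely lies in $\D_{\underline{0}}$ so that the commutation statement applies; this is immediate since $\mathcal{F}_{\underline{0},\mu}=(\mathcal{F}_\mu)_{\underline{0}}\subset\D_{\underline{0}}$. I do not anticipate a real obstacle here: the content is entirely the commutativity $X_if=fX_i$ for $f\in\D_{\underline{0}}$, and the only care needed is bookkeeping of the filtration degrees and multidegrees, which the inclusions $X_i\cdot\mathcal{F}_{\underline{u}-|m_j|,\nu}\subseteq\mathcal{F}_{\underline{u}+e_i-|m_j|,\nu}$ and the $\FF_{\underline{0},\star}$-compatibility of $\mathcal{T}_{\underline{u},\star}$ established in \ref{comp_fil} already handle. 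The mild subtlety worth a sentence is checking that the lifts $f,m$ can be chosen compatibly and that the resulting class of $X_i(fm)$ is independent of the choices, but this follows from the same argument as the well-definedness of the $\gr$-action in \cite[2.3, Chapter 1]{Bjo}.
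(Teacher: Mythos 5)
Your argument contains a genuine error at the crucial step. You assert that $X_i$ commutes with $\e_i$, citing the computation in the proof of Theorem~\ref{hol-mon}; but that computation shows the \emph{opposite}. Setting $a=0$ there gives $X_i\e_i=(\e_i-1)X_i=\e_i X_i-X_i$, i.e.\ $[X_i,\e_i]=-X_i\neq 0$ (the same relation appears again as \eqref{X-e} in the proof of Lemma~\ref{D_0_a_rel}). Consequently the on-the-nose identity $X_i(fm)=f(X_im)$ on which your proof rests is false whenever $f$ involves $\e_i$: for instance $X_i(\e_i m)=\e_i(X_im)-X_im$.

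This discrepancy is exactly what the lemma is about, and is the reason the statement concerns the \emph{associated graded} modules rather than the filtered modules themselves. The map is still $\gr_{\FF_{\underline{0},\star}}(\D_{\underline{0}})$-linear, but for a different reason: the commutator $[X_i,\e_i]=-X_i$ has filtration degree $0$, one lower than $\e_i$. Concretely, for $\xi\in\mathcal{T}_{\underline{u},\nu}$ one has $X_i\e_i\xi=\e_i X_i\xi - X_i\xi$, and $X_i\xi\in\mathcal{T}_{\underline{u}+e_i,\nu}$, so the error term $-X_i\xi$ vanishes in the quotient $\mathcal{T}_{\underline{u}+e_i,\nu+1}/\mathcal{T}_{\underline{u}+e_i,\nu}$; hence the classes of $X_i\e_i\xi$ and $\e_i X_i\xi$ agree at the associated-graded level. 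To repair your proof, replace the false commutativity claim by this filtration-degree computation, which is how the paper argues. The rest of your write-up is sound: the well-definedness of the induced map, the genuine commutativity of $X_i$ with $Y_k$, $\delta_k$, and $\e_j$ for $j\neq i$, and the implicit reduction to the generators $\overline{\delta_1},\dots,\overline{\delta_m},\overline{\e_1},\dots,\overline{\e_d}$ of $\gr_{\FF_{\underline{0},\star}}(\D_{\underline{0}})$ over $A$.
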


\begin{proof} 
Notice $Y_k X_i=X_i Y_k, \delta_k X_i=X_i \delta_k$ for $k=1, \ldots, m$ and $\e_j X_i=X_i \e_j$ for $j \neq i$. Now $\e_i \mathcal{T}_{\underline{u}, \nu}$ maps to $X_i \cdot \e_i\mathcal{T}_{\underline{u}, \nu}$ and by \eqref{X-e}
\begin{equation}\label{map-rel-Xi}
X_i \cdot \e_i\mathcal{T}_{\underline{u}, \nu}=(\e_i-1)X_i  \mathcal{T}_{\underline{u}, \nu}=\e_i X_i \mathcal{T}_{\underline{u}, \nu}-X_i  \mathcal{T}_{\underline{u}, \nu}.
\end{equation} 
As $\e_i \mathcal{F}_{\underline{u}-|m_j|, \nu} \subseteq \mathcal{F}_{\underline{u}-|m_j|, \nu+1}$ so we have $\e_i \mathcal{T}_{\underline{u}, \nu} \subseteq \mathcal{T}_{\underline{u}, \nu+1}$ for each $\underline{u} \in \Z^d$. In particular, $\e_i X_i \mathcal{T}_{\underline{u}, \nu} \subseteq \mathcal{T}_{\underline{u}+e_i, \nu+1}$, whereas $X_i \mathcal{T}_{\underline{u}, \nu} \subseteq \mathcal{T}_{\underline{u}+e_i, \nu}$. The claim follows from \eqref{map-rel-Xi}.
\end{proof}

Using \eqref{partial-e} and similar arguments as 
above, one can show that the map 
\[\gr_{\mathcal{T}_{\underline{u},\star}}(M_{\underline{u}}) \xrightarrow{\cdot \partial_i} \gr_{\mathcal{T}_{\underline{u}-e_i,\star}}(M_{\underline{u}-e_i})(e_i)\]
induced from the $A$-linear map $M_{\underline{u}} \overset{\cdot \partial_i}{\lrt} M_{\underline{u}-e_i}$ is $\gr_{\FF_{\underline{0},\star}}(\D_{\underline{0}})$-linear for $i=1, \ldots, d$.

\vspace{0.15cm}
For any $\underline{a} \in \Z^d$, set $x_{\underline{a}}=\mathfrak{b}_1 \cdots \mathfrak{b}_d$, where 
\begin{equation*}
\mathfrak{b}_i = \begin{cases}
X_i^{a_i} & \text{if } a_i>0,\\
\partial_i^{-a_i} & \text{if } a_i< 0, \\
1 & \text{if } a_i=0
\end{cases}
\end{equation*}
We put $x_{\underline{a}}^+=\prod_{a_i>0} \mathfrak{b_i}$, and  $x_{\underline{a}}^-=\prod_{a_i<0} \mathfrak{b_i}$. Note that $x_{\underline{a}}$'s form a monomial basis of $\D$. 

\begin{lemma}\label{D_0_a_rel}
	For any $\underline{a} \in \Z^d$,
	\begin{equation}\label{rel-D0}
	\D_{\underline{a}}=x_{\underline{a}}^+ \D_{\underline{0}} x_{\underline{a}}^- \quad \mbox{and} \quad  \D_{\underline{a}}=x_{\underline{a}} \D_{\underline{0}}= \D_{\underline{0}} x_{\underline{a}}
	\end{equation}
\end{lemma}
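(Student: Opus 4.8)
The plan is to establish \eqref{rel-D0} by a direct computation with the monomial basis $\{x_{\underline{a}}\}$ of $\D$, using the $\Z^d$-graded structure. First I would observe that since $\D$ is $\Z^d$-graded with $\deg X_i = e_i$, $\deg \partial_i = -e_i$, and $\deg z = 0$ for $z \in \Lambda$, the element $x_{\underline{a}}$ is multihomogeneous of degree $\underline{a}$, and likewise $x_{\underline{a}}^+$ has degree equal to the positive part of $\underline{a}$ and $x_{\underline{a}}^-$ has degree equal to the negative part. Hence every word in $x_{\underline{a}}^+ \D_{\underline{0}} x_{\underline{a}}^-$ lies in $\D_{\underline{a}}$, and similarly for $x_{\underline{a}} \D_{\underline{0}}$ and $\D_{\underline{0}} x_{\underline{a}}$; so the containments ``$\supseteq$'' are immediate from degree considerations. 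The content of the lemma is the reverse containments.

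For the reverse containments I would argue as follows. A general multihomogeneous element of $\D_{\underline{a}}$ is an $\Lambda$-linear combination of monomials $\underline{X}^{\underline{b}}\underline{\partial}^{\underline{c}}$ with $\underline{b} - \underline{c} = \underline{a}$. Fix such a monomial. Working one variable index $i$ at a time (the factors for distinct $i$ commute, and $\Lambda$ is central among these), I would show $X_i^{b_i}\partial_i^{c_i}$ can be rewritten, using the commutation relations $\partial_i X_i = X_i\partial_i + 1$ derived in \ref{D-fil} (so that $X_i^j\partial_i^j$ is a polynomial in $\e_i$ with $\Z$-coefficients, as computed in Theorem \ref{hol-mon}), as $\mathfrak{b}_i \cdot p_i(\e_i)$ where $\mathfrak{b}_i$ is the $i$-th factor of $x_{\underline{a}}$ and $p_i(\e_i) \in \D_{\underline{0}}$: namely if $b_i \ge c_i$ one factors $X_i^{b_i}\partial_i^{c_i} = X_i^{b_i-c_i}(X_i^{c_i}\partial_i^{c_i})$, and if $b_i < c_i$ one factors it as $(X_i^{b_i}\partial_i^{b_i})\partial_i^{c_i-b_i}$ after pushing the $\e_i$-polynomial past the $\partial_i$'s via $\e_i \partial_i = \partial_i(\e_i - 1)$; either way one lands in $\mathfrak{b}_i \D_{\underline{0}}$, and symmetrically in $\D_{\underline{0}}\mathfrak{b}_i$. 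Multiplying over all $i$ and using that the $\mathfrak{b}_i$ commute with the $\e_j$-polynomials for $j \ne i$, this yields $\underline{X}^{\underline{b}}\underline{\partial}^{\underline{c}} \in x_{\underline{a}}\D_{\underline{0}} = \D_{\underline{0}}x_{\underline{a}}$, and rearranging the positive-index factors to the left and negative-index ones to the right gives membership in $x_{\underline{a}}^+\D_{\underline{0}}x_{\underline{a}}^-$.

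To finish, I would note that the identity $\D_{\underline{a}} = x_{\underline{a}}\D_{\underline{0}}$ combined with its mirror $\D_{\underline{a}} = \D_{\underline{0}}x_{\underline{a}}$ follows symmetrically, and the equality $x_{\underline{a}}^+\D_{\underline{0}}x_{\underline{a}}^- = x_{\underline{a}}\D_{\underline{0}}$ follows because the ``negative'' factors $x_{\underline{a}}^-$ can be commuted through $\D_{\underline{0}}$ up to replacing $\D_{\underline{0}}$ by itself (using $\partial_i \e_j = \e_j \partial_i$ for $i \ne j$ and $\partial_i \e_i = (\e_i - 1)\partial_i$, so $\partial_i \D_{\underline{0}} \subseteq \D_{\underline{0}}\partial_i$ and vice versa), and $\D_{\underline{0}}$ is generated over $\Lambda$ by the commuting operators $\e_1, \dots, \e_d$ as established in Theorem \ref{hol-mon}. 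The main obstacle I anticipate is purely bookkeeping: keeping track of the $\Z$-coefficient polynomials in $\e_i$ produced by the commutation relations and verifying they genuinely stay inside $\D_{\underline{0}} = \Lambda[\e_1, \dots, \e_d]$ while sliding past the remaining $X_j$ and $\partial_j$ factors — there is no conceptual difficulty beyond the commutator identities already recorded in the excerpt, but the indexing must be handled with care to make the factorizations $X_i^{b_i}\partial_i^{c_i} \in \mathfrak{b}_i\D_{\underline{0}}$ precise in both the $b_i \ge c_i$ and $b_i < c_i$ regimes.
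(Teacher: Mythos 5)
Your proof is correct and follows essentially the same route as the paper: both decompose a multihomogeneous element of $\D_{\underline{a}}$ in the monomial basis $X^{\underline{b}}\partial^{\underline{c}}$, factor out the surplus $X_i$'s (resp.\ $\partial_i$'s) so that the remaining balanced piece $X_i^{\min(b_i,c_i)}\partial_i^{\min(b_i,c_i)}$ is identified as a polynomial in $\e_i$, and then use the commutation identities $\e_i X_i^j = X_i^j(\e_i+j)$, $\e_i\partial_i^j = \partial_i^j(\e_i-j)$ to move the $\D_{\underline{0}}$-factor through $x_{\underline{a}}^{\pm}$, proving all the claimed equalities. The only cosmetic difference is that the paper closes with the single chain $\D_{\underline{u}} \subseteq x_{\underline{u}}^+ \D_{\underline{0}} x_{\underline{u}}^- \subseteq \D_{\underline{0}} x_{\underline{u}} \subseteq \D_{\underline{u}}$ rather than proving each reverse containment separately.
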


\begin{proof}
	Note that $\D_{\underline{u}}=\{c_{\underline{a}, \underline{b}} X^{\underline{a}} \partial^{\underline{b}} \mid c_{\underline{a}, \underline{b}} \in \Lambda, ~a_i - b_i=u_i \mbox{ for al } i=1, \ldots, r\}$. Let $U, V$ be two nonempty subsets of $\mathcal{S}$ such that $U \cup V=\mathcal{S}$ with $U \cap V =\phi$ and \[u_i=
	\begin{cases}
	v_i & \mbox{ if } i \in U\\
	-v_i & \mbox{ if } i \in V
	\end{cases}\]
	for some $\underline{v} \in \N^d$. Therefore, $a_i=v_i+b_i$ if $i \in U$ and $b_j=a_j+v_j$ if $j \in V$. Note that for any $i \in U$ and $j \in V$,	
	\begin{align*}
	&X^{\underline{a}} \partial^{\underline{b}}=X_1^{a_1} \dots X_{i-1}^{a_{i-1}}X_i^{v_i+b_i}X_{i+1}^{a_{i+1}} \cdots X_d^{a_d}\partial^{\underline{b}}=X_i^{v_i}\left(X_1^{a_1} \dots X_{i-1}^{a_{i-1}}X_i^{b_i}X_{i+1}^{a_{i+1}} \cdots X_d^{a_d}\partial^{\underline{b}}\right),\\
	\mbox{and } &X^{\underline{a}} \partial^{\underline{b}}=X^{\underline{a}}\partial_1^{b_1} \dots \partial_{j-1}^{b_{j-1}}\partial_j^{a_j+v_j}\partial_{j+1}^{b_{j+1}} \cdots \partial_d^{b_d}=\left(X^{\underline{a}}\partial_1^{b_1} \dots \partial_{j-1}^{b_{j-1}}\partial_j^{a_j}\partial_{j+1}^{b_{j+1}} \cdots \partial_d^{b_d}\right)\partial_j^{v_j}.
	\end{align*}
	Repeating this we get that 
	\[X^{\underline{a}} \partial^{\underline{b}}=\prod_{i \in U} X_i^{v_i} \prod_{i \in U} X_i^{b_i} \prod_{j \in V} X_j^{a_j}  \prod_{j \in V} \partial_j^{a_j} \prod_{i \in U} \partial_i^{b_i} \prod_{j \in V} \partial_j^{v_j} \in x_{\underline{u}}^+ \D_{\underline{0}} x_{\underline{u}}^-,\]
	as $\prod_{i \in U} X_i^{v_i}= x_{\underline{u}}^+$ and $\prod_{j \in V} \partial_j^{v_j}=x_{\underline{u}}^-$. Notice $\e_i$ commutes with $X_j, \partial_j$ for each $j \neq i$.
	
	\vspace{0.15cm}
	Take $i \in \{1, \ldots, d\}$ and fix it. Recall that $\partial_iX_i=1+X_i \partial_i$. Now
	\begin{align}\label{X-e}
     \begin{split}
	X_i^j \e_i=&X_i^{j}(X_i \partial_i)\\
	=&X_i^j(\partial_iX_i-1)\\
	=&X_i^{j-1}(X_i \partial_i)X_i-X_i^j\\
	=&\left(X_i^{j-2}(X_i\partial_i)X_i-X_i^{j-1}\right)X_i-X_i^j\\
	=&X_i^{j-2}(X_i\partial_i)X_i^2-2X_i^j\\
	&\vdots\\
	=&(X_i\partial_i)X_i^j-jX_i^j\\
	=&(\e_i-j)X_i^j.
	\end{split}
	\end{align} 
	From the above we also get that $\e_i X_i^j=X_i^j\e_i+X_i^j=X_i^j(\e_i+j)$. Hence 
 $\e_i^wX_i^u=\e_i^{w-1}(\e_i X_i^u)= \e_i^{w-1}X_i^u(\e_i+u)=\e_i^{w-2}X_i^u(\e_i+u)^2= \cdots= X_i^u (\e_i+u)^w$ for $u, v \geq 1$. Thus  \begin{align}\label{e-X-multi}
 \begin{split}
 \e^{\underline{w}} X_i^u&=\e_1^{w_1} \cdots \e_{i-1}^{w_{i-1}} \left(\e_i^{w_i}X_i^u\right)\e_{i+1}^{w_{i+1}}\cdots \e_d^{w_d}\\
 &=X_i^u\e_1^{w_1} \cdots \e_{i-1}^{w_{i-1}} (\e_i+u)^{w_i}\e_{i+1}^{w_{i+1}}\cdots \e_d^{w_d}
  \end{split}
  \end{align}
for each $\underline{w} \in \Z^d$ and $u \geq 1$. Again 
	\begin{align}\label{partial-e}
	\begin{split}
	\e_i\partial_i^j=&(X_i \partial_i)\partial_i^j\\
	=&(\partial_iX_i-1)\partial_i^j\\
	=&\partial_i(X_i \partial_i)\partial_i^{j-1}-\partial_i^j\\
	=&\partial_i\left(\partial_i(X_i\partial_i)\partial_i^{j-2}-\partial_i^{j-1}\right)-\partial_i^j\\
	=&\partial_i^2(X_i\partial_i)\partial_i^{j-2}-2\partial_i^j\\
	&\vdots\\
	=&\partial_i^j(X_i\partial_i)-j\partial_i^j\\
	=&\partial_i^j(\e_i-j).
	\end{split}
	\end{align} 
	Therefore, $\partial_i^j\e_i =\e_i\partial_i^j+X_i^j=(\e_i+j)\partial_i^j$. Hence
	 \begin{align}\label{e-partial-multi}
	\begin{split}
	\e^{\underline{w}} \partial_i^u&=\e_1^{w_1} \cdots \e_{i-1}^{w_{i-1}} \left(\e_i^{w_i}\partial_i^u\right)\e_{i+1}^{w_{i+1}}\cdots \e_d^{w_d}\\
	&=\partial_i^u\e_1^{w_1} \cdots \e_{i-1}^{w_{i-1}} (\e_i-u)^{w_i}\e_{i+1}^{w_{i+1}}\cdots \e_d^{w_d}
	\end{split}
	\end{align}
for each $\underline{w} \in \Z^d$ and $u \geq 1$. Fix $\underline{u} \in \Z^d$. Take $b \in x_{\underline{u}}^+ \D_{\underline{0}} x_{\underline{u}}^-$. Then $b=x_{\underline{u}}^+ f(\e_1, \ldots, \e_d) x_{\underline{u}}^-$ for some $f \in \D_{\underline{0}}=\Lambda[\e_1, \ldots, \e_d]$. In light of \eqref{e-X-multi}, $b=f(\e'_1, \ldots, \e'_d) x_{\underline{a}} \in \D_{\underline{0}} x_{\underline{u}}$, where

	\[\e'_i=
	\begin{cases}
	\e_i-u_i & \mbox{ if } i \in U\\
	\e_i & \mbox{ if } i \in V.
	\end{cases}\]
	Clearly, $\D_{\underline{u}} \subseteq x_{\underline{u}}^+ \D_{\underline{0}} x_{\underline{u}}^- \subseteq \D_{\underline{0}} x_{\underline{u}} \subseteq \D_{\underline{0}} \D_{\underline{u}} \subseteq \D_{\underline{u}}$ and hence $\D_{\underline{u}} = x_{\underline{u}}^+ \D_{\underline{0}} x_{\underline{u}}^- = \D_{\underline{0}} x_{\underline{u}}$. 
	
	Similarly, for any
	$b'\in x_{\underline{u}}^+ \D_{\underline{0}} x_{\underline{u}}^-$, let $b'=x_{\underline{u}}^+ g(\e_1, \ldots, \e_d) x_{\underline{u}}^-$ for some $g \in \D_{\underline{0}}$. In view of \eqref{e-partial-multi}, $b'=x_{\underline{u}} g(\e''_1, \ldots, \e''_d) \in x_{\underline{u}} \D_{\underline{0}}$, where
	\[\e''_i=
	\begin{cases}
	\e_i & \mbox{ if } i \in U\\
	\e_i-u_i & \mbox{ if } i \in V.
	\end{cases}\]
	The result follows.
\end{proof}

\begin{remark}
The left and right module structure of $\D_{\underline{u}}$ as $\D_{\underline{0}}$-modules are different. In particular, take $\underline{u}=e_1 \in \Z^d$. If $\e_1\e_2 X_1=X_1\e_1\e_2$ then $(\e_1+1) \e_2=\e_1\e_2$, as $\D_{\underline{0}}[X_1, \ldots, X_d] \subset \D$ is a domain. Thus $\e_2=0$, a contradiction. 
\end{remark}

Recall that a holonomic $A_{\underline{1}}(K)$-module is cyclic, see \cite[1.8.19]{Bjo}. We prove the following. 
\begin{lemma}\label{cyclic}
Let $L$ be a cyclic $\D$-module. Then $L_{\underline{u}}$ is a cyclic $\D_{\underline{0}}$-module. 
\end{lemma}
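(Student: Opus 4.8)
The plan is to reduce to the case of a multihomogeneous cyclic generator and then read the statement off Lemma~\ref{D_0_a_rel}. First I would fix a generator $m$ of $L$ as a $\D$-module; since $L$ is $\Z^d$-graded, I will work with a multihomogeneous $m$, say $\deg m=\underline{w}\in\Z^d$ (in the situations where this lemma is applied---$L$ holonomic and $\Z^d$-graded---a multihomogeneous cyclic generator is available, e.g.\ by running the standard cyclicity argument for holonomic modules inside the graded category).

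Granting this, the proof is essentially a degree count. Since $L=\D m=\bigoplus_{\underline{a}\in\Z^d}\D_{\underline{a}}m$, for every $\underline{u}\in\Z^d$ one has $L_{\underline{u}}=\D_{\underline{u}-\underline{w}}\,m$. By Lemma~\ref{D_0_a_rel}, $\D_{\underline{u}-\underline{w}}=\D_{\underline{0}}\,x_{\underline{u}-\underline{w}}$, so
\[
L_{\underline{u}}=\D_{\underline{0}}\,x_{\underline{u}-\underline{w}}\,m=\D_{\underline{0}}\cdot\bigl(x_{\underline{u}-\underline{w}}\,m\bigr).
\]
Since $x_{\underline{u}-\underline{w}}$ has multidegree $\underline{u}-\underline{w}$, the element $x_{\underline{u}-\underline{w}}\,m$ lies in $L_{\underline{u}}$, and the displayed identity shows it generates $L_{\underline{u}}$ over $\D_{\underline{0}}$; thus $L_{\underline{u}}$ is cyclic, with an explicit generator.

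The one point requiring care is the reduction to a multihomogeneous generator. For a general cyclic presentation $L=\D m$ with $m=\sum_{j=1}^{k}m_{\underline{v}_j}$ its multihomogeneous decomposition, a short degree-by-degree check gives $L=\sum_{j=1}^{k}\D m_{\underline{v}_j}$, and hence $L_{\underline{u}}=\sum_{j=1}^{k}\D_{\underline{0}}\bigl(x_{\underline{u}-\underline{v}_j}\,m_{\underline{v}_j}\bigr)$, a sum over the components of $m$; collapsing this to a single $\D_{\underline{0}}$-generator of $L_{\underline{u}}$ is exactly the content of having a multihomogeneous cyclic generator for $L$, which is the only input beyond Lemma~\ref{D_0_a_rel}. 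I expect this to be the main---and essentially the only nontrivial---step; the degree bookkeeping and the rewriting $\D_{\underline{u}-\underline{w}}=\D_{\underline{0}}x_{\underline{u}-\underline{w}}$ are immediate.
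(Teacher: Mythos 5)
Your proposal takes essentially the same route as the paper: pick a multihomogeneous cyclic generator $y$ of degree $\underline{w}$, note $L_{\underline{u}}=\D_{\underline{u}-\underline{w}}\,y$, and apply Lemma~\ref{D_0_a_rel} to rewrite this as $\D_{\underline{0}}\,x_{\underline{u}-\underline{w}}\,y$. The paper's proof is just the two lines ``Let $L=\D y$ for some homogeneous element $y$\dots'' followed by that identity, silently assuming a multihomogeneous cyclic generator exists; you are more careful in flagging this as the one genuine assumption, and you correctly observe that it holds in the situations where the lemma is actually used (e.g.\ for the simple $\Z^d$-graded subquotients $N_i/N_{i-1}$ in the Note following Lemma~\ref{finLength-dim-mul}, any nonzero homogeneous element is a generator). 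So no gap relative to the paper; if anything, your version makes an implicit hypothesis of the paper's proof explicit.
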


\begin{proof}
	Let $L =\D y$ for some homogeneous element $y$ in $L$ with $\deg y=\underline{a}$. In view of equation \eqref{rel-D0} it follows that 
	\[L_{\underline{u}}=\D_{\underline{u}-\underline{a}}y= \D_{\underline{0}} x_{\underline{u}-\underline{a}}y\]
	for all $\underline{u} \in \Z^d$.
\end{proof}

\begin{theorem}\label{simple}
Let $M$ be a simple $\D$-module. Fix $\underline{u}, \underline{v} \in \Z^d$. If both $M_{\underline{u}}$ and $M_{\underline{v}}$ are nonzero, then
	\[\ee(M_{\underline{u}})=\ee(M_{\underline{v}}) \quad \mbox{ and } \quad \mathrm{d}_M(\underline{u})=\mathrm{d}_M(\underline{v}).\]
\end{theorem}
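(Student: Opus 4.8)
The plan is to reduce the statement about a general pair $\underline{u}, \underline{v}$ to a statement about adjacent degrees $\underline{u}$ and $\underline{u} \pm e_i$, and then to use the maps $\cdot X_i$ and $\cdot \partial_i$ on the associated graded modules established in the two lemmas preceding this theorem. First I would recall that $M$, being simple, is in particular holonomic and hence cyclic by \cite[1.8.19]{Bjo}; by Lemma \ref{cyclic}, every component $M_{\underline{u}}$ is a cyclic $\D_{\underline{0}}$-module. This already pins down the shape of the $\FF$-compatible good filtration $\mathcal{T}_{\underline{u},\star}$ on $M_{\underline{u}}$: taking the single generator of $M$, the filtration $\mathcal{T}_{\underline{u},\nu} = \FF_{\underline{u}-\underline{a},\nu}\,x_{\underline{u}-\underline{a}}\,y$ is good by Lemma \ref{goodFil}.

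The core of the argument is to show that for a fixed $i$, if $M_{\underline{u}} \neq 0$ and $M_{\underline{u}+e_i} \neq 0$, then $\ee(M_{\underline{u}}) = \ee(M_{\underline{u}+e_i})$ and $\mathrm{d}_M(\underline{u}) = \mathrm{d}_M(\underline{u}+e_i)$. For this I would use the two $\gr_{\FF_{\underline{0},\star}}(\D_{\underline{0}})$-linear maps
\[
\gr_{\mathcal{T}_{\underline{u},\star}}(M_{\underline{u}}) \xrightarrow{\cdot X_i} \gr_{\mathcal{T}_{\underline{u}+e_i,\star}}(M_{\underline{u}+e_i}), \qquad
\gr_{\mathcal{T}_{\underline{u}+e_i,\star}}(M_{\underline{u}+e_i}) \xrightarrow{\cdot \partial_i} \gr_{\mathcal{T}_{\underline{u},\star}}(M_{\underline{u}})(e_i)
\]
from the two lemmas above. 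Since $M$ is simple, the $\D$-linear maps $M_{\underline{u}} \xrightarrow{\cdot X_i} M_{\underline{u}+e_i}$ extend to $\D$-endomorphisms (really: maps of the shifted $\D$-module) whose kernels and cokernels are $\D$-submodules/quotients; I would argue that $\partial_i X_i$ and $X_i \partial_i$ act on the relevant components and, because $M$ is simple, the composite $M_{\underline{u}} \xrightarrow{\cdot X_i} M_{\underline{u}+e_i} \xrightarrow{\cdot \partial_i} M_{\underline{u}}$ is, up to the Euler scalar $\e_i - (\text{const})$, essentially a bijection unless it is zero — and it cannot be identically zero on a nonzero component of a simple module whose $(\underline{u}+e_i)$-component is also nonzero, by the rigidity-type considerations in Propositions \ref{genEur-XY} and \ref{genEur-partial} (more precisely: $H_j(X_i;M)$ and $H_j(\partial_i;M)$ are supported in degrees with $i$-th coordinate $0$ resp. $-1$, so away from those hyperplanes the multiplication maps are isomorphisms). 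Passing to associated graded modules, an isomorphism of filtered modules (up to a degree shift, which does not affect dimension or multiplicity) induces an isomorphism of the $\gr_{\FF_{\underline{0},\star}}(\D_{\underline{0}})$-modules after localizing at $\m$, hence equal Hilbert–Samuel polynomials, hence equal $\mathrm{d}_M$ and $\ee$.

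Once adjacency is handled, I would finish by connecting any two degrees $\underline{u}, \underline{v}$ with both components nonzero through a path in $\Z^d$. The subtlety is that the naive straight-line path between $\underline{u}$ and $\underline{v}$ may pass through a degree where the component vanishes (e.g. crossing a coordinate hyperplane). Here I would invoke the block structure from Section 3: by \eqref{comp_rel} applied to the simple — hence generalized Eulerian, via Corollary \ref{genEul-LyuFun}/the relevant earlier results — module $M$, the component $M_{\underline{w}}$ depends up to $\D_{\underline{0}}$-isomorphism only on the block $\mathcal{B}(\underline{a}^U)$ containing $\underline{w}$, and one can choose a path that stays inside a single block between consecutive lattice points, moving only in directions that keep the sign pattern of coordinates fixed; along such steps the multiplication maps $\cdot X_i$ (when $u_i \geq 1$) and $\cdot \partial_i$ (when $u_i \leq -2$) are genuine isomorphisms of $A$-modules, and in fact of filtered $\D_{\underline{0}}$-modules up to shift, so dimension and multiplicity are constant. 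Since $\Z^d$ is the union of the blocks and Theorem \ref{multi-rigid} guarantees $M_{\underline{w}} \neq 0$ on a whole block as soon as it is nonzero somewhere on it, the set of degrees with $M_{\underline{w}} \neq 0$ is a union of full blocks, and within each block the invariants are constant; the adjacency argument across neighboring blocks (differing in one coordinate's sign) then shows the invariants agree across all such blocks, giving the claim. The main obstacle I anticipate is making the ``isomorphism up to shift of filtered modules implies equal multiplicity'' step fully rigorous in this $\Z^{d+1}$-graded setting — i.e. checking that the induced map on $\gr$ is not merely linear but an isomorphism after localization at $\m$ — which requires tracking the Euler-operator scalars carefully and using that they become units in $\gr_{\FF_{\underline{0},\star}}(\D_{\underline{0}})_{\m}$ only after one accounts for the nilpotency coming from the generalized Eulerian condition; the cleanest route may be to bypass exactness and instead compare Hilbert–Samuel functions directly via the sandwiching inequalities $\mathcal{T}_{\underline{u},\nu} \hookrightarrow \mathcal{T}_{\underline{u}+e_i,\nu} \hookrightarrow \mathcal{T}_{\underline{u},\nu+c}$ for a uniform constant $c$ coming from the Euler relations.
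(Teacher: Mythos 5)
Your proposal takes a genuinely different route from the paper, and it has a real gap.

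The paper's proof is direct and global: since $M$ is simple and $y \in M_{\underline{u}}$, $z \in M_{\underline{v}}$ are nonzero, we immediately get $M = \D y = \D z$, hence $M_{\underline{u}} = \D_{\underline{0}} y$, $M_{\underline{v}} = \D_{\underline{0}} z$, and there exist $\beta \in \D_{\underline{v}-\underline{u}}$, $\alpha \in \D_{\underline{u}-\underline{v}}$ with $z = \beta y$, $y = \alpha z = \alpha\beta y$. Writing $\alpha = s_0 x_{\underline{u}-\underline{v}}$, $\beta = s'_0 x_{\underline{v}-\underline{u}}$ via Lemma \ref{D_0_a_rel} and using the commutation relation \eqref{rel-D0}, the multiplications $\mu_\alpha$, $\mu_\beta$ are $\D_{\underline{0}}$-linear and filtration-preserving (taking $\mathcal{T}_{\underline{u},\nu}=\FF_{\underline{0},\nu}y$ and $\mathcal{T}_{\underline{v},\nu}=\FF_{\underline{0},\nu}z$), and $\mu_\alpha\circ\mu_\beta = \mathrm{id}$ gives inverse isomorphisms of the associated graded modules. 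No adjacency, paths, or blocks are needed.

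The gap in your argument is that the hypothesis of Theorem \ref{simple} is only that $M$ is a simple $\D$-module; there is no generalized Eulerian assumption. All the machinery you invoke --- Propositions \ref{genEur-XY} and \ref{genEur-partial}, Theorem \ref{multi-rigid}, and the block isomorphisms \eqref{comp_rel} --- requires $M$ to be $\Z^d$-graded generalized Eulerian. This is essential to respect, because Theorem \ref{simple} is applied in Lemma \ref{finLength-dim-mul} to the simple composition factors $N_i/N_{i-1}$ of a finite-length module, and these subquotients are not of the form $\mathcal{T}(R)$ for a Lyubeznik functor, so Corollary \ref{genEul-LyuFun} gives you nothing. (Proposition \ref{exact} preserves the generalized Eulerian property under subquotients, but that still only applies if the ambient $M$ is $A_{\underline{1}}(A)$-generalized Eulerian, which is not part of the hypothesis here; moreover the theorem as stated concerns $\D$-modules over $\D = A_{\underline{1}}(\Lambda)$.) Without that, your key claims --- that $\cdot X_i$ and $\cdot \partial_i$ are isomorphisms away from the coordinate hyperplanes, that the set of nonzero degrees is a union of blocks, that the composite $\partial_i\circ X_i$ is ``essentially a bijection'' because of the Euler scalar --- are unsupported. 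Your instinct to ``compare Hilbert--Samuel functions directly via sandwiching $\mathcal{T}_{\underline{u},\nu}\hookrightarrow\mathcal{T}_{\underline{u}+e_i,\nu}\hookrightarrow\mathcal{T}_{\underline{u},\nu+c}$'' is closer in spirit to what actually works, but the clean way to realize it is exactly the paper's: pick two generators $y$, $z$, let simplicity hand you $\alpha$, $\beta$ directly, and show the induced maps on $\gr$ are inverse isomorphisms. This avoids the adjacency reduction entirely.
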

\begin{proof}
	Let $\underline{u}, \underline{v} \in \Z^d$. Pick two nonzero elements $y \in M_{\underline{u}}$ and $z \in M_{\underline{v}}$. Since $M$ is a simple $\D$-module, 
	\[M=\D y=\D z.\] 
	Thus $M_{\underline{u}}=\D_{\underline{0}} y$, and $M_{\underline{v}}= \D_{\underline{0}} z$. Let $z= \beta y$ and $y=\alpha z$ for some $\alpha, \beta \in \D$. Clearly, $y=\alpha \beta y$. Note that $\deg \beta=\deg z-\deg y = \underline{v}- \underline{u}$, and $\deg \alpha=\deg y-\deg z = \underline{u}- \underline{v}$. As $\alpha \in \D_{\underline{u}-\underline{v}}= \D_{\underline{0}} x_{\underline{u}-\underline{v}}$ and $ \beta \in \D_{\underline{0}} x_{\underline{v}-\underline{u}}$ so we have $\alpha= s_0x_{\underline{u}-\underline{v}}$ and $\beta= s'_0x_{\underline{v}-\underline{u}}$ for some $s_0, s'_0 \in \D_{\underline{0}}$. Note that for each $\nu \geq 0$,
	\[\mathcal{T}_{\underline{u}, \nu}=\mathcal{F}_{\underline{0}, \nu} y \quad \mbox{ and } \quad \mathcal{T}_{\underline{v}, \nu}=\mathcal{F}_{\underline{0}, \nu} z.\]
Define 
	the map $\mu_{\beta}: M_{\underline{u}} \to M_{\underline{v}}$ by $\xi \mapsto \beta \xi$ and the map $\mu_{\alpha}: M_{\underline{v}}\rightarrow M_{\underline{u}}$ by $\zeta \mapsto \alpha \zeta$. Then for any $\theta \in \D_{\underline{0}}$, 	
	\begin{align*}
	\mu_{\beta}(\theta \xi) =\beta \theta \xi=&s'_0x_{\underline{v}-\underline{u}} \theta \xi\\
	=& s'_0 \theta x_{\underline{v}-\underline{u}} \xi \quad \mbox{by }\eqref{rel-D0}\\
	=& \theta s'_0 x_{\underline{v}-\underline{u}} \xi \quad \mbox{as } \D_{\underline{0}} \mbox{ is a commutative ring and } s'_0, \theta \in \D_{\underline{0}}\\
	=& \theta \mu_{\beta}(\xi).
	\end{align*}
	Hence the map $\mu_{\beta}$ is $\D_{\underline{0}}$-linear. Similarly, one can show that the map $\mu_{\alpha}$ is $\D_{\underline{0}}$-linear. The two maps $\mu_{\alpha}$ and $\mu_{\beta}$ induce the following maps
	\[\mathcal{T}_{\underline{0}, \nu} y \overset{\cdot \beta}{\lrt} \mathcal{T}_{\underline{0}, \nu} z \quad \mbox{ and } \quad \mathcal{T}_{\underline{0}, \nu} z \overset{\cdot \alpha}{\lrt} \mathcal{T}_{\underline{0}, \nu} y\]
for every $\nu \geq 0$. Thus we get the induced commutative diagram of $\gr_{{}^I\FF_{\underline{0},\star}}(\D_{\underline{0}})$-modules
	\[\xymatrix{\gr_{\mathcal{T}_{\underline{u}, \star}}(M_{\underline{u}}) \ar[r]^{\overline{\mu_{\beta}}} \ar[rd]_{\overline{\Id_{M}}} &\gr_{\mathcal{T}_{\underline{v}, \star}}(M_{\underline{v}})\ar[d]^{\overline{\mu_{\alpha}}}\\
		&\gr_{\mathcal{T}_{\underline{u}, \star}}(M_{\underline{u}}).}\]
Clearly, $\overline{\mu_{\alpha}} \circ \overline{\mu_{\beta}}=\overline{\Id_{M}}$. Hence $\gr_{\mathcal{T}_{\underline{u}, \star}}(M_{\underline{u}}) \cong \gr_{\mathcal{T}_{\underline{v}, \star}}(M_{\underline{v}})$ for every pair $\underline{u}, \underline{v} \in \Z^d$. 
The result follows.
\end{proof}

\begin{proposition}\label{ses-dim-mul}
Let $0 \to M_1 \xrightarrow{f_1} M_2 \xrightarrow{f_2} M_3 \to 0$ be a short exact sequence of finitely generated $\Z^d$-graded $\D$-modules. Then $\mathrm{d}_{M_2}(\underline{u})=\max\{\mathrm{d}_{M_1}(\underline{u}), \mathrm{d}_{M_3}(\underline{u})\}$ for all $\underline{u}\in \Z^d$. 

Moreover, 
\[\mathrm{e}\left((M_2)_{\underline{u}}\right)=
\begin{cases}
\mathrm{e}\left((M_3)_{\underline{u}}\right) & \mbox{ if } \mathrm{d}_{M_1}(\underline{u})< \mathrm{d}_{M_2}(\underline{u}),\\
\mathrm{e}\left((M_1)_{\underline{u}}\right) & \mbox{ if } \mathrm{d}_{M_3}(\underline{u})<\mathrm{d}_{M_2}(\underline{u}).
\end{cases}\]
\end{proposition}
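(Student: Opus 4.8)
The plan is to produce, for each fixed $\underline{u}\in\Z^d$, a short exact sequence of finitely generated graded modules over the polynomial ring $\gr_{\FF_{\underline{0},\star}}(\D_{\underline{0}})$, and then read off the Bernstein-type dimension and multiplicity from the behaviour of Hilbert--Samuel functions under a short exact sequence. First I would choose an $\FF$-compatible good filtration $\Gamma=\{\Gamma_\nu\}_{\nu\in\Z}$ on $M_2$ (one exists by \ref{comp_fil}; one may take the filtration $\mathcal{T}$ of \eqref{T} attached to a finite set of multihomogeneous generators). Identifying $M_1$ with $f_1(M_1)\subseteq M_2$, set $\Gamma^1_\nu:=\Gamma_\nu\cap M_1$ and $\Gamma^3_\nu:=f_2(\Gamma_\nu)$. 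Each $\Gamma^1_\nu$ and $\Gamma^3_\nu$ is a finitely generated $\Z^d$-graded $R$-submodule of $M_1$, resp.\ $M_3$, since $R$ is Noetherian, and one checks that $\Gamma^1$ and $\Gamma^3$ are $\FF$-compatible filtrations exhausting $M_1$, resp.\ $M_3$: for $\Gamma^1$ because $\FF_\mu\Gamma^1_\nu\subseteq\FF_\mu\Gamma_\nu\cap M_1\subseteq\Gamma_{\mu+\nu}\cap M_1$, and for $\Gamma^3$ because $f_2$ is $\D$-linear. Since $\gr_{\FF}(\D)$ is Noetherian (see \ref{dim-mul}) and $\gr_{\Gamma}(M_2)$ is a finitely generated $\gr_{\FF}(\D)$-module, its submodule $\gr_{\Gamma^1}(M_1)$ and its quotient $\gr_{\Gamma^3}(M_3)$ are again finitely generated; hence $\Gamma^1$ and $\Gamma^3$ are good filtrations.

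Next I would pass to $\underline{u}$-th graded pieces. Since $f_1,f_2$ are multihomogeneous and the functor $L\mapsto L_{\underline{u}}$ is exact on $\Z^d$-graded modules, for each $\nu$ we obtain a short exact sequence of finitely generated $A$-modules in which $(\Gamma^1_\nu)_{\underline{u}}=(\Gamma_\nu)_{\underline{u}}\cap(M_1)_{\underline{u}}$ is the restriction of the filtration $\Gamma_{\underline{u},\star}$ to the submodule $(M_1)_{\underline{u}}$, and $(\Gamma^3_\nu)_{\underline{u}}$ is the corresponding quotient filtration on $(M_3)_{\underline{u}}$. The standard fact that associated graded modules carry a filtered short exact sequence (equipped with induced sub/quotient filtrations) to a short exact sequence of graded modules then yields an exact sequence of finitely generated graded $\gr_{\FF_{\underline{0},\star}}(\D_{\underline{0}})$-modules
\[0\to\gr_{\Gamma^1_{\underline{u},\star}}((M_1)_{\underline{u}})\to\gr_{\Gamma_{\underline{u},\star}}((M_2)_{\underline{u}})\to\gr_{\Gamma^3_{\underline{u},\star}}((M_3)_{\underline{u}})\to0.\]
By \ref{comp_fil} these three are $\FF_{\underline{0},\star}$-compatible good filtrations on $(M_1)_{\underline{u}}$, $(M_2)_{\underline{u}}$ and $(M_3)_{\underline{u}}$, so by independence of $\mathrm{d}_{M_i}(\underline{u})$ and $\mathrm{e}((M_i)_{\underline{u}})$ from the choice of good filtration (see the discussion after \eqref{multiplicity}) we may compute these invariants from the present filtrations. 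Localizing the displayed sequence at $\m$ (localization is exact) gives a short exact sequence $0\to N_1\to N_2\to N_3\to0$ of finitely generated modules over the Noetherian local ring $\gr_{\FF_{\underline{0},\star}}(\D_{\underline{0}})_{\m}$, with $N_i$ the localization of $\gr_{\Gamma^i_{\underline{u},\star}}((M_i)_{\underline{u}})$, and $\mathrm{d}_{M_i}(\underline{u})=\dim N_i$, $\mathrm{e}((M_i)_{\underline{u}})=\mathrm{e}(N_i)$.

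Finally I would run the usual Hilbert--Samuel argument on $0\to N_1\to N_2\to N_3\to0$. Writing $\chi^{\m}_{N_2}(n)=\chi^{\m}_{N_3}(n)+h(n)$ with $h(n)=\dim_K\bigl(N_1/(N_1\cap\m^{n+1}N_2)\bigr)$, the Artin--Rees lemma provides $c\ge0$ with $\chi^{\m}_{N_1}(n-c)\le h(n)\le\chi^{\m}_{N_1}(n)$ for $n\gg0$, so $h$ is eventually polynomial of degree $\dim N_1$ and shares the leading coefficient $\mathrm{e}(N_1)/(\dim N_1)!$ of $\chi^{\m}_{N_1}$. Comparing degrees gives $\dim N_2=\max\{\dim N_1,\dim N_3\}$, i.e.\ $\mathrm{d}_{M_2}(\underline{u})=\max\{\mathrm{d}_{M_1}(\underline{u}),\mathrm{d}_{M_3}(\underline{u})\}$; comparing leading coefficients gives $\mathrm{e}(N_2)=\mathrm{e}(N_3)$ when $\dim N_1<\dim N_2$ (forcing $\dim N_3=\dim N_2>\deg h$) and $\mathrm{e}(N_2)=\mathrm{e}(N_1)$ when $\dim N_3<\dim N_2$ (forcing $\dim N_1=\dim N_2>\deg\chi^{\m}_{N_3}$), which is precisely the stated multiplicity formula; the degenerate cases where $(M_1)_{\underline{u}}=0$ or $(M_3)_{\underline{u}}=0$ are immediate. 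The only genuinely delicate points lie in the first two paragraphs: that the restriction filtration $\Gamma^1$ stays good — where Noetherianity of $\gr_{\FF}(\D)$ is essential — and that forming the $\underline{u}$-th component commutes with passing to associated graded modules; everything else is bookkeeping with \ref{comp_fil} and \ref{dim-mul}.
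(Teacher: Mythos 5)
Your proposal follows the paper's proof line for line: take a good filtration on $M_2$, restrict/project to get $\FF$-compatible good filtrations on $M_1$ and $M_3$ via Noetherianity of $\gr_{\FF}(\D)$, pass to $\underline{u}$-th components to get a short exact sequence of finitely generated graded $\gr_{\FF_{\underline{0},\star}}(\D_{\underline{0}})$-modules, and conclude. The only difference is that you spell out the final Hilbert--Samuel/Artin--Rees comparison that the paper compresses into ``The statements follow from the definitions,'' and that detail is correct.
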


\begin{proof}
We may assume that $f_1$ is an inclusion and $f_2$ is the quotient map. Let $\mathcal{T}^2$ be a $\Z^d$-graded $\FF$-compatible good filtration on $M_2$ (for instance, one can take $\mathcal{T}^2$ as in \eqref{T}). Then $\mathcal{T}^1=\{\mathcal{T}^2_\nu \cap M_1\}_{\nu \in \Z}$ and $\mathcal{T}^3=\{\frac{\mathcal{T}^2_\nu+M_1}{M_1}\}_{\nu \in \Z}$ are $\Z^d$-graded $\FF$-compatible filtrations on $M_1$ and $M_3 \cong M_2/M_1$, respectively. As $\frac{\mathcal{T}^2_\nu+M_1}{M_1} \cong \frac{\mathcal{T}^2_\nu}{\mathcal{T}^2_\nu\cap M_1}$ so we get a short exact sequence 
\begin{equation}\label{ses_gr}
0 \to \gr_{\mathcal{T}^1}(M_1) \to \gr_{\mathcal{T}^2}(M_2) \to \gr_{\mathcal{T}^3}(M_3)\to 0
\end{equation}
of $\Z^{d}$-graded $\gr_{\mathcal{F}}(\D)$-modules. Since $\gr_{\mathcal{T}^2}(M_2)$ is a finitely generated module over the Noetherian ring $\gr_{\mathcal{F}}(\D)$, it follows that $\gr_{\mathcal{T}^1}(M_1)$ and $\gr_{\mathcal{T}^3}(M_3)$ are finitely generated $\gr_{\mathcal{F}}(\D)$-modules. Besides, \eqref{ses_gr} induces a short exact sequence 
\[0 \to \gr_{\mathcal{T}^1_{\underline{u},*}}\left((M_1)_{\underline{u}}\right) \to \gr_{\mathcal{T}^2_{\underline{u},*}}\left((M_2)_{\underline{u}}\right) \to \gr_{\mathcal{T}^3_{\underline{u},*}}\left((M_3)_{\underline{u}}\right) \to 0\]
of finitely generated graded $\gr_{\mathcal{F}_{\underline{0},*}}(\D_{\underline{0}})$ for every $\underline{u} \in \Z^d$. The statements follow from the definitions (see \ref{dim-mul}). 
\end{proof}

We now relate the Bernstein-type dimension and multiplicity of each component of a finite length $A_{\underline{1}}(\Lambda)$-module to the respective invariants of that component of certain simple $A_{\underline{1}}(\Lambda)$-modules.
\begin{lemma}\label{finLength-dim-mul}
Suppose that $M$ is a finite length $A_{\underline{1}}(\Lambda)$-module. Let $0 \subsetneq N_0 \subsetneq N_1 \subsetneq \cdots \subsetneq N_t=M$ be a filtration of $\Z^d$-graded submodules of $M$ such that $N_i/N_{i-1} \cong L_i$ is simple $\Z^d$-graded. Then for every $\underline{u} \in \Z^d$,	
\[d_M(\underline{u})=\max\{d_{N_i}(\underline{u})\mid i=0, \ldots, t\} \quad \mbox{ and } \quad \ee(M_{\underline{u}})=\sum_{d_{N_j}(\underline{u})=d_M(\underline{u})}\ee\left((N_j)_{\underline{u}}\right).\]
\end{lemma}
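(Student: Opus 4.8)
The plan is to induct on the length $t$ of the given filtration, using Proposition \ref{ses-dim-mul} as the engine and Theorem \ref{simple} to handle the simple quotients. First I would note the base case $t=0$ is trivial (the filtration is just $0 \subsetneq N_0 = M$ with $M = L_0$ simple). For the inductive step, consider the short exact sequence
\[0 \to N_{t-1} \to M \to L_t \to 0\]
of finitely generated $\Z^d$-graded $\D$-modules, where $\D = A_{\underline{1}}(\Lambda)$. Apply Proposition \ref{ses-dim-mul} to get $d_M(\underline{u}) = \max\{d_{N_{t-1}}(\underline{u}), d_{L_t}(\underline{u})\}$ for all $\underline{u} \in \Z^d$. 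By the induction hypothesis applied to the filtration $0 \subsetneq N_0 \subsetneq \cdots \subsetneq N_{t-1}$, we have $d_{N_{t-1}}(\underline{u}) = \max\{d_{N_i}(\underline{u}) \mid i = 0, \ldots, t-1\}$, and since $L_t = N_t/N_{t-1}$, combining these yields the dimension formula $d_M(\underline{u}) = \max\{d_{N_i}(\underline{u}) \mid i = 0, \ldots, t\}$. (One should be slightly careful about whether the statement intends $d_{N_j}$ or $d_{L_j} = d_{N_j/N_{j-1}}$ in the index set; I would match the notation of the statement, treating the $N_j$ as the partial unions, so that $d_{N_j}(\underline u) = \max\{d_{L_i}(\underline u) : i \le j\}$ and the maximum over $j \le t$ of $d_{N_j}$ equals the maximum over $i \le t$ of $d_{L_i}$.)

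For the multiplicity formula, fix $\underline{u} \in \Z^d$ and set $e := d_M(\underline{u})$. There are three cases for the sequence $0 \to N_{t-1} \to M \to L_t \to 0$ in terms of which summand attains the maximal dimension $e$. If $d_{L_t}(\underline{u}) < e$, then by Proposition \ref{ses-dim-mul}, $\ee(M_{\underline{u}}) = \ee((N_{t-1})_{\underline{u}})$, and by induction $\ee((N_{t-1})_{\underline{u}}) = \sum_{d_{N_j}(\underline{u}) = e} \ee((N_j)_{\underline u})$ — here note $d_{N_{t-1}}(\underline u) = e$ in this case, and $L_t$ contributes nothing because $d_{L_t}(\underline u) < e$, so the sum over $j \le t-1$ with $d_{N_j} = e$ is the full desired sum; by the same convention I must be careful that the statement's right-hand side sums $\ee((N_j)_{\underline u})$ and not $\ee((L_j)_{\underline u})$, and reconcile this (most likely the intended statement has $L_j$ in place of $N_j$, since additivity of multiplicity is naturally phrased in terms of the simple subquotients — I would flag and use whichever is consistent with Proposition \ref{ses-dim-mul}). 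If $d_{N_{t-1}}(\underline{u}) < e$, then $d_{L_t}(\underline u) = e$, Proposition \ref{ses-dim-mul} gives $\ee(M_{\underline{u}}) = \ee((L_t)_{\underline{u}})$, and every $N_j$ for $j \le t-1$ has $d_{N_j}(\underline u) \le d_{N_{t-1}}(\underline u) < e$, so the right-hand sum consists of the single term $\ee((L_t)_{\underline u})$, as needed.

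The remaining case $d_{N_{t-1}}(\underline{u}) = d_{L_t}(\underline{u}) = e$ is the one Proposition \ref{ses-dim-mul} does not directly cover, and this is the main obstacle. The fix is standard: for a short exact sequence of finitely generated $\gr_{\FF_{\underline 0,*}}(\D_{\underline 0})$-modules in which all three terms have the same dimension $e$, the multiplicity is additive, $\ee(M_{\underline u}) = \ee((N_{t-1})_{\underline u}) + \ee((L_t)_{\underline u})$. This follows from the additivity of leading coefficients of Hilbert–Samuel polynomials along the short exact sequence $0 \to \gr_{\mathcal{T}^1_{\underline u,*}}((N_{t-1})_{\underline u}) \to \gr_{\mathcal{T}^2_{\underline u,*}}(M_{\underline u}) \to \gr_{\mathcal{T}^3_{\underline u,*}}((L_t)_{\underline u}) \to 0$ constructed in the proof of Proposition \ref{ses-dim-mul}, localized at $\m$: the Hilbert–Samuel function is additive on short exact sequences, so $\chi^{\m}_{M_{\underline u}} = \chi^{\m}_{(N_{t-1})_{\underline u}} + \chi^{\m}_{(L_t)_{\underline u}}$, and since all three have degree $e$, the degree-$e$ coefficients add. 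I would either cite \cite[Lemma 6.2, Chapter 2]{Bjo} (the analogue in the ungraded $D$-module setting, whose proof transfers verbatim) or give the two-line Hilbert-function argument inline. Then, by induction, $\ee((N_{t-1})_{\underline u}) = \sum_{d_{N_j}(\underline u) = e,\ j \le t-1} \ee((N_j)_{\underline u})$, and adding $\ee((L_t)_{\underline u})$ — noting $d_{N_t}(\underline u) = d_M(\underline u) = e$ so the term for $j=t$ is present — completes the formula for all $\underline u \in \Z^d$. $\qed$
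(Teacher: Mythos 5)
Your proof takes essentially the same route as the paper, which simply writes ``The result follows from Proposition \ref{ses-dim-mul}.'' However, you correctly observe something the paper glosses over: the \emph{statement} of Proposition \ref{ses-dim-mul} covers only the two cases $d_{M_1}(\underline{u}) < d_{M_2}(\underline{u})$ and $d_{M_3}(\underline{u}) < d_{M_2}(\underline{u})$, and says nothing about the equal-dimension case $d_{M_1}(\underline{u}) = d_{M_2}(\underline{u}) = d_{M_3}(\underline{u})$, which is exactly where the summation in the lemma becomes a genuine sum with more than one term. You fill this gap by going back to the short exact sequence of associated graded modules constructed in the \emph{proof} of Proposition \ref{ses-dim-mul} and invoking additivity of Hilbert--Samuel functions (so the leading coefficients add when all three degrees agree). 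This is the right fix, and it is a real improvement in rigor over the paper's one-line citation. Your flag about $\ee((N_j)_{\underline{u}})$ versus $\ee((L_j)_{\underline{u}})$ in the summand is also warranted: as written the formula overcounts (e.g.\ for $t=1$ with $d_{N_0} = d_{L_1} = d_M$ one would get $\ee((N_0)_{\underline{u}}) + \ee(M_{\underline{u}})$), so the intended summand is $\ee((L_j)_{\underline{u}})$ over the simple subquotients, matching your inductive bookkeeping.
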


\begin{note*}
The length `$t$' and the set $\mathcal{J}(M)=\{L_1, \ldots, L_t\}$ are unique by the \emph{Jordan-Holder Theorem}.  
Moreover, for each $\underline{u} \in \Z^d$,
\[0 \subsetneq \left(N_0\right)_{\underline{u}} \subset \left(N_1\right)_{\underline{u}}  \subset \cdots \subset \left(N_t\right)_{\underline{u}} =M_{\underline{u}}\]
is a filtration of $M_{\underline{u}}$ as a $\D_{\underline{0}}$-module. By Lemma \ref{cyclic}, nonzero $\left(N_i/N_{i-1}\right)_{\underline{u}}$'s are cyclic.
\end{note*}

\begin{proof}
The result follows from Proposition \ref{ses-dim-mul}.
\end{proof}

From the above lemma, we right away get a primary result of this section.

\begin{theorem}\label{multi-Bdim-mul}
Let $M$ be a $\Z^d$-graded holonomic $\D$-module. Fix a subset $U$ of $\mathcal{S}$. Suppose that $M_{\underline{u}}$ is nonzero for some $\underline{u} \in \mathcal{B}\left(\underline{a}^U\right)$. Then for all $\underline{u} \in \mathcal{B}\left(\underline{a}^U\right)$,
\[\ee(M_{\underline{u}})=\ee(M_{\underline{a}^{U}}) \quad \mbox{ and } \quad \mathrm{d}_M(\underline{u})=\mathrm{d}_M(\underline{a}^{U}).\]
\end{theorem}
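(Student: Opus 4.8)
The plan is to combine the rigidity theorem (Theorem \ref{multi-rigid}) for holonomic generalized Eulerian modules with the behaviour of Bernstein-type dimension and multiplicity under the isomorphisms provided by multiplication by $X_i$ and $\partial_i$. First I would recall that since $M$ is a $\Z^d$-graded holonomic $\D$-module, it has finite length as an $A_{\underline{1}}(\Lambda)$-module (this is the analogue of Lemma \ref{fl_hol} in the present $\Lambda$-coefficient setting, and is used implicitly throughout Section 6), so Lemma \ref{finLength-dim-mul} and Theorem \ref{simple} apply. The key observation from the proof of Theorem \ref{multi-rigid} is that for each fixed $i$, multiplication $M_{\underline{u}+e_i}\xrightarrow{\cdot\partial_i}M_{\underline{u}}$ is an $A$-linear isomorphism whenever $u_i\le -2$, and $M_{\underline{u}-e_i}\xrightarrow{\cdot X_i}M_{\underline{u}}$ is an $A$-linear isomorphism whenever $u_i\ge 1$; iterating these gives $A$-isomorphisms $M_{\underline{u}}\cong M_{\underline{a}^U}$ for all $\underline{u}\in\mathcal{B}(\underline{a}^U)$, via composites of the maps $\cdot X_i$ and $\cdot\partial_i$.

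The second, and more substantive, step is to upgrade these $A$-module isomorphisms to isomorphisms that respect the filtrations, so that the associated graded modules over $\gr_{\FF_{\underline{0},*}}(\D_{\underline{0}})$ are isomorphic — which then forces equality of Bernstein-type dimension and multiplicity by the definitions in \ref{dim-mul}. Here I would invoke the two lemmas just before Theorem \ref{simple}: the maps $\gr_{\mathcal{T}_{\underline{u},\star}}(M_{\underline{u}})\xrightarrow{\cdot X_i}\gr_{\mathcal{T}_{\underline{u}+e_i,\star}}(M_{\underline{u}+e_i})$ and $\gr_{\mathcal{T}_{\underline{u},\star}}(M_{\underline{u}})\xrightarrow{\cdot\partial_i}\gr_{\mathcal{T}_{\underline{u}-e_i,\star}}(M_{\underline{u}-e_i})(e_i)$ are $\gr_{\FF_{\underline{0},\star}}(\D_{\underline{0}})$-linear. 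When $u_i\le -2$ (resp.\ $u_i\ge 1$), Proposition \ref{genEur-partial} (resp.\ Proposition \ref{genEur-XY}) guarantees that $H_j(\partial_i;M)_{\underline{u}}=0$ (resp.\ $H_j(X_i;M)_{\underline{u}}=0$) for the relevant degrees, so that $\cdot\partial_i$ (resp.\ $\cdot X_i$) is an isomorphism of $M$-components; one then needs that this isomorphism is filtered in both directions, i.e.\ that the inverse also respects the filtrations up to a shift, so that the induced map on associated graded modules is an isomorphism of finitely generated $\gr_{\FF_{\underline{0},\star}}(\D_{\underline{0}})$-modules. Composing over the coordinates moved in passing from $\underline{u}$ to $\underline{a}^U$ yields $\gr_{\mathcal{T}_{\underline{u},\star}}(M_{\underline{u}})\cong\gr_{\mathcal{T}_{\underline{a}^U,\star}}(M_{\underline{a}^U})$ as graded modules over the $(m+d)$-variable polynomial ring $\gr_{\FF_{\underline{0},\star}}(\D_{\underline{0}})$ in \eqref{polynomialRing}. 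Since Bernstein-type dimension and multiplicity are read off from the Hilbert–Samuel function of the localization of this associated graded module at $\m$, isomorphic associated graded modules have equal $\mathrm{d}_M$ and $\ee$.

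An alternative, perhaps cleaner, route — and the one I would actually write up — is to reduce to the simple case via Lemma \ref{finLength-dim-mul}. Take a composition series $0\subsetneq N_0\subsetneq\cdots\subsetneq N_t=M$ with simple quotients $L_i$. For each simple $L_i$ and each fixed $U$, Theorem \ref{simple} gives $\ee((L_i)_{\underline{u}})=\ee((L_i)_{\underline{v}})$ and $\mathrm{d}_{L_i}(\underline{u})=\mathrm{d}_{L_i}(\underline{v})$ for any $\underline{u},\underline{v}$ with $(L_i)_{\underline{u}}\ne 0\ne (L_i)_{\underline{v}}$; moreover, by the rigidity Theorem \ref{multi-rigid} applied to each $L_i$ (which is holonomic generalized Eulerian, hence its nonvanishing set is a union of blocks), $(L_i)_{\underline{u}}\ne 0$ for \emph{every} $\underline{u}\in\mathcal{B}(\underline{a}^U)$ as soon as it is nonzero for one such $\underline{u}$, and it is identically zero on the block otherwise. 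Hence, fixing $U$, the set of indices $j$ with $\mathrm{d}_{N_j}(\underline{u})=\mathrm{d}_M(\underline{u})$ and the value $\mathrm{d}_{L_j}(\underline{u})$, $\ee((N_j)_{\underline{u}})$ are all constant as $\underline{u}$ ranges over $\mathcal{B}(\underline{a}^U)$ — using Lemma \ref{finLength-dim-mul} inductively along the filtration together with Proposition \ref{ses-dim-mul}. Substituting into the formulas $d_M(\underline{u})=\max_j d_{N_j}(\underline{u})$ and $\ee(M_{\underline{u}})=\sum_{d_{N_j}(\underline{u})=d_M(\underline{u})}\ee((N_j)_{\underline{u}})$ from Lemma \ref{finLength-dim-mul} gives the conclusion. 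The main obstacle is the bookkeeping in this last step: one must be careful that the additivity formula for $\ee$ only picks up the summands of top dimension, and that the identification of which $L_i$ contribute is stable across the block — this is exactly where the rigidity of each simple subquotient, rather than merely of $M$ itself, is essential.
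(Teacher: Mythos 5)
Your second route is precisely the paper's proof, which reads in full: ``By Lemma \ref{fl_hol}, $M$ has finite length. Therefore, the statement follows from Theorem \ref{simple} and Lemma \ref{finLength-dim-mul}.'' You usefully supply what that terse sentence leaves unsaid: to propagate Lemma \ref{finLength-dim-mul} uniformly across the block $\mathcal{B}(\underline{a}^U)$ one must know that the set of simple subquotients $L_j$ that contribute (i.e., with $(L_j)_{\underline{u}}\neq 0$) is the same for every $\underline{u}$ in the block. That is exactly where the rigidity Theorem \ref{multi-rigid} must be invoked on each $L_j$, and hence where one needs $M$ (and therefore each $L_j$, via Proposition \ref{exact}) to be $\Z^d$-graded generalized Eulerian — a hypothesis not written into the statement of Theorem \ref{multi-Bdim-mul}, which says only ``holonomic.'' Your observation is therefore not merely cosmetic: it flags an implicit assumption without which the argument (and arguably the statement itself) does not go through. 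Your first route — transporting good filtrations across the block via $\cdot X_i$ and $\cdot\partial_i$ and comparing associated graded modules over $\gr_{\FF_{\underline{0},*}}(\D_{\underline{0}})$ — is a genuinely different plan that bypasses the composition series, but the step you correctly flag as delicate, namely that the induced maps on associated graded modules are actually isomorphisms rather than merely $\gr_{\FF_{\underline{0},*}}(\D_{\underline{0}})$-linear injections, is exactly what the paper establishes only for simple modules (via the two-sided cyclic argument in Theorem \ref{simple}, where $\alpha$ and $\beta$ give inverse maps on $\gr$) and then propagates through the finite-length filtration. Proving the filtered isomorphism directly for general finite-length $M$ would require a separate argument about how good filtrations on the components transport under $X_i$ and $\partial_i$, which the paper does not supply; so the route you said you would actually write up is both the paper's own and the one with a complete justification available.
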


\begin{proof}
By Lemma \ref{fl_hol}, $M$ has finite length. Therefore, the statement follows from Theorem \ref{simple} and Lemma \ref{finLength-dim-mul}. 
\end{proof}

\section{A structure theorem of the components}

Suppose that $A$ is a Dedekind domain of characteristic zero such that its localization at every maximal ideal has mixed characteristic with finite residue field. 
Under this framework, in \cite{PutRoy22_pre}, the authors presented a structure theorem for the components of $H^i_I(R)$ for all $i \geq 0$. They further showed that if $A$ is a PID then each component can be written as a direct sum of its torsion part and torsion-free part. We now prove an analogue of the structure theorem in characteristic zero where $A=K[[Y]]$. Observe that unlike the mixed characteristic case, the torsion part does not have any finitely generated summand.

\begin{theorem}\label{struc}
Let $K$ be a field of characteristic zero, $A=K[[Y]]$ be a power series ring in one variable and let $Q(A)$ denote the field of fractions of $A$. Let $R=A[X_1, \ldots, X_d]$ be a standard $\N^d$-graded polynomial ring over $A$.
Let $I \subseteq R$ be a $\mathfrak{C}$-monomial ideal. Fix $\underline{u} \in \Z^d$. Then 
\[H^i_I(R)_{\underline{u}}\cong E(A/(Y))^{s({\underline{u}})} \oplus Q(A)^{v({\underline{u}})} \oplus A^{r({\underline{u}})}\]
for finite numbers $s({\underline{u}}), v({\underline{u}}), r({\underline{u}})$.
\end{theorem}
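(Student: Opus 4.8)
The plan is to study the $A_{\underline{1}}(\Lambda)$-module $M := H^i_I(R)$ (here $\Lambda = A\langle \delta \rangle$ is the ring of $K$-linear differential operators on $A = K[[Y]]$, so $m=1$) and exploit the fact, established in Section~2, that $M$ is a $\Z^d$-graded generalized Eulerian module which, in this setting, is also holonomic. Fix $\underline{u}\in\Z^d$. The first step is to reduce the analysis of $M_{\underline{u}}$ to that of a module over $A$: since $M$ is holonomic, Lemma~\ref{fl_hol} gives that $M$ has finite length as an $A_{\underline{1}}(\Lambda)$-module, so it admits a filtration with simple $\Z^d$-graded quotients, and by Proposition~\ref{exact} together with the exactness of taking $\underline{u}^{th}$ components, it suffices to prove the structure statement for each simple subquotient and then observe the decomposition type is closed under extension. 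Thus I would first prove: if $L$ is a \emph{simple} $\Z^d$-graded holonomic $A_{\underline{1}}(\Lambda)$-module, then $L_{\underline{u}}$ is isomorphic (as an $A$-module) to exactly one of $E(A/(Y))$, $Q(A)$, or $A$ (or is $0$), where $E(A/(Y))$ is the injective hull of the residue field.

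The key step is the classification of simple holonomic $A_{\underline{1}}(\Lambda)$-modules via $\Tot$. By Lemma~\ref{Rel-multigrd-grd-genEur} and Definition~\ref{holo-multi}, $\Tot(L)$ is a holonomic $A_d(\Lambda)$-module, hence (after completing $R_\M$ as in the proof of Lemma~\ref{fl_hol}) is supported on a finite set of multihomogeneous primes; simplicity forces $\Tot(L)$ to be a simple $\mathcal{A}$-module after completion, and the relevant primes are those compatible with the $\mathfrak{C}$-monomial structure, namely primes of the form $(Y) + (\text{monomial ideal in the } X_i)$ and the zero ideal. One then checks case by case which $A$-module appears in a fixed graded component: (i) if $L$ is supported at $(Y)$ together with some of the $X_i$'s, then $L_{\underline{u}}$ is either $0$ or, via the rigidity isomorphism $L_{\underline{u}}\cong L_{\underline{a}^U}$ of Theorem~\ref{multi-rigid}, a $\Lambda$-module supported only at $(Y)$; since $\Lambda = A\langle\delta\rangle$ with $A = K[[Y]]$, simple such modules give $L_{\underline{u}}\cong E(A/(Y))$; (ii) if $L$ is ``torsion-free over $A$'' (zero ideal among the supporting primes but $L_{\underline{u}}$ not $A$-free), one shows $L_{\underline{u}}\otimes_A Q(A)$ controls everything and $L_{\underline{u}}\cong Q(A)$ as in the mixed-characteristic analysis of \cite{PutRoy22_pre}; (iii) the remaining case produces $L_{\underline{u}}\cong A$, which happens e.g. for usual monomial ideals where the Proposition preceding Section~6 already gives free components. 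Here one uses that $A=K[[Y]]$ is a one-dimensional regular local ring, so the only indecomposable modules that can arise from holonomic data in a single component are $A$, $Q(A)$, and $E(A/(Y))$.

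The final step is assembly: given the finite-length filtration $0\subsetneq N_0\subsetneq\cdots\subsetneq N_t = M$ with simple quotients, take $\underline{u}^{th}$ components to get $0\subsetneq (N_0)_{\underline{u}}\subseteq\cdots\subseteq M_{\underline{u}}$ with each successive quotient among $\{0, E(A/(Y)), Q(A), A\}$. Since $E(A/(Y))$ is injective over $A$, any extension by it splits off; since $Q(A)$ is the field of fractions of the DVR $A$, extensions of $Q(A)$ by $Q(A)$ or by $A$ are controlled by $\Ext^1_A$ computations over a DVR and, combined with the generalized Eulerian constraint (each component carries the $\Z^d$-graded structure forcing the relevant $\Ext$'s to vanish or the extension to be inessential), collapse to direct sums; and $A$-free parts add up to $A^{r(\underline{u})}$. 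This yields $H^i_I(R)_{\underline{u}}\cong E(A/(Y))^{s(\underline{u})}\oplus Q(A)^{v(\underline{u})}\oplus A^{r(\underline{u})}$ with the three exponents finite because $t$ is finite. \textbf{The main obstacle} I anticipate is step two: pinning down precisely that no other indecomposable $A$-module (e.g. $A/(Y^k)$ for $k\ge 2$, or higher ``fat point'' modules) can occur as a component of a simple holonomic $A_{\underline{1}}(\Lambda)$-module — this requires carefully tracking the $\Lambda$-module structure on $L_{\underline{u}}$ under the rigidity isomorphisms and using that $L_{\underline{u}}$, being a component of a holonomic module, is itself ``holonomic'' over $\Lambda$ in an appropriate sense, hence has no finite-length-over-$A$ submodules other than those supported at the maximal ideal which are then forced to be copies of $E(A/(Y))$. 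The closure-under-extension argument (splitting) is the second delicate point, but it is essentially the same DVR homological computation carried out in \cite{PutRoy22_pre}.
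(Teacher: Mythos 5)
Your proposal takes a genuinely different route from the paper, but it has two significant gaps that are not merely technicalities. The paper does \emph{not} pass through a Jordan--H\"older filtration of $M=H^i_I(R)$ as a $A_{\underline{1}}(\Lambda)$-module. Instead it works directly with the $A$-module $N=M_{\underline{u}}$: it identifies the torsion part $\Gamma_{(Y)}(N)$ with $\Gamma_{(Y)R}(M)_{\underline{u}}$, applies Theorem~\ref{injdim-dim} to the generalized Eulerian module $\Gamma_{(Y)R}(M)$ to conclude $\injdim \Gamma_{(Y)}(N) \le \dim\supp\Gamma_{(Y)}(N)=0$, so $\Gamma_{(Y)}(N)$ is injective and splits off as $E(A/(Y))^{s(\underline{u})}$; then, inside the torsion-free quotient $\overline N$, it considers $V=\bigcap_{n\ge 1}Y^n\overline N$, shows $V$ is divisible (hence injective over the PID $A$) and torsion-free, so $V\cong Q(A)^{v(\underline{u})}$ splits off; finally it shows $\overline N/V$ is finitely generated via Krull-intersection/Nakayama arguments (using that $(\overline N/V)/Y(\overline N/V)\cong (\overline M/Y\overline M)_{\underline{u}}$ is a finite-dimensional $K$-vector space by holonomicity and Theorem~\ref{hol-mon}) and torsion-free, hence free. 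No classification of simple modules enters at all.

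The two gaps in your approach are concrete. First, your claim that for a simple $\Z^d$-graded holonomic $A_{\underline{1}}(\Lambda)$-module $L$ the component $L_{\underline{u}}$ is isomorphic to \emph{exactly one of} $E(A/(Y))$, $Q(A)$, $A$, or $0$ is both unproved and (as stated, with multiplicity one) unlikely; even granting that the components lie in the additive category generated by these three indecomposables, the case analysis you sketch is not carried out and is the hard part. You yourself flag the need to exclude $A/(Y^k)$-type summands; the paper sidesteps this issue entirely by splitting off the full torsion submodule at once and observing it must be injective, rather than building up from simples. Second, your assembly step is not sound as written: over the complete DVR $A=K[[Y]]$ one has $\Ext^1_A(E(A/(Y)),A)\neq 0$ (apply $\Hom_A(-,A)$ to $0\to A\to Q(A)\to E\to 0$ and use $\Hom_A(Q(A),A)=0$), so a filtration whose graded pieces are $A$'s and $E$'s in the ``wrong'' order need not split on pure $\Ext$-vanishing grounds. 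You gesture at ``the generalized Eulerian constraint forcing the relevant $\Ext$'s to vanish or the extension to be inessential'', but no such mechanism is exhibited. The paper avoids this precisely by splitting off the injective pieces $\Gamma_{(Y)}(N)$ and $V$ first, which makes the extension problems trivial by injectivity of the submodule, not by $\Ext$-vanishing against the quotient. To repair your argument you would either have to reorganize the filtration so that injective pieces always appear as submodules, or prove the needed $\Ext$-vanishing using additional structure---at which point you are essentially reconstructing the paper's direct argument.
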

\begin{proof}
We set $\m=(Y)$ and put $N=H^i_I(R)_{\underline{u}}$. Observe that \[\Gamma_{\m}(N)\cong \Gamma_{\m R}\left(H^i_I(R)\right)_{\underline{u}}.\]
Since $\Gamma_{\m R}\left(H^i_I(R)\right)$ is a $\Z^d$-graded generalized Eulerian $A_{\underline{1}}(A)$-module, by Theorem \ref{injdim-dim},
\[\injdim \Gamma_{\m}(N) \leq \dim \supp \Gamma_{\m}(N)=0.\]
Hence $\Gamma_{\m}(N)$ is an injective module. So
\[\Gamma_{\m}(N) \cong E_A(A/(Y))^{s(\underline{u})}.\] 
Thus by Theorem \ref{injdim-dim}, $s(\underline{u})=\mu_0(\m, N)$ is finite. 

Clearly, $t(N)=\Gamma_{\m}(N)$ and the torsion-free part of $N$ is $\overline{N}:=\frac{N}{\Gamma_\m(N)}$. As $\Gamma_\m\left(\overline{N}\right)=0$ so $Y$ is $\overline{N}$-regular. Consider the short exact sequence
\[0 \to \Gamma_\m(N) \to N \to \overline{N} \to 0.\]
Since $\Gamma_\m(N)$ is an injective module, the above sequence splits. Thus $N \cong \Gamma_\m(N) \oplus \overline{N}$. Consider the $A$-submodule $V:=\bigcap_{n=1}^\infty Y^n \overline{N} \subseteq \overline{N}$. 

By $\Lambda$ we denote the ring of differential operators of $A$, that is, $\Lambda \cong A \langle \partial/\partial Y \rangle$. Since $M$ and $\Gamma_{\m R}(M)$ are $\Z^d$-graded $A_{\underline{1}}(\Lambda)$-modules, $N$ and $\Gamma_\m(N)$ are $\Lambda$-modules. Thus $\overline{N}$ is a $\Lambda$-module. We remark that $V$ is a $\Lambda$-module. It is enough to show $\frac{\partial}{\partial Y}(V) \subseteq V$. Take $v \in V$. Then $v \in Y^{n+1} \overline{N}$ for all $n\geq 1$. Fix $n$ and put $v=Y^{n+1}w$ for some $w \in \overline{N}$. Notice 
\[\frac{\partial}{\partial Y}(v)=\frac{\partial}{\partial Y} \left(Y^{n+1}w\right)=Y^{n+1} \cdot \frac{\partial}{\partial Y}(w)+(n+1)w Y^n \in Y^n \overline{N}.\]
The statement follows.

\begin{claim} \label{V-N}
Take $v \in V$.
Let $v=Y^a w$ for some $w \in \overline{N}$ and $a \geq 1$. Then $w \in V$.
\end{claim}

Recall $V \subseteq Y^n \overline{N}$ for all $n \geq 1$. Fix $n$. 
Let $v=Y^{n+a}w_{n+a}$ for $w_{n+a} \in \overline{N}$, that is, $Y^aw=Y^{n+a}w_{n+a}$. As $Y$ is $\overline{N}$-regular so is $Y^a$. It follows that $w=Y^n w_{n+a} \in Y^n \overline{N}$ for each $n \geq 1$. Hence $w \in V$.

\vspace{0.2cm}
Notice that any $A$-regular element is of the form $Y^a$ for some $a \geq 1$. For any $v \in V$ we have $v=Y^a w$ for some $w \in \overline{N}$ and form Claim \ref{V-N} it follows that $w \in V$. Hence the $A$-module $V$ is divisible. Since $A$ is a PID, by \cite[Corollary 3.1.5]{BruHer}, $V$ is injective. Now $Y$ is $V$-regular, as it is $\overline{N}$-regular. So $V \cong E_{A_{(0)}}(A_{(0)})^{v(\underline{u})}=Q(A)^{v(\underline{u})}$.
%
%
%
%
%
%
Observe that $\overline{N} \otimes_A Q(A) \cong N \otimes_A Q(A)$. Set $W:=Q(A)[X_1, \ldots, X_d]$. Then $R \subseteq W$ is a subring and $M \otimes_R W \cong M \otimes_A Q(A) \cong H^i_{IW}(W)$. Since $IW \subseteq W$ is a monomial ideal, $\dim_K H^i_{IW}(W)_{\underline{u}}$ is finite. Moreover, $V \otimes_{Q(A)} Q(A) \subseteq \overline{N} \otimes_{Q(A)} Q(A)$. So $v(\underline{u})$ is finite.

We put $M :=H^i_I(R)$ and set $\overline{M} :=M/\Gamma_{\m R}(M)$. Clearly, $\overline{M}_{\underline{u}}=\overline{N}$. Besides, $\overline{M}$ is $\Z^d$-graded generalized Eulerian, holonomic $A_{\underline{1}}(A)$-module. From Proposition \ref{Koszul-local} we have $H_0(Y, \overline{M})=\overline{M}/Y \overline{M}$ is a $\Z^d$-graded generalized Eulerian, holonomic $A_{\underline{1}}(K)$-module. So by Theorem \ref{hol-mon}, $\dim_k \left(\overline{M}/Y \overline{M}\right)_{\underline{u}}< \infty$. 
We set
$V:=\bigcap_{j=1}^\infty Y^j \overline{N}$.
Notice that $\dim_k (\overline{N}/V)/Y( \overline{N}/V)< \infty$, since we have $(\overline{N}/V)/Y( \overline{N}/V) \cong \overline{N}/Y \overline{N}=\left(\overline{M}/Y \overline{M}\right)_{\underline{u}}$. Further, $\bigcap_{j=1}^\infty Y^j \left(\overline{N}/V\right)=\left(\bigcap_{j=1}^\infty Y^j\overline{N}\right)/V=0$.
Thus by \cite[Theorem 8.4]{Mat}, it follows that $\overline{N}/V$ is finitely generated. Besides, 
the map $\overline{N}/V \xrightarrow{ \cdot Y} \overline{N}/V$ is injective by Claim \ref{V-N}. Hence the $A$-module $\overline{N}/V$ is torsion-free. Therefore, from the structure theorem for finitely generated modules over the PID $A=K[[Y]]$ we get
\[\overline{N}/V\cong A^{r(\underline{u})}\]
for some finite number $r(\underline{u}) \geq 0$. From the short exact sequence
\begin{equation*}
0 \to V \to \overline{N} \to \overline{N}/V \to 0
\end{equation*}
it follows that
\[\overline{N} \cong V \oplus \overline{N}/V \cong Q(A)^{v(\underline{u})} \oplus A^{r(\underline{u})}.\]
\end{proof}

\begin{remarks}

\noindent	
\begin{enumerate}
\item Since $Q(A)$ is a flat $A$-module, $\overline{N}$ is a flat $A$-module.

\item 
Take a nonempty subset $U$ of $\mathcal{S}$. As $\Gamma_{\m R}\left(H^i_I(R)\right)$ is a $\Z^d$-graded generalized Eulerian $A_{\underline{1}}(A)$-module so from
Theorem \ref{multi-bass} we get $s(\underline{u})=s(\underline{a}_{U})$ for all $\underline{u} \in U$. Furthermore, $\overline{M}$ is a $\Z^d$-graded generalized Eulerian $A_{\underline{1}}(A)$-module and $\overline{N}=\overline{M}_{\underline{u}}$. Hence by \eqref{comp_rel},
\[Q(A)^{v(\underline{u})} \oplus A^{r(\underline{u})} \cong Q(A)^{v(\underline{a}_{U})} \oplus A^{r(\underline{a}_{U})}\] 
for each $\underline{u} \in U$. So rank of these $A$-modules are the same. Therefore, \[v(\underline{u})+r(\underline{u})=v(\underline{a}_{U})+r(\underline{a}_{U}).\] 
By \eqref{comp_rel}, there is an $A$-module isomorphism $\overline{M}_{\underline{u}} \cong \overline{M}_{\underline{a}_U}$ via the map `$f$' (say). 
So we get an induced isomorphism 
\[Y^a\overline{M}_{\underline{u}} \cong f \left(Y^a\overline{M}_{\underline{u}}\right)=Y^af \left(\overline{M}_{\underline{u}}\right)=t^a\overline{M}_{\underline{a}_U}\] for all $\underline{u} \in U$ and each $a \geq 1$. Therefore, $\bigcap_{j=1}^\infty Y^j \overline{M}_{\underline{u}} \cong \bigcap_{j=1}^\infty Y^j \overline{M}_{\underline{a}_U}$ as $A$-modules. It thus follows that $v(\underline{u})=v(\underline{a}_{U})$ and consequently $r(\underline{u})=r(\underline{a}_{U})$. 
\end{enumerate} 
\end{remarks}

\section{Examples}

In this section, we consider $A:=K[[Y]]$. We first discuss an example where $s(\underline{u}) \neq 0$ in Theorem \ref{struc} for some $\underline{u} \in \Z^d$.

\begin{example}\label{eg1}
Let $R=A[X]$. Take the $\mathfrak{C}$-monomial ideal $I=(YX)$ in $R$. Note that $(YX)=(Y) \cap (X)$. 
Consider the Mayer-Vietoris sequence
\[\cdots \to H^1_{(Y, X)}(R) \to H^1_{(Y)}(R) \oplus H^1_{(X)}(R) \to H^1_{(YX)}(R) \to H^2_{(Y, X)}(R) \to 0.\]
As $\hgt (Y, X)=2$ so we have $H^1_{(Y, X)}(R)=0$. Moreover,
\[H^1_{(Y)}(R) \cong H^1_{(Y)}(A)[X]\cong E_A(A/(Y))[X] \quad \mbox{ and } \quad H^1_{(X)}(R) \cong A[X^{-1}](-1).\]
Therefore,
\[H^1_{(YX)}(R)_u \supseteq
\begin{cases}
E_A(A/(Y)) & \mbox{ if } u \geq 0\\
A  & \mbox{ otherwise }.
\end{cases}
\]
So $E_A(A/(Y))$ is a direct summand of $H^1_{(YX)}(R)_u$ and hence of $\Gamma_{(Y)}\left(H^1_{(YX)}(R)_u\right)$.
\end{example}

We now give an example where for some $\underline{u} \in \Z^d$ the torsion-free part of $\overline{M_{\underline{u}}}$ is not finitely generated, equivalently, $v(\underline{u}) \neq 0$ in Theorem \ref{struc}.

\begin{example}
Suppose that $R=A[X_1, X_2]$. Take the $\mathfrak{C}$-monomial ideal $I=(YX_1, X_2)$ in $R$. We set $\overline{R}:=\frac{A}{(Y)}[X_1, X_2]\cong K[X_1,X_2]$. Consider the short exact sequence
\[0 \to R \xrightarrow{\cdot Y} R \to\overline{R} \to 0,\]
which induces a long exact sequence
\[\cdots \to H^1_{(Y)}(\overline{R}) \to H^2_{(YX_1, X_2)}(R) \xrightarrow{\cdot Y} H^2_{(YX_1, X_2)}(R) \to H^2_{(Y)}(\overline{R}) \to \cdots.\]
Observe that $H^2_{(Y)}(\overline{R})=0$ as $\hgt (Y)=1$ and $\dim_K H^1_{(Y)}(\overline{R})_{(u_1, u_2)}$ is finite for each pair ${(u_1, u_2)} \in \Z^2$. We put $M:= H^2_{(YX_1, X_2)}(R)$. Then we have $M_{(u_1, u_2)} \xrightarrow{\cdot Y} M_{(u_1, u_2)} \to 0$ for all ${(u_1, u_2)} \in \Z^2$.  
Moreover, from the commutative diagram
\[
	\xymatrix@C=0.75em@R=0.75em{
		M_{(u_1, u_2)} \ar[r] \ar[d]^{\cdot Y} & \overline{M_{(u_1, u_2)}} \ar[d]^{\cdot Y} \ar[r] & 0\\
		M_{(u_1, u_2)} \ar[r] \ar[d]& \overline{M_{(u_1, u_2)}} \ar[r] & 0\\
		0 & &
	}\]
and using the snake lemma, we get that
$\overline{M_{(u_1, u_2)}} \xrightarrow{\cdot Y} \overline{M_{(u_1, u_2)}} \to 0$ for all ${(u_1, u_2)} \in \Z^2$. By Theorem \ref{struc}, $\overline{M_{(u_1, u_2)}}$ is a torsion-free $A$-module. If it is finitely generated, then it is a free $A$-module. So in that case the surjective map $\overline{M_{(u_1, u_2)}} \xrightarrow{\cdot Y} \overline{M_{(u_1, u_2)}}$ is an isomorphism. This leads to a contradiction whenever $\overline{M_{(u_1, u_2)}}  \neq 0$. 
\end{example}

\end{document}